\numberwithin{equation}{section}
\numberwithin{figure}{section}
\theoremstyle{plain}
\newtheorem{thm}{\protect\theoremname}
  \theoremstyle{plain}
  \newtheorem{lem}[thm]{\protect\lemmaname}
  \theoremstyle{remark}
  \newtheorem{rem}[thm]{\protect\remarkname}
  \theoremstyle{definition}
  \newtheorem{defn}[thm]{\protect\definitionname}
  \providecommand{\definitionname}{Definition}
  \providecommand{\lemmaname}{Lemma}
  \providecommand{\remarkname}{Remark}
\providecommand{\theoremname}{Theorem}
\begin{document}

\title{{\small{}Thick Points of High-Dimensional Gaussian Free Fields}}

\author{Linan Chen}

\selectlanguage{american}%

\address{Department of Mathematics and Statistics, McGill University, 805
Sherbrooke Street West, Montréal, QC, H3A 0B9, Canada. }

\email{Email: lnchen@math.mcgill.ca}

\keywords{\noindent Gaussian free field, polynomial singularity, thick point,
Hausdorff dimension}

\subjclass[2000]{\noindent 60G60, 60G15.}

\thanks{The author would like to thank Daniel Stroock for helpful discussions.
The author is partially supported by NSERC Discovery Grant G241023.}
\selectlanguage{english}%
\begin{abstract}
This work aims to extend the existing results on thick points of logarithmic-correlated
Gaussian Free Fields to Gaussian random fields that are more singular.
To be specific, we adopt a sphere averaging regularization to study
polynomial-correlated Gaussian Free Fields in higher-than-two dimensions.
Under this setting, we introduce the definition of thick points which,
heuristically speaking, are points where the value of the Gaussian
Free Field is unusually large. We then establish a result on the Hausdorff
dimension of the sets containing thick points.
\end{abstract}

\maketitle

\section{Introduction}

Many recent developments in statistical physics and probability theory
have seen Gaussian Free Field (GFF) as an indispensable tool. Heuristically
speaking, GFFs are analogues of Brownian motion with multidimensional
time parameters. Just as Brownian motion is thought of as a natural
interpretation of ``random curve'', GFFs are considered as promising
candidates for modeling ``random surface'' or ``random manifold'',
which ultimately lead to the study of random geometry. Motivated by
their importance, GFFs have been widely studied both in discrete and
continuum settings, and certain geometric properties of GFFs have
been revealed. For example, the distribution of extrema and near-extrema
of two-dimensional log-correlated discrete GFFs are studied by Ding
\emph{et al }\cite{DingZeitouni,DingRoyZeitouni,ChatterjeeDemboDing}.
However, for continuum GFFs, the notion of ``extrema'' is not applicable,
because even in the two-dimensional case a generic element of the
GFF is only a tempered distribution which is not defined point-wisely.
In fact, it is the singularity of GFFs that poses most challenges
in obtaining analytic results on the geometry of GFFs. To overcome
most of the challenges, one needs to apply a procedure%
\footnote{In the literature of physics, such a procedure is called a ``regularization''.%
} to approximate point-wise values of GFFs. One such procedure is to
average GFFs over some sufficiently ``nice'' Borel sets. Even though
it is a tempered distribution, a generic element of a GFF can be integrated
over sufficiently regular submanifolds. Using this idea, the notion
of ``thick point''%
\footnote{The term ``thick point'' is borrowed from the literature of stochastic
analysis. There it refers to the extremes of the occupation measure
of a stochastic process. %
} for continuum GFFs, as the analogue of extrema of discrete GFFs,
is introduced and studied by Hu, Miller and Peres in \cite{HMP}.

More specifically, let $h$ be a generic element of the GFF associated
with the operator $\Delta$ on a bounded domain $D\subseteq\mathbb{R}^{2}$
with the Dirichlet boundary condition. Governed by the properties
of the Green's function of $\Delta$ in 2D, such a GFF is logarithmically
correlated, and it is possible to make sense of the circular average
of $h$:
\[
\bar{h}_{t}\left(z\right):=\frac{1}{2\pi t}\int_{\partial B\left(z,t\right)}h\left(x\right)\sigma\left(dx\right)
\]
where $z\in D$, $\partial B\left(z,t\right)$ is the circle centered
at $z$ with radius $t$ and $\sigma\left(dx\right)$ is the length
measure along the circle. To get an approximation of ``$h\left(z\right)$'',
it is to our interest to study $\bar{h}_{t}\left(z\right)$ as $t\searrow0$.
For every $a\geq0$, the set of $a-$thick points of $h$ are defined
in \cite{HMP} as 
\begin{equation}
T_{h}^{a}:=\left\{ z\in D:\,\lim_{t\searrow0}\,\frac{\bar{h}_{t}\left(z\right)}{\left(-\ln t\right)}=\sqrt{\frac{a}{\pi}}\right\} .\label{eq:2D thick point}
\end{equation}
With $z$ fixed, the circular average process $\left\{ \bar{h}_{t}\left(z\right):\, z\in(0,1]\right\} $
has the same distribution as a Brownian motion $\left\{ B_{\tau}\left(z\right):\,\tau\geq0\right\} $
up to a deterministic time change $\tau=\left(-\ln t\right)/\sqrt{2\pi}$,
and as $t\searrow0$, $\bar{h}_{t}\left(z\right)$ behaves just like
$B_{\tau}\left(z\right)$ as $\tau\nearrow\infty$. Then, for any
given $z\in D$, written in terms of $\left\{ B_{\tau}\left(z\right):\,\tau\geq0\right\} $,
the limit involved in (\ref{eq:2D thick point}) is equivalent to
\[
\lim_{\tau\rightarrow\infty}\,\frac{B_{\tau}\left(z\right)}{\tau}=\sqrt{2a}
\]
which occurs with probability zero for any $a>0$. Therefore, $a-$thick
points, so long as $a>0$, are locations where the field value is
``unusually'' large. The authors of \cite{HMP} prove that for every
$a\in\left[0,2\right]$, $\dim_{\mathcal{H}}\left(T_{h}^{a}\right)=2-a$
a.s., where ``$\dim_{\mathcal{H}}$'' denotes the Hausdorff dimension.
Thick points characterize a basic aspect of the ``landscape'' of
GFFs, that is, where the ``high peaks'' occur, and hence thick points
are of importance to understanding the geometry of GFFs. Besides,
the sets containing thick points also arise naturally as supports
of random measures. For example, the Liouville quantum gravity measure
constructed by Duplantier and Sheffield in \cite{DS1} is supported
on a thick point set. Another such example is multiplicative chaos.
In Kahane's paper \cite{Kah}, it is pointed out that multiplicative
chaos lives on a fractal set, which is essentially a thick point set
in a different context. More recently, the results on the support
of multiplicative chaos are reviewed by Rhodes and Vargas in \cite{RV13}.
Through different approximation procedures, the results in \cite{HMP}
are extended by Cipriani and Hazra to more general log-correlated
GFFs (\cite{CiprianiHazra13,CiprianiHazra14}). It is shown that for
log-correlated GFFs in any dimensions, one can similarly define thick
point sets as in (\ref{eq:2D thick point}) and a result on Hausdorff
dimensions of such sets is in order. However, to the best of the author's
knowledge, there had been no comparable study of thick points for
GFFs that are more singular, e.g., polynomial-correlated GFFs. In
fact, to date little is known about the geometry of such GFFs. Inspired
by the approach presented in \cite{HMP}, this article lays out the
first step of an attempt to explore geometric problems associated
with polynomial-correlated GFFs in any dimensions. 

The main focus of this article is to extend the techniques and the
results on thick points of log-correlated GFFs to polynomial-correlated
GFFs on $\mathbb{R}^{\nu}$ for any $\nu>2$. Intuitively speaking,
compared with the log-correlated counterparts, GFFs that are polynomially
correlated consist of generic elements that are more singular so the
``landscape'' of such a field is ``rougher'', and the higher the
dimension $\nu$ is, the worse it becomes. To make these remarks rigorous
and to bring generality to our approach, we adopt the theory of the
Abstract Wiener Space (\cite{aws}) to interpret general Gaussian
random fields, including GFFs with any degree of polynomial singularity
in any dimensions. Let $\theta$ be a generic element of such a field.
It is always possible, by averaging $\theta$ over codimension-1 spheres
centered at $x\in\mathbb{R}^{\nu}$, to obtain a proper approximation
$\bar{\theta}_{t}\left(x\right)$ which approaches ``$\theta\left(x\right)$''
as $t\searrow0$. We give a careful analysis of the two parameter
Gaussian family 
\[
\left\{ \bar{\theta}_{t}\left(x\right):\, x\in\mathbb{R}^{\nu},t\in(0,1]\right\} 
\]
and use the concentric spherical averages (with $x$ fixed) to define
thick points. It turns out that, instead of the most straightforward
analogue of (\ref{eq:2D thick point}), a more suitable definition
of thick point for the degree-$(\nu-2)$-polynomial-correlated GFF
is that, for $\gamma\geq0$, $x$ is a $\gamma-$thick point of $\theta$
if and only if 
\begin{equation}
\limsup_{t\searrow0}\,\frac{\bar{\theta}_{t}\left(x\right)}{\sqrt{-G\left(t\right)\ln t}}\geq\sqrt{2\nu\gamma}\label{eq:thick point def}
\end{equation}
where $G\left(t\right):=\mathbb{E}\left[\left(\bar{\theta}_{t}\left(x\right)\right)^{2}\right]$.
In a similar spirit as (\ref{eq:2D thick point}), if $\gamma>0$,
then a $\gamma-$thick point is a location where $\theta$ is unusually
large. By adapting the approach presented in \cite{HMP}, we establish
the result (Theorem \ref{thm:hausdorff dimension of thick point set})
that, if $T_{\theta}^{\gamma}$ is the set consisting of all the $\gamma-$thick
points of $\theta$ in the unit cube in $\mathbb{R^{\nu}}$, then
\[
\dim_{\mathcal{H}}\left(T_{\theta}^{\gamma}\right)=\nu\left(1-\gamma\right)\;\mbox{a.s..}
\]
Moreover, we investigate the relation between (\ref{eq:2D thick point})
and (\ref{eq:thick point def}), and show that (Theorem \ref{thm: no perfect thick piont})
due to the higher-order singularity of the polynomial-correlated GFFs,
with probability one, the ``perfect'' $\gamma-$thick point, i.e.,
$x$ such that 
\begin{equation}
\lim_{t\searrow0}\,\frac{\bar{\theta}_{t}\left(x\right)}{\sqrt{-G\left(t\right)\ln t}}=\sqrt{2\nu\gamma},\label{eq:perfect thick point def}
\end{equation}
does not exist, which explains why (\ref{eq:thick point def}) is
more suitable a choice than (\ref{eq:2D thick point}) as the definition
of thick point for GFFs that are polynomially correlated. On the other
hand, if we relax the condition in (\ref{eq:perfect thick point def})
to 
\begin{equation}
\lim_{n\rightarrow\infty}\,\frac{\bar{\theta}_{r_{n}}\left(x\right)}{\sqrt{-G\left(r_{n}\right)\ln r_{n}}}=\sqrt{2\nu\gamma},\label{eq:thick point along sequence}
\end{equation}
where $\left\{ r_{n}:\, n\geq0\right\} $ is any sequence that decays
to zero sufficiently fast, then we find out (Theorem \ref{thm:thick point along sequence})
that, if $ST_{\theta}^{\gamma}$ is the set consisting of all the
points $x$ in the unit cube in $\mathbb{R^{\nu}}$ that satisfies
(\ref{eq:thick point along sequence}), then 
\[
\dim_{\mathcal{H}}\left(ST_{\theta}^{\gamma}\right)=\nu\left(1-\gamma\right)\;\mbox{a.s..}
\]
Some lemmas we obtained during the process are of independent interest. 

In $\mathsection2$ we briefly introduce the theory of the Abstract
Wiener Space as the foundation for the study of GFFs. In $\mathsection3$
we give a detailed study of the Gaussian family consisting of spherical
averages of the GFFs. These are the main tools that will be exploited
in later parts of this article. Our main results are stated in $\mathsection4$
and at the beginning of $\mathsection5$. In particular, the result
on $\dim_{\mathcal{H}}\left(T_{\theta}^{\gamma}\right)$ is proved
by establishing the upper bound and the lower bound separately. The
upper bound is proved in $\mathsection4.1$, and the lower bound is
established in multiple steps in $\mathsection5$.

\section{Abstract Wiener Space and Gaussian Free Fields}

\selectlanguage{american}%
The theory of Abstract Wiener Space (AWS), first introduced by Gross
\cite{aws}, provides an analytical foundation for the construction
and the study of Gaussian measures in infinite dimensions. To be specific,
given a real separable Banach space $E$, a non-degenerate centered
Gaussian measure $\mathcal{W}$ on $E$ is a Borel probability measure
such that for every $x^{*}\in E^{*}\backslash\{0\}$, the functional
$x\in E\mapsto\left\langle x,x^{*}\right\rangle \in\mathbb{R}$ has
non-degenerate centered Gaussian distribution under $\mathcal{W}$,
where $E^{*}$ is the space of bounded linear functionals on $E$,
and $\left\langle \cdot,x^{*}\right\rangle $ is the action of $x^{*}\in E^{*}$
on $E$. Further assume that $H$ is a real separable Hilbert space
which is continuously embedded in $E$ as a dense subspace. Then $E^{*}$
can be continuously and densely embedded into $H$, and for every
$x^{*}\in E^{*}$ there exists a unique $h_{x^{*}}\in H$ such that
$\left\langle h,x^{*}\right\rangle =\left(h,h_{x^{*}}\right)_{H}$
for all $h\in H$. Under this setting if the Gaussian measure $\mathcal{W}$
on $E$ has the following characteristic function: 
\[
\mathbb{E}^{\mathcal{W}}\left[\exp\left(i\left\langle \cdot,x^{*}\right\rangle \right)\right]=\exp\left(-\frac{\left\Vert h_{x^{*}}\right\Vert _{H}^{2}}{2}\right)\mbox{ for every }x^{*}\in E^{*},
\]
then the triple $\left(H,E,\mathcal{W}\right)$ is called an \emph{Abstract
Wiener Space}. Moreover, since $\left\{ h_{x^{*}}:\, x^{*}\in E^{*}\right\} $
is dense in $H$, the mapping 
\[
\mathcal{I}:\, h_{x^{*}}\in H\mapsto\mathcal{I}\left(h_{x^{*}}\right):=\left\langle \cdot,x^{*}\right\rangle \in L^{2}\left(\mathcal{W}\right)
\]
can be uniquely extended to a linear isometry between $H$ and $L^{2}\left(\mathcal{W}\right)$.
The extended isometry, also denoted by $\mathcal{I}$, is the \emph{Paley-Wiener
map} and its images $\left\{ \mathcal{I}\left(h\right):\, h\in H\right\} $,
known as the \emph{Paley-Wiener integrals}, form a centered Gaussian
family whose covariance is given by 
\[
\mathbb{E}^{\mathcal{W}}\left[\mathcal{I}\left(h\right)\mathcal{I}\left(g\right)\right]=\left(h,g\right)_{H}\mbox{ for all }h,g\in H.
\]
Therefore, if $\left\{ h_{n}:n\geq1\right\} $ is an orthonormal basis
of $H$, then $\left\{ \mathcal{I}\left(h_{n}\right):n\geq1\right\} $
is a family of i.i.d. standard Gaussian random variables. In fact,
\begin{equation}
\mbox{ for }\mathcal{W}-\mbox{a.e. }x\in E,\quad x=\sum_{n\geq1}\mathcal{I}\left(h_{n}\right)\left(x\right)h_{n}.\label{eq:H_basis expansion}
\end{equation}

Although $\mathcal{W}$ is a measure on $E$, it is the inner product
of $H$ that determines the covariance structure of $\mathcal{W}$.
$H$ is referred to as the \emph{Cameron-Martin space} of $\left(H,E,\mathcal{W}\right)$.
The theory of AWS says that given any separable Hilbert space $H$,
one can always find $E$ and $\mathcal{W}$ such that the triple $\left(H,E,\mathcal{W}\right)$
forms an AWS. On the other hand, given a separable Banach space $E$,
any non-degenerate centered Gaussian measure $\mathcal{W}$ on $E$
must exist in the form of an AWS. That is to say that, AWS is the
``natural'' format in which any infinite dimensional Gaussian measure
exists. For further discussions on the construction and the properties
of AWS, we refer to \cite{aws}, \cite{awsrevisited}, \cite{add_Gaus}
and $\mathsection8$ of \cite{probability}.

\selectlanguage{english}%
We now apply the general theory of AWS to study Gaussian measures
on function or generalized function spaces. To be specific, given
$\nu\in\mathbb{N}$ and $p\in\mathbb{R}$, consider the Sobolev space
$H^{p}:=H^{p}\left(\mathbb{R}^{\nu}\right)$, which is the closure
of $C_{c}^{\infty}\left(\mathbb{R}^{\nu}\right)$, the space of compactly
supported smooth functions on $\mathbb{R}^{\nu}$, under the inner
product given by, for $\phi,\,\psi\in C_{c}^{\infty}\left(\mathbb{R}^{\nu}\right)$,
\begin{eqnarray*}
\left(\phi,\,\psi\right)_{_{H^{p}}} & := & \left(\left(I-\Delta\right)^{p}\phi,\psi\right)_{L^{2}\left(\mathbb{R}^{\nu}\right)}\\
 & = & \frac{1}{\left(2\pi\right)^{\nu}}\int_{\mathbb{R}^{\nu}}\left(1+\left|\xi\right|^{2}\right)^{p}\hat{\phi}\left(\xi\right)\overline{\hat{\psi}\left(\xi\right)}d\xi,
\end{eqnarray*}
where $\hat{\cdot}$ denotes the Fourier transform. $\left(H^{p},\,\left(\cdot,\cdot\right)_{H^{p}}\right)$
is a separable Hilbert space, and it will be taken as the Cameron-Martin
space for the discussions in this article. As mentioned earlier, there
exists a separable Banach space $\Theta^{p}:=\Theta^{p}\left(\mathbb{R}^{\nu}\right)$
and a Gaussian measure $\mathcal{W}^{p}:=\mathcal{W}^{p}\left(\mathbb{R}^{\nu}\right)$
on $\Theta^{p}$ such that the triple $\left(H^{p},\Theta^{p},\mathcal{W}^{p}\right)$
forms an AWS, to which we refer as the \emph{dim-$\nu$ order-$p$}
\emph{Gaussian Free Field }(GFF) %
\footnote{In physics literature, the term ``GFF'' only refers to the case
when $p=1$. Here we slightly extend the use of this term and continue
to use GFF.%
}.\emph{ }It is clear that the covariance of such a field is determined
by the Green's function of $\left(I-\Delta\right)^{p}$ on $\mathbb{R}^{\nu}$. 

To give explicit formulations for the GFFs introduced in the framework
above, we review the result in \foreignlanguage{american}{\emph{\cite{probability}}}
($\mathsection8.5$) that, when $p=\frac{\nu+1}{2}$, $\Theta^{\frac{\nu+1}{2}}$
can be taken as 
\[
\Theta^{\frac{\nu+1}{2}}:=\left\{ \theta\in C\left(\mathbb{R}^{\nu}\right):\lim_{\left|x\right|\rightarrow\infty}\,\frac{\left|\theta\left(x\right)\right|}{\log\left(e+\left|x\right|\right)}=0\right\} ,
\]
equipped with the norm 
\[
\left\Vert \theta\right\Vert _{\Theta^{\frac{\nu+1}{2}}}:=\sup_{x\in\mathbb{R}^{\nu}}\,\frac{\left|\theta\left(x\right)\right|}{\log\left(e+\left|x\right|\right)}.
\]
In other words, the dim-$\nu$ order-$\frac{\nu+1}{2}$ GFF consists
of continuous functions on $\mathbb{R}^{\nu}$. More generally, for
$p\in\mathbb{R}$, $H^{p}$ is the isometric image of $H^{\frac{\nu+1}{2}}$
under the Bessel-type operator $\left(I-\Delta\right)^{\frac{\nu+1-2p}{4}}$.
Therefore, we can take $\Theta^{p}$ to be the image of $\Theta^{\frac{\nu+1}{2}}$
under $\left(I-\Delta\right)^{\frac{\nu+1-2p}{4}}$ and the corresponding
Gaussian measure is 
\[
\mathcal{W}^{p}=\left(\left(I-\Delta\right)^{-\frac{\nu+1-2p}{4}}\right)_{\star}\mathcal{W}^{\frac{\nu+1}{2}}.
\]
In addition, if we identify $H^{-p}$ as the dual space of $H^{p}$,
then $\left(\Theta^{p}\right)^{*}\subseteq H^{-p}$ and for every
$\lambda\in\left(\Theta^{p}\right)^{*}$, it is easy to see that 
\begin{equation}
\lambda\mapsto h_{\lambda}:=\left(I-\Delta\right)^{-p}\lambda\label{eq:(1-Delta)^(-p)}
\end{equation}
gives the unique element $h_{\lambda}\in H^{p}$ such that the action
of $\lambda\in\left(\Theta^{p}\right)^{*}$, when restricted on $H^{p}$,
coincides with $\left(\cdot,h_{\lambda}\right)_{H^{p}}$. Moreover,
the map (\ref{eq:(1-Delta)^(-p)}) can also be viewed as an isometry
between $H^{-p}$ and $H^{p}$. For $\lambda\in H^{-p}$, we still
use ``$h_{\lambda}$'' to denote the image of $\lambda$ under (\ref{eq:(1-Delta)^(-p)}).
Then the Paley-Wiener integrals $\left\{ \mathcal{I}\left(h_{\lambda}\right):\lambda\in H^{-p}\right\} $
form a centered Gaussian family with the covariance 
\[
\mathbb{E}^{\mathcal{W}^{p}}\left[\mathcal{I}\left(h_{\lambda}\right)\mathcal{I}\left(h_{\eta}\right)\right]=\left(h_{\lambda},h_{\eta}\right)_{H^{p}}=\left(\lambda,\eta\right)_{H^{-p}}\mbox{ for every }\lambda,\eta\in H^{-p}.
\]

It is clear from the discussions above that with the dimension $\nu$
fixed, the larger the order $p$ is, the more regular the elements
of the GFF are; on the other hand, if $p$ is fixed, then the higher
the dimension $\nu$ is, the more singular the GFF becomes. In most
of the cases that are of interest to us, generic elements of GFFs
are only tempered distributions. For example, this is the case with
GFFs that are logarithmically correlated. Interpreted under the framework
introduced above, log-correlated GFFs are dim-$\nu$ order-$(\nu/2)$
GFFs, i.e., with $p=\nu/2$, since the Green's function of $\left(I-\Delta\right)^{\nu/2}$
on $\mathbb{\mathbb{R}^{\nu}}$ has logarithmic singularity along
the diagonal. On the other hand, if $2p\in\mathbb{N}$ and $2p<\nu$,
the Green's function have polynomial singularity with degree $\nu-2p$
and hence the corresponding GFFs are polynomially correlated. In this
article, we focus on studying certain geometric properties of polynomial-correlated
GFFs with%
\footnote{The GFFs with $p$ being half integers (and hence the operator is
non-local) are considered by O. Nadeau-Chamard and the author in a
separate paper which is currently in preparation.%
} $p\in\mathbb{N}$ and $p<\nu/2$.

We finish this section by remarking that instead of using the Bessel-type
operator $\left(I-\Delta\right)^{p}$ to construct GFFs on $\mathbb{R}^{\nu}$,
one can also use the operator $\Delta^{p}$, equipped with proper
boundary conditions, to construct GFFs on bounded domains on $\mathbb{R}^{\nu}$(e.g.,
\cite{HMP}, \cite{DS1} and \cite{Shef}). The field elements obtained
in either way possess similar local properties. However, $\left(I-\Delta\right)^{p}$
rather than $\Delta^{p}$ is a better choice for this project for
the technical reason that $\left(I-\Delta\right)^{p}$ allows the
GFF to be defined on the entire space, and hence we do not have to
specify a boundary condition, which is an advantage at least when
$p>1$.

\section{Spherical Averages of Gaussian Free Fields}

For the rest of this article, we assume that $\nu,p\in\mathbb{N}$,
$\nu>2$ and $1\leq p<\nu/2$, and $\theta$ is a generic element
of the dim-$\nu$ order-$p$ GFF, i.e., $\theta\in\Theta^{p}$ is
sampled under $\mathcal{W}^{p}$. Although ``$\theta\left(x\right)$''
is not defined for every $x\in\mathbb{R}^{\nu}$, we can use the ``average''
of $\theta$ over a sphere centered at $x$ to approximate ``$\theta\left(x\right)$'',
as the radius of the sphere tends to zero. To make this precise, we
need to introduce some notation. Let $B\left(x,t\right)$ and $\partial B\left(x,t\right)$
be the open ball and, respectively, the sphere centered at $x\in\mathbb{R}^{\nu}$
with radius (under the Euclidean metric) $t>0$, $\sigma_{x,t}$ the
surface measure on $\partial B\left(x,t\right)$, $\alpha_{\nu}\left(t\right):=\alpha_{\nu}t^{\nu-1}$
the surface area of $\partial B\left(x,t\right)$ with $\alpha_{\nu}:=2\pi^{\nu/2}/\Gamma\left(\nu/2\right)$,
and $\sigma_{x,t}^{ave}:=\sigma_{x,t}/\alpha_{\nu}\left(t\right)$
the spherical average measure over $\partial B\left(x,t\right)$.
We first state the following simple facts about $\sigma_{x,t}^{ave}$.
It is straightforward to derive these results, so we will omit the
proofs.
\begin{lem}
For every $x\in\mathbb{R}^{\nu}$ and $t>0$, $\sigma_{x,t}^{ave}\in H^{-1}\left(\mathbb{R}^{\nu}\right)$
and its Fourier transform is given by 
\begin{equation}
\forall\xi\in\mathbb{R}^{\nu},\quad\widehat{\sigma_{x,t}^{ave}}\left(\xi\right)=\frac{\left(2\pi\right)^{\frac{\nu}{2}}}{\alpha_{\nu}}\, e^{i\left(x,\xi\right)_{\mathbb{R}^{\nu}}}\cdot\left(t\left|\xi\right|\right)^{\frac{2-\nu}{2}}J_{\frac{\nu-2}{2}}\left(t\left|\xi\right|\right)\label{eq:Fourier transform of spherical average}
\end{equation}
where $J_{\frac{\nu-2}{2}}$ is the standard Bessel function of the
first kind with index $\frac{\nu-2}{2}$.
\end{lem}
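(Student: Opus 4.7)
The plan is to compute $\widehat{\sigma_{x,t}^{ave}}$ explicitly in closed form and then read off $H^{-1}$-membership from the asymptotics of the resulting Bessel expression. Translation invariance of the Fourier transform reduces matters to the centered case: it suffices to show
\[
\widehat{\sigma_{0,t}}(\xi) \;=\; (2\pi)^{\nu/2}\, t^{\nu-1}\, (t|\xi|)^{\frac{2-\nu}{2}}\, J_{\frac{\nu-2}{2}}(t|\xi|),
\]
after which dividing by $\alpha_{\nu}(t)=\alpha_{\nu}t^{\nu-1}$ and reinserting the translation phase $e^{i(x,\xi)_{\mathbb{R}^\nu}}$ yields (\ref{eq:Fourier transform of spherical average}).

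The identity for $\widehat{\sigma_{0,t}}$ is a classical computation. Parametrizing $\partial B(0,t)$ by $y=t\omega$ with $\omega\in S^{\nu-1}$, one has $d\sigma_{0,t}(y)=t^{\nu-1}d\omega$, so the remaining spherical integral reduces to
\[
\int_{S^{\nu-1}} e^{i(\omega,\eta)_{\mathbb{R}^\nu}}\,d\omega \;=\; (2\pi)^{\nu/2}\,|\eta|^{\frac{2-\nu}{2}}\,J_{\frac{\nu-2}{2}}(|\eta|)
\]
evaluated at $\eta=t\xi$. This latter identity I would prove by rotating so that $\eta$ lies along a fixed coordinate axis (using the rotational invariance of the surface measure on $S^{\nu-1}$), foliating $S^{\nu-1}$ by the parallel slices $\{\omega_{1}=\cos\phi\}$ for $\phi\in[0,\pi]$, and recognizing the resulting one-variable integral $\int_{0}^{\pi}e^{i|\eta|\cos\phi}\sin^{\nu-2}\phi\,d\phi$ as Poisson's integral representation of $J_{(\nu-2)/2}$.

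For the Sobolev membership, I would substitute the explicit formula into
\[
\|\sigma_{x,t}^{ave}\|_{H^{-1}}^{2} \;=\; \frac{1}{(2\pi)^{\nu}}\int_{\mathbb{R}^{\nu}}\frac{|\widehat{\sigma_{x,t}^{ave}}(\xi)|^{2}}{1+|\xi|^{2}}\,d\xi
\]
and verify convergence using the standard Bessel asymptotics $J_{\frac{\nu-2}{2}}(z)\sim c_{\nu}\,z^{(\nu-2)/2}$ as $z\to 0$ (so that $z^{2-\nu}J_{(\nu-2)/2}(z)^{2}$ is bounded near the origin) and $|J_{\frac{\nu-2}{2}}(z)|\lesssim z^{-1/2}$ as $z\to\infty$ (so that the whole integrand decays like $|\xi|^{-(\nu+1)}$ at infinity). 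Both bounds give integrable contributions over $\mathbb{R}^{\nu}$, so $\sigma_{x,t}^{ave}\in H^{-1}$. I do not anticipate a serious obstacle here: the Bessel representation of the Fourier transform of a spherical surface measure is entirely classical, and the $H^{-1}$ bound is a short asymptotic check. The only care needed is to align the sign convention in the Fourier transform used in the paper's definition of $H^{p}$ so that the translation phase comes out as $e^{i(x,\xi)_{\mathbb{R}^\nu}}$ rather than $e^{-i(x,\xi)_{\mathbb{R}^\nu}}$.
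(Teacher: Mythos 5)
Your proposal is correct and follows the standard route; the paper in fact omits the proof of this lemma, remarking only that it is straightforward, so there is no alternative paper argument to compare against. The three ingredients you use — translation/scaling to reduce to the unit sphere, the Poisson integral representation to evaluate $\int_{S^{\nu-1}} e^{i\omega\cdot\eta}\,d\omega$, and the small/large argument Bessel asymptotics ($J_{(\nu-2)/2}(z)\sim (z/2)^{(\nu-2)/2}/\Gamma(\nu/2)$ near $0$, $|J_{(\nu-2)/2}(z)|\lesssim z^{-1/2}$ at infinity) to verify the $H^{-1}$ integrability — are exactly the standard ingredients. One small bookkeeping check you may as well record: the sign of the exponent in the Fourier convention is immaterial for the Poisson integral itself, since $\int_0^\pi e^{\pm iz\cos\phi}(\sin\phi)^{\nu-2}\,d\phi$ is the same by the substitution $\phi\mapsto\pi-\phi$; only the translation phase $e^{i(x,\xi)}$ versus $e^{-i(x,\xi)}$ cares about the convention, as you noted.
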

The first assertion of the lemma implies that $\sigma_{x,t}^{ave}\in H^{-p}\left(\mathbb{R}^{\nu}\right)$
for every $p\geq1$. In particular, this fact shows that, no matter
what the dimension is and how singular the GFF is, a codimension-1
sphere is always sufficiently ``nice'' that it is possible to average
the GFF over such a sphere. As a consequence, $\mathcal{I}\left(h_{\sigma_{x,t}^{ave}}\right)$,
viewed as the spherical average of the GFF, is well defined for every
$x\in\mathbb{R}^{\nu}$ and $t>0$ as a Gaussian random variable,
and as $t\searrow0$, from the point of the view of tempered distributions,
$\mathcal{I}\left(h_{\sigma_{x,t}^{ave}}\right)\left(\theta\right)$
approximates ``$\theta\left(x\right)$''. With the help of (\ref{eq:Fourier transform of spherical average}),
we can compute, by Parseval's identity, the covariance of the Gaussian
family consisting of all the spherical averages and express the covariance
as follows. 
\begin{lem}
$\left\{ \mathcal{I}\left(h_{\sigma_{x,t}^{ave}}\right):\, x\in\mathbb{R}^{\nu},\, t>0\right\} $
is a two-parameter centered Gaussian family under $\mathcal{W}^{p}$,
and the covariance is given by, for $x,y\in\mathbb{R}^{\nu}$ and
$t,s>0$, 
\begin{eqnarray}
 &  & \begin{split} & \mathbb{E}^{\mathcal{W}^{p}}\left[\mathcal{I}\left(h_{\sigma_{x,t}^{ave}}\right)\mathcal{I}\left(h_{\sigma_{y,s}^{ave}}\right)\right]\\
 & \qquad=\frac{\left(2\pi\right)^{\nu/2}}{\alpha_{\nu}^{2}\left(ts\left|x-y\right|\right)^{\frac{\nu-2}{2}}}\int_{0}^{\infty}\frac{\tau^{2-\frac{\nu}{2}}J_{\frac{\nu-2}{2}}\left(t\tau\right)J_{\frac{\nu-2}{2}}\left(s\tau\right)J_{\frac{\nu-2}{2}}\left(\left|x-y\right|\tau\right)}{\left(1+\tau^{2}\right)^{p}}d\tau.
\end{split}
\label{eq:covariance for (1-Delta)^s-1}
\end{eqnarray}
In particular, when $x=y$, i.e., in the case of concentric spherical
averages,
\begin{equation}
\mathbb{E}^{\mathcal{W}^{p}}\left[\mathcal{I}\left(h_{\sigma_{x,t}^{ave}}\right)\mathcal{I}\left(h_{\sigma_{x,s}^{ave}}\right)\right]=\frac{1}{\alpha_{\nu}\left(ts\right)^{\frac{\nu-2}{2}}}\int_{0}^{\infty}\frac{\tau J_{\frac{\nu-2}{2}}\left(t\tau\right)J_{\frac{\nu-2}{2}}\left(s\tau\right)}{\left(1+\tau^{2}\right)^{p}}d\tau.\label{eq:covariance for (1-Delta)^s concentric}
\end{equation}

\end{lem}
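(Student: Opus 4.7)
The plan is to reduce everything to an inner-product computation in $H^{-p}$ and then evaluate that inner product in the Fourier domain. Gaussianity of the two-parameter family is essentially automatic: every finite linear combination of the Paley--Wiener integrals $\mathcal{I}(h_{\sigma_{x,t}^{ave}})$ lies in the image of the Paley--Wiener map, which was shown in $\mathsection2$ to consist of centered Gaussian random variables. Combining the isometry property of $\mathcal{I}$ with the identification $(h_{\lambda},h_{\eta})_{H^{p}}=(\lambda,\eta)_{H^{-p}}$ coming from \eqref{eq:(1-Delta)^(-p)}, I get
\[
\mathbb{E}^{\mathcal{W}^{p}}\!\left[\mathcal{I}(h_{\sigma_{x,t}^{ave}})\,\mathcal{I}(h_{\sigma_{y,s}^{ave}})\right] = \left(\sigma_{x,t}^{ave},\sigma_{y,s}^{ave}\right)_{H^{-p}},
\]
so the task is to compute this $H^{-p}$-inner product explicitly. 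Since the previous lemma gives $\sigma_{x,t}^{ave}\in H^{-1}\subseteq H^{-p}$, this pairing is well-defined, and by Parseval's identity in the $H^{-p}$-norm,
\[
\left(\sigma_{x,t}^{ave},\sigma_{y,s}^{ave}\right)_{H^{-p}} = \frac{1}{(2\pi)^{\nu}}\int_{\mathbb{R}^{\nu}}\frac{\widehat{\sigma_{x,t}^{ave}}(\xi)\,\overline{\widehat{\sigma_{y,s}^{ave}}(\xi)}}{(1+|\xi|^{2})^{p}}\,d\xi.
\]

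The next step is to substitute the explicit Fourier transform \eqref{eq:Fourier transform of spherical average} from the previous lemma and pass to polar coordinates $\xi=\tau\omega$ with $\tau\geq 0$ and $\omega\in S^{\nu-1}$, so that $d\xi=\tau^{\nu-1}\,d\tau\,d\omega$. The product of the two Fourier transforms contributes the prefactor $(2\pi)^{\nu}/\alpha_{\nu}^{2}$, a plane-wave factor $e^{i\tau(x-y,\omega)_{\mathbb{R}^\nu}}$, and two Bessel factors $(t\tau)^{(2-\nu)/2}J_{(\nu-2)/2}(t\tau)$ and $(s\tau)^{(2-\nu)/2}J_{(\nu-2)/2}(s\tau)$. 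The radial and angular variables then decouple, so the $d\omega$ integral can be isolated and evaluated by the classical plane-wave (Funk--Hecke) identity
\[
\int_{S^{\nu-1}}e^{i\tau(z,\omega)_{\mathbb{R}^\nu}}\,d\omega = (2\pi)^{\nu/2}(\tau|z|)^{(2-\nu)/2}J_{(\nu-2)/2}(\tau|z|)
\]
applied with $z=x-y$. This produces the third Bessel factor and a factor $|x-y|^{(2-\nu)/2}$.

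What is left is a routine bookkeeping of powers of $\tau$: three factors of $\tau^{(2-\nu)/2}$ from the Bessel prefactors combine with $\tau^{\nu-1}$ from the Jacobian to give a net $\tau^{2-\nu/2}$, and the constants collapse via $(2\pi)^{\nu}/(2\pi)^{\nu}\cdot(2\pi)^{\nu/2}/\alpha_{\nu}^{2}$ to the prefactor claimed in \eqref{eq:covariance for (1-Delta)^s-1}. For the concentric case $x=y$ in \eqref{eq:covariance for (1-Delta)^s concentric} the plane-wave identity is not needed: the angular integrand is identically $1$, the $d\omega$ integration contributes the area $\alpha_{\nu}$ of $S^{\nu-1}$, the power count becomes $\tau^{2-\nu}\cdot\tau^{\nu-1}=\tau$, and the constants reduce to $1/\alpha_{\nu}$.

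I do not anticipate any genuine obstacle. The only subtle point is justifying the use of Parseval's identity and the interchange of the radial and angular integrations, both of which follow from $\sigma_{x,t}^{ave}\in H^{-p}$ (so the integrand in the weighted Parseval formula is in $L^{1}$) combined with Fubini's theorem. Everything else is Fourier bookkeeping and an application of a standard special-function identity.
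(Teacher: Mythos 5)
Your proof is correct and follows essentially the approach the paper alludes to: the paper omits the details, pointing to ``integral representations of Bessel functions'' in Watson's treatise, and your plane-wave identity is exactly that representation (which is equivalent to re-reading the Fourier transform formula \eqref{eq:Fourier transform of spherical average} for the unit sphere at the argument $\tau(x-y)$). The reduction to the $H^{-p}$ inner product via the Paley--Wiener isometry and the weighted Parseval identity is the natural route, and the bookkeeping of constants and powers of $\tau$ checks out in both the general and the concentric cases.
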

Again, these results follow easily from integral representations of
Bessel functions (\cite{BesselFunctions}, $\mathsection3.3$) combined
with straightforward computations. Proofs are omitted.

To study the distribution of the family of spherical averages, and
to use them effectively to approximate ``pointwise values'' of the
GFF, it is useful to obtain as explicit an expression for the covariance
as possible. To this end, we will first assume $p=1$ and treat the
concentric spherical averages ($\mathsection3.1$) and the non-concentric
ones ($\mathsection3.2$) separately. During this process, we find
for each $x\in\mathbb{R}^{\nu}$ a set of ``renormalized spherical
averages'' which still approximates ``$\theta\left(x\right)$''
but whose covariance has technically desirable properties. In $\mathsection3.3$
we briefly explain the strategy for treating the spherical averages
when $p>1$.

\subsection{When $p=1$. Concentric Spherical Averages.}

For the rest of this article, when $p=1$, we simply write $\left(H^{p},\Theta^{p},\mathcal{\mathcal{W}}^{p}\right)$
as $\left(H,\Theta,\mathcal{\mathcal{W}}\right)$. It is clear from
(\ref{eq:covariance for (1-Delta)^s concentric}) that the distribution
of the concentric spherical averages $\left\{ \mathcal{I}\left(h_{\sigma_{x,t}^{ave}}\right):\, t>0\right\} $
at any given $x\in\mathbb{R}^{\nu}$ is independent of $x$. In fact,
the distribution of the GFF is translation invariant. First we state
a closed formula for the integral in (\ref{eq:covariance for (1-Delta)^s concentric}). 
\begin{lem}
\label{lem:covariance C^1(t,s)} Fix any $x\in\mathbb{R}^{\nu}$.
For every $t,s>0$, 
\begin{equation}
\mathbb{E}^{\mathcal{W}}\left[\mathcal{I}\left(h_{\sigma_{x,t}^{ave}}\right)\mathcal{I}\left(h_{\sigma_{x,s}^{ave}}\right)\right]=\frac{1}{\alpha_{\nu}\left(ts\right)^{\frac{\nu-2}{2}}}I_{\frac{\nu-2}{2}}\left(t\wedge s\right)K_{\frac{\nu-2}{2}}\left(t\vee s\right),\label{eq:concentric cov before renorm}
\end{equation}
where $I_{\frac{\nu-2}{2}}$ and $K_{\frac{\nu-2}{2}}$ are the modified
Bessel functions (with pure imaginary argument) with the index $\frac{\nu-2}{2}$.
\end{lem}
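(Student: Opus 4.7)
The plan is to reduce the identity to a classical Bessel-function integral and verify it via an ODE argument. Setting $p=1$ in the previous lemma and writing $\mu:=\tfrac{\nu-2}{2}$ for brevity, the formula (\ref{eq:covariance for (1-Delta)^s concentric}) with $x=y$ reduces the claim to the identity
\[
\int_0^\infty \frac{\tau\,J_\mu(t\tau)\,J_\mu(s\tau)}{1+\tau^2}\,d\tau \;=\; I_\mu(t\wedge s)\,K_\mu(t\vee s),
\]
valid for $\mu>-1$ and $t,s>0$. This is a standard Lipschitz--Hankel--Sonine integral, recorded in Watson \cite{BesselFunctions}, so the simplest route is to cite it and then track constants in (\ref{eq:covariance for (1-Delta)^s concentric}).

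For a self-contained derivation, my preferred approach is as follows. Fix $s>0$, denote by $F(t)$ the left-hand integral, and by $G(t):=I_\mu(t\wedge s)K_\mu(t\vee s)$ the right-hand side. The Bessel equation for $J_\mu$ gives
\[
\mathcal{L}_\mu J_\mu(t\tau) = -t^2(1+\tau^2)\,J_\mu(t\tau), \qquad \mathcal{L}_\mu := t^2\partial_t^2 + t\partial_t - (t^2+\mu^2),
\]
so applying $\mathcal{L}_\mu$ under the integral sign and using the Hankel orthogonality relation $\int_0^\infty \tau\,J_\mu(t\tau)\,J_\mu(s\tau)\,d\tau = s^{-1}\delta(t-s)$ yields $\mathcal{L}_\mu F = -s\,\delta(t-s)$ in the distributional sense. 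A direct computation shows that $G$ satisfies the same distributional equation: on $(0,s)$ and on $(s,\infty)$ it is annihilated by $\mathcal{L}_\mu$ (being proportional to $I_\mu$ and $K_\mu$ respectively, which solve the modified Bessel equation of index $\mu$), it is continuous at $t=s$, and its derivative jumps there by $I_\mu(s)K_\mu'(s)-I_\mu'(s)K_\mu(s) = -1/s$ (the Wronskian identity), which contributes $-s\,\delta(t-s)$ after multiplication by $t^2$. Matching boundary conditions---$F(t)\sim t^\mu$ as $t\searrow 0$, so $F$ lies in the $I_\mu$-branch at the origin, while $F(t)\to 0$ as $t\to\infty$, so $F$ lies in the $K_\mu$-branch at infinity---and invoking uniqueness of the distributional solution then forces $F=G$.

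The main obstacle is essentially constant bookkeeping rather than substance: verifying that the factor $\alpha_\nu$, the $(2\pi)^{\nu/2}$ coming from the Fourier transform of $\sigma_{x,t}^{ave}$ in (\ref{eq:Fourier transform of spherical average}), and the Bessel normalizations combine to give exactly $1/(\alpha_\nu(ts)^\mu)$ as the prefactor in (\ref{eq:concentric cov before renorm}), with no stray multiplicative constants. As a sanity check one may note that $(2\pi)^{-\nu/2}|x-y|^{-\mu}K_\mu(|x-y|)$ is the Green's function of $I-\Delta$ on $\mathbb{R}^\nu$, which is consistent with the $K_\mu$-dominated behavior of (\ref{eq:concentric cov before renorm}) in the radial variable $t\vee s$ and offers an alternative derivation via double integration of this Green's function against the two concentric spherical measures.
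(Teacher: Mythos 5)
Your first route — citing the Lipschitz--Hankel--Sonine integral from Watson \cite{BesselFunctions} and then tracking the constants in (\ref{eq:covariance for (1-Delta)^s concentric}) — is exactly what the paper does: the authors explicitly state they omit the proof and refer to \cite{BesselFunctions} ($\mathsection$13.53) and the appendix of \cite{CJ}. Your constant bookkeeping is consistent: once $(2\pi)^{\nu/2}$ has already been absorbed into (\ref{eq:covariance for (1-Delta)^s concentric}), the identity $\int_0^\infty \tau J_\mu(t\tau)J_\mu(s\tau)(1+\tau^2)^{-1}\,d\tau = I_\mu(t\wedge s)K_\mu(t\vee s)$ with $\mu=(\nu-2)/2$ contributes no extra factor, giving the stated prefactor $1/(\alpha_\nu(ts)^\mu)$ directly.

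Your self-contained ODE derivation is a genuinely different route that the paper does not provide, and I have checked that the computations go through. The action of $\mathcal{L}_\mu$ on $J_\mu(t\tau)$ is correct, the Wronskian $I_\mu(s)K_\mu'(s)-I_\mu'(s)K_\mu(s)=-1/s$ produces the jump $-s\,\delta(t-s)$ after multiplying by $t^2$, and the boundary-condition matching (regular $I_\mu$-type behavior at $0$, decaying $K_\mu$-type behavior at $\infty$) pins down the solution uniquely since $I_\mu$ and $K_\mu$ each blow up at the opposite end. The one point where the sketch trades rigor for brevity is the step ``applying $\mathcal{L}_\mu$ under the integral sign and using the Hankel orthogonality relation'': the Hankel completeness relation $\int_0^\infty \tau J_\mu(t\tau)J_\mu(s\tau)\,d\tau = s^{-1}\delta(t-s)$ is a distributional identity, and the intermediate integrals obtained by differentiating under the sign are only conditionally convergent. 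A fully rigorous version would route the argument through the Hankel--Plancherel theorem or a mollification, but this does not affect the correctness of the conclusion. Your closing remark that $(2\pi)^{-\nu/2}|x-y|^{-\mu}K_\mu(|x-y|)$ is the Green's function of $I-\Delta$ on $\mathbb{R}^\nu$, and that integrating it against the two concentric spherical measures gives a third derivation, is also correct and in fact is essentially the route taken in the appendix of \cite{CJ} that the paper cites as its alternative reference.
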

One can use a formula in \cite{BesselFunctions} ($\mathsection$13.53)
to derive (\ref{eq:concentric cov before renorm}) directly. An alternative
proof was provided in the Appendix of \cite{CJ}. So we will omit
the proof of Lemma \ref{lem:covariance C^1(t,s)} and refer to \cite{BesselFunctions}
and \cite{CJ} for details. 

By (\ref{eq:concentric cov before renorm}), $\left\{ \mathcal{I}\left(h_{\sigma_{x,t}^{ave}}\right):\, t>0\right\} $
is a backward%
\footnote{A ``backward'' Markov process is a process which exhibits Markov
property as the parameter ``$t$'' decreases. %
} Markov Gaussian process. In fact, (\ref{eq:concentric cov before renorm})
leads to a renormalization of the spherical averages, i.e., 
\[
\bar{\sigma}_{x,t}:=\frac{\left(t/2\right)^{\frac{\nu-2}{2}}}{\Gamma\left(\nu/2\right)\cdot I_{\frac{\nu-2}{2}}\left(t\right)}\cdot\sigma_{x,t}^{ave}.
\]
Denote by $\bar{\theta}_{t}\left(x\right)$ the corresponding Paley-Wiener
integral $\mathcal{I}\left(h_{\bar{\sigma}_{x,t}}\right)\left(\theta\right)$.
Because 
\[
\lim_{t\rightarrow0}\frac{\left(t/2\right)^{\frac{\nu-2}{2}}}{\Gamma\left(\nu/2\right)\cdot I_{\frac{\nu-2}{2}}\left(t\right)}=1,
\]
$\bar{\theta}_{t}\left(x\right)$ still is a legitimate approximation
of ``$\theta\left(x\right)$'' as $t\searrow0$. It follows from
(\ref{eq:concentric cov before renorm}) that the covariance of the
Gaussian process $\left\{ \bar{\theta}_{t}\left(x\right):\, t>0\right\} $
is given by, for $0<s\leq t$,
\begin{equation}
\mathbb{E}^{\mathcal{W}}\left[\bar{\theta}_{t}\left(x\right)\bar{\theta}_{s}\left(x\right)\right]=\frac{\alpha_{\nu}}{\left(2\pi\right)^{\nu}}\cdot\frac{K_{\frac{\nu-2}{2}}\left(t\right)}{I_{\frac{\nu-2}{2}}\left(t\right)}=:G\left(t\right).\label{eq:concentric cov renorm}
\end{equation}
The function $G$ defined above is positive and decreasing on $\left(0,\infty\right)$,
and when $t$ is sufficiently small, $G\left(t\right)=\mathcal{O}\left(t^{2-\nu}\right)$,
which reflects the fact that the dim-$\nu$ order-1 GFF is polynomially
correlated with degree $\nu-2$. 
\begin{rem}
Since we are only concerned about $\bar{\theta}_{t}\left(x\right)$
when $t$ is small, without loss of generality, we assume that $t\in(0,1]$.
As a consequence of (\ref{eq:concentric cov renorm}), $\left\{ \bar{\theta}_{t}\left(x\right):\, t\in(0,1]\right\} $
is a Gaussian process with independent increment (in the direction
of $t$ decreasing), which, up to a time change, has the same distribution
as a Brownian motion. To be specific, if we define a ``clock'' by
\[
\tau:=G\left(t\right)-G\left(1\right)\mbox{ for }t\in(0,1],
\]
then 
\[
\left\{ B_{\tau}:=\bar{\theta}_{G^{-1}\left(\tau+G\left(1\right)\right)}\left(x\right)-\bar{\theta}_{1}\left(x\right):\:\tau\geq0\right\} 
\]
has the same distribution as a standard Brownian motion. 
\end{rem}
Based on the preceding observations, results about the Brownian motion
can be transported directly to $\left\{ \bar{\theta}_{t}\left(x\right):\, t\in(0,1]\right\} $,
and the behavior of $\bar{\theta}_{t}\left(x\right)$ when $t$ is
small resembles that of the Brownian motion $B_{\tau}$ when $\tau$
is large. For example, by the law of the iterated logarithm, 
\begin{equation}
\limsup_{t\searrow0}\frac{\left|\bar{\theta}_{t}\left(x\right)\right|}{\sqrt{2G\left(t\right)\cdot\ln\ln G\left(t\right)}}=1\mbox{ a.s.}.\label{eq:LIL for concentric}
\end{equation}

\subsection{When $p=1$. Non-concentric Spherical Averages.}

We now move on to the family of non-concentric spherical averages.
Again, instead of the regular spherical averages, we adopt the renormalized
spherical averages introduced in $\mathsection3.1$. Consider the
two-parameter Gaussian family 
\[
\left\{ \bar{\theta}_{t}\left(x\right):\, x\in\mathbb{R}^{\nu},\, t\in(0,1]\right\} ,
\]
and denote by $\mbox{Cov}\left(x,t;\, y,s\right)$ the covariance
of $\bar{\theta}_{t}\left(x\right)$ and $\bar{\theta}_{s}\left(y\right)$
for $x,y\in\mathbb{R}^{\nu}$ and $t,s\in(0,1]$. One can compute
$\mbox{Cov}\left(x,t;\, y,s\right)$ using (\ref{eq:covariance for (1-Delta)^s-1})
and the renormalization. In fact, under certain circumstances, it
is possible to obtain explicit formulas for $\mbox{Cov}\left(x,t;\, y,s\right)$. 
\begin{lem}
\label{lem:asymptotics of the cov functions}Let $x,y\in\mathbb{R}^{\nu}$and
$t,s\in(0,1]$.\\
(i) If $\left|x-y\right|\geq t+s$, i.e., if $B\left(x,t\right)\cap B\left(y,s\right)=\emptyset$,
\begin{equation}
\mbox{Cov}\left(x,t;\, y,s\right)=\left(2\pi\right)^{-\nu/2}\cdot\frac{K_{\frac{\nu-2}{2}}\left(\left|x-y\right|\right)}{\left|x-y\right|^{\frac{\nu-2}{2}}}=:C_{disj}\left(\left|x-y\right|\right),\label{eq:cov non-concentric non-overlapping}
\end{equation}
In particular, $C_{disj}\left(\left|x-y\right|\right)=\mathcal{O}\left(\left|x-y\right|^{2-\nu}\right)$
when $\left|x-y\right|$ is small.\\
(ii) If $t\geq\left|x-y\right|+s$, i.e., if $B\left(x,t\right)\supset B\left(y,s\right)$,
\begin{equation}
\mbox{Cov}\left(x,t;\, y,s\right)=\left(2\pi\right)^{-\nu/2}\cdot\frac{I_{\frac{\nu-2}{2}}\left(\left|x-y\right|\right)}{\left|x-y\right|^{\frac{\nu-2}{2}}}\cdot\frac{K_{\frac{\nu-2}{2}}\left(t\right)}{I_{\frac{\nu-2}{2}}\left(t\right)}=:C_{incl}\left(t,\left|x-y\right|\right),\label{eq:cov non-concentric inclusion}
\end{equation}
In particular, $C_{incl}\left(t,\left|x-y\right|\right)=\mathcal{O}\left(t^{2-\nu}\right)$
when $t$ is small. 
\end{lem}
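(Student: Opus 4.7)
The plan is to reinterpret the covariance as a double integral of the Green's function of $I-\Delta$ against the two sphere measures, and then collapse it using the mean value property for $(I-\Delta)$-harmonic functions. When $p=1$, the identification $h_\lambda=(I-\Delta)^{-1}\lambda$ together with the Paley--Wiener isometry yields, with $\mu:=(\nu-2)/2$ and $G_1(u,v):=(2\pi)^{-\nu/2}K_\mu(|u-v|)/|u-v|^\mu$ the Green's function of $I-\Delta$ on $\mathbb{R}^\nu$,
\begin{equation*}
\mathbb{E}^{\mathcal{W}}\bigl[\mathcal{I}(h_{\sigma_{x,t}^{ave}})\,\mathcal{I}(h_{\sigma_{y,s}^{ave}})\bigr]=\iint G_1(u,v)\,\sigma_{x,t}^{ave}(du)\,\sigma_{y,s}^{ave}(dv).
\end{equation*}
Writing $c_\nu(r):=(r/2)^\mu/[\Gamma(\nu/2)I_\mu(r)]$ for the renormalization factor from Section~3.1, I then have $\mathrm{Cov}(x,t;y,s)=c_\nu(t)c_\nu(s)\iint G_1\,d\sigma_{x,t}^{ave}\,d\sigma_{y,s}^{ave}$. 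The auxiliary tool I will use is the following mean value formula: since the only spherically symmetric solution of $(I-\Delta)u=0$ that is regular at $z$ is a multiple of $|\cdot-z|^{-\mu}I_\mu(|\cdot-z|)$, any $f$ with $(I-\Delta)f=0$ on $B(z,r)$ satisfies $\int_{\partial B(z,r)}f\,d\sigma_{z,r}^{ave}=f(z)/c_\nu(r)$.

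For part (i), the hypothesis $|x-y|\geq t+s$ makes $B(x,t)$ and $B(y,s)$ disjoint. For each $v\in\bar{B}(y,s)$ the function $G_1(\cdot,v)$ is $(I-\Delta)$-harmonic on $B(x,t)$, so the inner integral collapses to $G_1(x,v)/c_\nu(t)$. A second application of the mean value formula (using $x\notin\bar{B}(y,s)$) reduces the outer integral to $G_1(x,y)/c_\nu(s)$, and the renormalization factors cancel exactly, giving $\mathrm{Cov}(x,t;y,s)=G_1(x,y)=C_{disj}(|x-y|)$. The asymptotic $C_{disj}(|x-y|)=\mathcal{O}(|x-y|^{2-\nu})$ as $|x-y|\to 0^+$ is immediate from $K_\mu(r)\sim\Gamma(\mu)2^{\mu-1}r^{-\mu}$ as $r\to 0^+$.

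For part (ii), the hypothesis $t\geq|x-y|+s$ forces $B(y,s)\subset\bar{B}(x,t)$ and, by the triangle inequality, $|u-y|\geq s$ for every $u\in\partial B(x,t)$. Consequently the inner integral still reduces by the mean value formula to $G_1(u,y)/c_\nu(s)$, but the outer integral
\begin{equation*}
\Psi(y):=\int_{\partial B(x,t)}G_1(u,y)\,\sigma_{x,t}^{ave}(du)
\end{equation*}
cannot be handled the same way because the singularity $y\in B(x,t)$ lies \emph{inside} the sphere of integration. The idea is to analyze $\Psi$ as a function of $y\in\mathbb{R}^\nu$: it is spherically symmetric about $x$, satisfies $(I-\Delta)\Psi=\sigma_{x,t}^{ave}$ distributionally, is continuous across $\partial B(x,t)$ (as a single-layer potential for the elliptic operator $I-\Delta$), and for $y\notin\bar{B}(x,t)$ coincides with $G_1(x,y)/c_\nu(t)$ by the Case (i) reduction carried out in the remaining direction. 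On $B(x,t)$ the function $\Psi$ is therefore a regular spherically symmetric $(I-\Delta)$-harmonic function, hence a multiple of $I_\mu(|\cdot-x|)/|\cdot-x|^\mu$; matching continuously at $|y-x|=t$ pins down the constant and yields
\begin{equation*}
\Psi(y)=\frac{\Gamma(\nu/2)\,2^\mu\,K_\mu(t)}{(2\pi)^{\nu/2}\,t^\mu}\cdot\frac{I_\mu(|y-x|)}{|y-x|^\mu}\quad\text{on } B(x,t).
\end{equation*}
Substituting into $\mathrm{Cov}(x,t;y,s)=c_\nu(t)\Psi(y)$ and simplifying reproduces the stated formula for $C_{incl}(t,|x-y|)$; the asymptotic $\mathcal{O}(t^{2-\nu})$ as $t\to 0^+$ follows from $K_\mu(t)/I_\mu(t)\sim\Gamma(\mu)\Gamma(\mu+1)2^{2\mu-1}t^{-2\mu}$ combined with boundedness of $I_\mu(|y-x|)/|y-x|^\mu$ near the origin. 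The main obstacle is precisely this harmonic-continuation step: rigorously establishing the continuity of $\Psi$ across $\partial B(x,t)$ and the ensuing uniqueness of its spherically symmetric regular $(I-\Delta)$-harmonic continuation into the interior.
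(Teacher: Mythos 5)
Your argument is correct, and it follows a genuinely different route from the paper. The paper's own proof is not given in full: it combines the triple-Bessel integral representation (\ref{eq:covariance for (1-Delta)^s-1}) with a classical discontinuous-integral identity from Watson's \emph{A Treatise on the Theory of Bessel Functions}, \S13.53, and otherwise defers to the appendix of \cite{CJ}. That is a special-functions computation: one plugs the hypothesis on $|x-y|$, $t$, $s$ into a known closed form for $\int_0^\infty \tau^{2-\nu/2} J_\mu(t\tau)J_\mu(s\tau)J_\mu(|x-y|\tau)(1+\tau^2)^{-1}\,d\tau$. You instead read the covariance as $\iint G_1(u,v)\,\sigma_{x,t}^{ave}(du)\,\sigma_{y,s}^{ave}(dv)$ with $G_1$ the Bessel potential kernel, and collapse the two averages using the mean-value property for the Helmholtz operator $I-\Delta$. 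This is more conceptual: it makes transparent \emph{why} the geometric hypotheses (disjointness, inclusion) are exactly what produce a clean closed form, and it extends naturally to analogous questions (e.g.\ overlapping spheres are seen at once to be genuinely harder). The price is that part (ii) needs the potential-theoretic facts about the single-layer potential $\Psi$, whereas the Watson identity handles both parts in one stroke.

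One small correction: what you flag as ``the main obstacle'' is in fact routine. The single-layer potential $\Psi(y)=\int_{\partial B(x,t)} G_1(u,y)\,\sigma_{x,t}^{ave}(du)$ has kernel $|u-y|^{2-\nu}$ integrated over a codimension-one sphere, which is an integrable singularity; the standard estimate
\[
\int_0^1 \bigl(\delta^2+\rho^2\bigr)^{(2-\nu)/2}\rho^{\nu-2}\,d\rho = \mathcal{O}(1)\quad\text{as }\delta\to0
\]
shows the family of integrands is uniformly integrable in $y$ near $\partial B(x,t)$, giving continuity across the sphere. Uniqueness of the regular spherically symmetric $(I-\Delta)$-harmonic extension into $B(x,t)$ is also automatic: the radial part of $\Psi$ solves the modified Bessel ODE and regularity at the origin kills the $K_\mu$ branch, leaving a multiple of $I_\mu(r)/r^\mu$. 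So your argument closes without any genuine gap; you should simply cite (or include) these two standard potential-theory facts rather than present them as open issues.
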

Again, by combining (\ref{eq:covariance for (1-Delta)^s-1}) with
a formula in \cite{BesselFunctions} ($\mathsection$13.53\foreignlanguage{american}{,
pp 429-430}), one can easily verify these results. An alternative
derivation was also provided in the Appendix of \cite{CJ}. We omit
the proofs and refer to \cite{BesselFunctions} and \cite{CJ} for
details. We remark that (\ref{eq:cov non-concentric non-overlapping})
and (\ref{eq:cov non-concentric inclusion}) demonstrate the advantage
of this particular renormalization of the spherical averages. For
the family of the renormalized spherical averages, under the hypothesis
(i) or (ii) in Lemma \ref{lem:asymptotics of the cov functions},
small radius (radii) does not affect the covariance, which favors
convergence as radius (radii) tends to zero. 

However, one still needs to treat the renormalized spherical averages
in the most general case. To this end, we introduce the intrinsic
metric $d$ associated with the Gaussian family $\left\{ \bar{\theta}_{t}\left(x\right):\, x\in\mathbb{R}^{\nu},\, t\in(0,1]\right\} $
where
\[
d\left(x,t;\, y,s\right):=\left(\mathbb{E}^{\mathcal{W}}\left[\left|\bar{\theta}_{t}\left(x\right)-\bar{\theta}_{s}\left(y\right)\right|^{2}\right]\right)^{\frac{1}{2}}
\]
for $x,y\in\mathbb{R}^{\nu}$ and $t,s\in(0,1]$. Assuming $0<s\leq t\leq1$,
the triangle inequality implies that 
\begin{equation}
d\left(x,t;\, y,s\right)\leq d\left(x,t;\, y,t\right)+\sqrt{G\left(s\right)-G\left(t\right)},\label{eq:triangle for d(x,t;y,s)}
\end{equation}
so to work with $d\left(x,t;\, y,s\right)$, we need to study $d\left(x,t;\, y,t\right)$,
i.e., the intrinsic metric associated with the family $\left\{ \bar{\theta}_{t}\left(x\right):\, x\in\mathbb{R}^{\nu}\right\} $
with $t\in(0,1]$ fixed. 
\begin{lem}
\label{lem:estimate for d^2_t non-concentric} There exists a constant%
\footnote{Throughout the article, $C_{\nu}$ denotes a constant that only depends
on the dimension, and $C_{\nu}$'s value may vary from line to line.%
} $C_{\nu}>0$ such that for every $t\in(0,1]$ and every $x,y\in\mathbb{R}^{\nu}$,
\begin{equation}
d^{2}\left(x,t;\, y,t\right)\leq C_{\nu}\cdot t^{2-\nu}\left(\sqrt{\frac{\left|x-y\right|}{t}}\wedge1\right).\label{eq:estimate intrinsic metric non-concentric}
\end{equation}
\end{lem}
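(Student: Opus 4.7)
The plan is to write $d^{2}(x,t;y,t)=2G(t)-2\,\mathrm{Cov}(x,t;y,t)$ and split into two regimes according to whether $|x-y|\geq t$ (where $\sqrt{|x-y|/t}\wedge 1=1$) or $|x-y|<t$ (where the estimate $\sqrt{|x-y|/t}$ is genuinely needed). The break point of the claimed bound suggests this is the natural dichotomy.

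In the far regime $|x-y|\geq t$, Cauchy--Schwarz gives $|\mathrm{Cov}(x,t;y,t)|\leq G(t)$, so $d^{2}\leq 4G(t)$. Combined with $G(t)=\mathcal{O}(t^{2-\nu})$ from \eqref{eq:concentric cov renorm}, this immediately yields $d^{2}\leq C_{\nu}t^{2-\nu}$, matching the lemma in this regime.

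In the near regime $|x-y|<t$, I would compute via Parseval, writing $d^{2}(x,t;y,t)=\|\bar\sigma_{x,t}-\bar\sigma_{y,t}\|_{H^{-1}}^{2}$, plug in \eqref{eq:Fourier transform of spherical average}, and absorb the renormalization factor $K=(t/2)^{(\nu-2)/2}/(\Gamma(\nu/2)\,I_{(\nu-2)/2}(t))$, which is $\mathcal{O}(1)$ for $t\in(0,1]$, to obtain
\[
d^{2}\;\leq\;\frac{C_{\nu}}{t^{\nu-2}}\int_{\mathbb{R}^{\nu}}\frac{\bigl(1-\cos(x-y,\xi)_{\mathbb{R}^{\nu}}\bigr)\,|\xi|^{2-\nu}\,J_{(\nu-2)/2}(t|\xi|)^{2}}{|\xi|^{2}}\,d\xi,
\]
where I used $(1+|\xi|^{2})^{-1}\leq |\xi|^{-2}$. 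Passing to polar coordinates, performing the angular integral via the classical identity $\int_{S^{\nu-1}}e^{i\rho(\bar\eta,\omega)_{\mathbb{R}^{\nu}}}\,d\omega=\alpha_{\nu}\Gamma(\nu/2)(2/\rho)^{(\nu-2)/2}J_{(\nu-2)/2}(\rho)$, and then rescaling $s=t|\xi|$ with $\eta:=|x-y|/t\in(0,1)$, the problem reduces to showing
\[
\int_{0}^{\infty}\frac{J_{(\nu-2)/2}(s)^{2}}{s}\bigl(1-F_{\nu}(s\eta)\bigr)\,ds\;=\;\mathcal{O}(\eta),
\]
where $F_{\nu}(\rho):=\Gamma(\nu/2)(2/\rho)^{(\nu-2)/2}J_{(\nu-2)/2}(\rho)$ satisfies $1-F_{\nu}(\rho)\leq C\min(\rho^{2},1)$. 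Splitting the $s$-integral at $s=1/\eta$ and using the Bessel asymptotics $J_{(\nu-2)/2}(s)^{2}=\mathcal{O}(s^{\nu-2})$ as $s\to 0$ and $J_{(\nu-2)/2}(s)^{2}=\mathcal{O}(1/s)$ as $s\to\infty$, the low-frequency piece is bounded by $C\eta^{2}\int_{0}^{1/\eta}s\,J(s)^{2}\,ds=\mathcal{O}(\eta)$ and the high-frequency piece by $C\int_{1/\eta}^{\infty}J(s)^{2}/s\,ds=\mathcal{O}(\eta)$. Since $\eta\leq 1$ gives $\eta\leq\sqrt{\eta}$, this delivers $d^{2}\leq C_{\nu}t^{2-\nu}\sqrt{|x-y|/t}$.

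The main obstacle will be the Bessel-function bookkeeping in the near regime: the crude pointwise bound $|J_{(\nu-2)/2}|\leq 1$ discards all oscillatory decay and cannot produce any $\eta$-dependence at all, so one must exploit the averaged asymptotic $J_{(\nu-2)/2}(s)^{2}\sim 1/(\pi s)$ at infinity. Once this is handled the argument actually produces the stronger Lipschitz-type scaling $\mathcal{O}(\eta)$ in the near regime, which in particular implies the H\"older-$1/2$ bound stated in the lemma; the weaker form is likely chosen because $\sqrt{|x-y|/t}$ is what will be convenient for later Dudley-type chaining arguments.
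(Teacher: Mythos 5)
Your proof is correct and tracks the paper's framework closely: both start from the Parseval/Fourier representation of $d^{2}(x,t;y,t)$, isolate the ``phase defect'' function $\Psi(w)=1-F_{\nu}(w)$ (the paper calls it $\Psi$; it has a double zero at $0$, is analytic, and satisfies $0\leq\Psi\leq 2$ since $F_{\nu}$ is a normalized spherical Fourier transform), and then estimate the resulting one-dimensional Bessel integral. The difference is in the pointwise bound on $\Psi$ and the bookkeeping that follows. The paper uses the single crude estimate $|\Psi(w)|\leq C_{\nu}\sqrt{w}$ valid for all $w\geq 0$, pulls $\sqrt{|x-y|}$ out of the integral in one shot, and closes with the trivial bound $u^{3/2}/(t^{2}+u^{2})\leq u^{-1/2}$; the far regime $|x-y|\geq 2t$ is handled separately via the exact disjoint-ball formula \eqref{eq:cov non-concentric non-overlapping}. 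You instead use the sharp scaling $|\Psi(w)|\leq C\min(w^{2},1)$, split the $s$-integral at $s=1/\eta$, and exploit the averaged decay $J_{(\nu-2)/2}(s)^{2}=\mathcal{O}(1/s)$, which — as you correctly observe — produces the strictly stronger Lipschitz-type bound $d^{2}\leq C_{\nu}t^{2-\nu}\bigl(\tfrac{|x-y|}{t}\wedge 1\bigr)$; the stated H\"older-$1/2$ form then follows from $\eta\leq\sqrt{\eta}$ on $(0,1]$. Your far-regime argument via Cauchy--Schwarz ($d^{2}\leq 4G(t)=\mathcal{O}(t^{2-\nu})$) is also valid and a touch more elementary than invoking \eqref{eq:cov non-concentric non-overlapping}. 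In short: same route, but a tighter estimate on $\Psi$ plus a dyadic split of the Bessel integral buys a sharper modulus of continuity; the paper's one-inequality version is shorter and already enough for the chaining that follows.
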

\begin{proof}
Based on (\ref{eq:cov non-concentric non-overlapping}), when $\left|x-y\right|\geq2t$,
\[
d^{2}\left(x,t;\, y,t\right)=2G\left(t\right)-2C_{disj}\left(\left|x-y\right|\right)
\]
which immediately implies (\ref{eq:estimate intrinsic metric non-concentric}).
More generally, using (\ref{eq:covariance for (1-Delta)^s-1}) and
(\ref{eq:covariance for (1-Delta)^s concentric}), we can rewrite
$d^{2}\left(x,t;\, y,t\right)$ as 
\[
\begin{split}d^{2}\left(x,t;\, y,t\right) & =\mathbb{E}^{\mathcal{W}}\left[\left(\bar{\theta}_{t}\left(x\right)-\bar{\theta}_{t}\left(y\right)\right)^{2}\right]\\
 & =\frac{2\alpha_{\nu}}{\left(2\pi\right)^{\nu}I_{\frac{\nu-2}{2}}^{2}\left(t\right)}\int_{0}^{\infty}\frac{\tau}{1+\tau^{2}}J_{\frac{\nu-2}{2}}^{2}\left(t\tau\right)\Psi\left(\tau\left|x-y\right|\right)d\tau
\end{split}
\]
 where $\Psi$ is the function given by 
\[
\forall w\in\left(0,\infty\right),\quad\Psi\left(w\right):=1-\frac{\left(2\pi\right)^{\nu/2}}{\alpha_{\nu}}w^{\frac{2-\nu}{2}}J_{\frac{\nu-2}{2}}\left(w\right).
\]
It follows from the properties of $J_{\frac{\nu-2}{2}}$ that $\Psi$
is analytic and 
\[
\Psi\left(w\right)=\Gamma\left(\nu/2\right)\sum_{m=1}^{\infty}\frac{\left(-1\right)^{m-1}2^{-2m}}{m!\Gamma\left(\frac{\nu}{2}+m\right)}\cdot w^{2m}.
\]
Clearly, there exists $C_{\nu}>0$ such that $\left|\Psi\left(w\right)\right|\leq C_{\nu}\sqrt{w}$
for all $w\in[0,\infty)$. Therefore, 
\[
d^{2}\left(x,t;\, y,t\right)\leq C_{\nu}\cdot t^{2-\nu}\sqrt{\left|x-y\right|}\int_{0}^{\infty}\frac{\tau^{3/2}}{1+\tau^{2}}J_{\frac{\nu-2}{2}}^{2}\left(t\tau\right)d\tau,
\]
and the integral on the right, after a change of variable $u=t\tau$,
becomes 
\[
t^{-1/2}\int_{0}^{\infty}\frac{u^{3/2}}{t^{2}+u^{2}}J_{\frac{\nu-2}{2}}^{2}\left(u\right)du\leq t^{-1/2}\int_{0}^{\infty}u^{-1/2}J_{\frac{\nu-2}{2}}^{2}\left(u\right)du=C_{\nu}\cdot t^{-1/2}
\]
which leads to the desired inequality.
\end{proof}
Based on (\ref{eq:triangle for d(x,t;y,s)}) and (\ref{eq:estimate intrinsic metric non-concentric}),
it follows from the Kolmogorov continuity theorem that there exists
a continuous modification of $\left\{ \bar{\theta}_{t}\left(x\right):\, x\in\mathbb{R}^{\nu},\, t\in(0,1]\right\} $.
From now on, we assume that $\left\{ \bar{\theta}_{t}\left(x\right):\, x\in\mathbb{R}^{\nu},\, t\in(0,1]\right\} $
is such a modification. In other words, we assume that for every $\theta\in\Theta$,
$\left(x,t\right)\in\mathbb{R}^{\nu}\times(0,1]\mapsto\bar{\theta}_{t}\left(x\right)\in\mathbb{R}$
is continuous.

Since the distribution of the GFF is translation invariant and the
notion of ``thick point'' only concerns local properties of the
GFF, without loss of generality, we may restrict the GFF to $\overline{S\left(O,1\right)}$
the closed cube centered at the origin with side length $2$ under
the Euclidean metric%
\footnote{Similarly, for $x\in\overline{S\left(O,1\right)}$ and $s>0$, $\overline{S\left(x,s\right)}$
is the Euclidean closed cube centered at $x$ with side length $2s$.%
}. We will apply the metric entropy method (\cite{AT,Talagrand,Dudley})
to study the boundedness and the continuity of the family $\left\{ \bar{\theta}_{t}\left(x\right):\, x\in\overline{S\left(O,1\right)},\, t\in(0,1]\right\} $.
To set this up, we need to introduce some more notation. For every
compact subset $A\subseteq\overline{S\left(O,1\right)}\times(0,1]$,
let $\mbox{diam}_{d}\left(A\right)$ be the diameter of $A$ under
the metric $d$. $A$ is also compact under $d$, so $A$ can be finitely
covered under $d$. For $\epsilon>0$ and $\mathbf{x}\in\overline{S\left(O,1\right)}\times(0,1]$,
let $B_{d}\left(\mathbf{x},\epsilon\right)$ be the open ball centered
at $\mathbf{x}$ with radius $\epsilon$ under $d$, and $N\left(\epsilon,A\right)$
the smallest number of such balls $B_{d}\left(\mathbf{x},\epsilon\right)$
required to cover $A$. Then $N$ is the metric entropy function with
respect to $d$. Applying the standard entropy methods, we get the
following results.
\begin{lem}
\label{lem:expectation of max non-concentric}There exists a constant
$C_{\nu}>0$ such that for every $t,s\in(0,1]$ and every $x\in\overline{S\left(O,1\right)}$,
\begin{equation}
\mathbb{E}^{\mathcal{W}}\left[\sup_{y\in\overline{S\left(x,s\right)}}\,\left|\bar{\theta}_{t}\left(y\right)\right|\right]\leq C_{\nu}\cdot t^{1-\nu/2}\left(\frac{s^{1/4}}{t^{1/4}}\wedge\sqrt{\ln\left(\frac{s}{t}\right)}\right).\label{eq:expectation of sup spatial}
\end{equation}
\end{lem}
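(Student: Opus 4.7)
The plan is to apply Dudley's metric entropy bound for the supremum of a centered Gaussian family. Applying it to $\left\{ \bar{\theta}_{t}\left(y\right)-\bar{\theta}_{t}\left(x\right):\, y\in\overline{S\left(x,s\right)}\right\}$ equipped with the intrinsic metric $d_{t}\left(y,y'\right):=d\left(y,t;\, y',t\right)$, combined with the triangle inequality $\left|\bar{\theta}_{t}\left(y\right)\right|\leq\left|\bar{\theta}_{t}\left(x\right)\right|+\left|\bar{\theta}_{t}\left(y\right)-\bar{\theta}_{t}\left(x\right)\right|$ and the centering estimate $\mathbb{E}^{\mathcal{W}}\left[\left|\bar{\theta}_{t}\left(x\right)\right|\right]\leq\sqrt{G\left(t\right)}=\mathcal{O}\left(t^{1-\nu/2}\right)$, the problem reduces to controlling the entropy integral
\[
I:=\int_{0}^{D}\sqrt{\ln N\left(\epsilon,\overline{S\left(x,s\right)},d_{t}\right)}\, d\epsilon,\quad D:=\mbox{diam}_{d_{t}}\left(\overline{S\left(x,s\right)}\right).
\]
The key input is Lemma \ref{lem:estimate for d^2_t non-concentric}: inverting the bound $d_{t}^{2}\left(y,y'\right)\leq C_{\nu}t^{2-\nu}\left(\sqrt{\left|y-y'\right|/t}\wedge1\right)$ shows that $d_{t}\left(y,y'\right)\leq\epsilon$ whenever $\left|y-y'\right|\leq r\left(\epsilon\right):=\min\left(C_{\nu}^{-2}\epsilon^{4}t^{2\nu-3},\, t\right)$, so every $d_{t}$-ball of radius $\epsilon$ contains a Euclidean ball of radius $r\left(\epsilon\right)$, whence by volume comparison $N\left(\epsilon,\overline{S\left(x,s\right)},d_{t}\right)\leq C_{\nu}'\left(1+s/r\left(\epsilon\right)\right)^{\nu}$. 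The same lemma also yields $D\leq C_{\nu}^{1/2}t^{1-\nu/2}\min\left(1,\left(s/t\right)^{1/4}\right)$.

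I would then split into two regimes. When $s\leq t$, set $D_{1}:=C_{\nu}^{1/2}t^{1-\nu/2}\left(s/t\right)^{1/4}$; throughout $\epsilon\in\left(0,D_{1}\right)$ we have $r\left(\epsilon\right)\leq t$ and $s/r\left(\epsilon\right)\asymp\left(D_{1}/\epsilon\right)^{4}$, so $\ln N\left(\epsilon\right)\leq4\nu\ln\left(D_{1}/\epsilon\right)+\mathcal{O}\left(1\right)$, and the substitution $\epsilon=D_{1}u$ gives $I\leq C_{\nu}''D_{1}=\mathcal{O}\left(t^{1-\nu/2}\left(s/t\right)^{1/4}\right)$. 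When $s>t$, the intrinsic diameter saturates at $D_{2}:=C_{\nu}^{1/2}t^{1-\nu/2}$; on $\left(0,D_{2}\right)$ we have $s/r\left(\epsilon\right)\geq s/t>1$, so
\[
\ln N\left(\epsilon\right)\leq\nu\ln\left(s/t\right)+4\nu\ln\left(D_{2}/\epsilon\right)+\mathcal{O}\left(1\right),
\]
and subadditivity of $\sqrt{\cdot}$ produces $I\leq D_{2}\sqrt{\nu\ln\left(s/t\right)}+\mathcal{O}\left(D_{2}\right)=\mathcal{O}\left(t^{1-\nu/2}\sqrt{\ln\left(s/t\right)}\right)$. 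Absorbing the centering term $\sqrt{G\left(t\right)}$ into the constant and combining the two regimes yields the claimed estimate.

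The main technical subtlety is in the $s>t$ regime: one must ensure that only the logarithmic factor $\sqrt{\ln\left(s/t\right)}$---rather than a polynomial in $s/t$---emerges from the entropy integral, despite the Euclidean diameter of $\overline{S\left(x,s\right)}$ being potentially much larger than $t$. This works precisely because the H\"older exponent $1/4$ in Lemma \ref{lem:estimate for d^2_t non-concentric} forces $r\left(\epsilon\right)$ to scale as $\epsilon^{4}$, so covering numbers grow only quartically in $1/\epsilon$, and the resulting logarithmic integrand is controlled by the saturated diameter $D_{2}$ times the mild factor $\sqrt{\ln\left(s/t\right)}$.
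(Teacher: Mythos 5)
Your overall strategy is the same as the paper's: apply Dudley's entropy bound with the intrinsic-metric estimate of Lemma \ref{lem:estimate for d^2_t non-concentric}, invert $d^{2}\le C_{\nu}t^{2-\nu}\bigl(\sqrt{|y-y'|/t}\wedge1\bigr)$ into a covering-number bound $N(\epsilon)\lesssim\bigl(s/r(\epsilon)\bigr)^{\nu}$ with $r(\epsilon)\asymp\epsilon^{4}t^{2\nu-3}$, split at $s\asymp t$, and extract the $\sqrt{\ln(s/t)}$ factor from the saturated diameter $D_{2}\asymp t^{1-\nu/2}$ in the regime $s>t$. The entropy computation itself is correct and matches the paper's in substance.

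There is, however, a genuine gap in the final step: the centering term $\mathbb{E}^{\mathcal{W}}\bigl[|\bar{\theta}_{t}(x)|\bigr]=\mathcal{O}\bigl(\sqrt{G(t)}\bigr)=\mathcal{O}\bigl(t^{1-\nu/2}\bigr)$ cannot be ``absorbed into the constant.'' The right-hand side of (\ref{eq:expectation of sup spatial}), $C_{\nu}t^{1-\nu/2}\bigl(s^{1/4}/t^{1/4}\wedge\sqrt{\ln(s/t)}\bigr)$, tends to zero as $s\searrow0$ with $t$ fixed, while $t^{1-\nu/2}$ does not, so no choice of $C_{\nu}$ makes that absorption work uniformly in $s$. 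In fact your instinct to supply a centering term is a sharp observation that exposes an inconsistency in the lemma as written: since $\mathbb{E}^{\mathcal{W}}\bigl[\sup_{y}|\bar{\theta}_{t}(y)|\bigr]\ge\mathbb{E}^{\mathcal{W}}\bigl[|\bar{\theta}_{t}(x)|\bigr]\gtrsim t^{1-\nu/2}$, the stated inequality cannot hold with the absolute value for small $s/t$. The paper's own proof invokes \cite{AT}, Theorem 1.3.3, which controls $\mathbb{E}\bigl[\sup_{y}\bar{\theta}_{t}(y)\bigr]$ rather than $\mathbb{E}\bigl[\sup_{y}|\bar{\theta}_{t}(y)|\bigr]$; the absolute value is effectively a typo there. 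The clean resolution is to drop the absolute value, or equivalently to state the bound for the anchored quantity $\mathbb{E}^{\mathcal{W}}\bigl[\sup_{y}|\bar{\theta}_{t}(y)-\bar{\theta}_{t}(x)|\bigr]$, which is exactly what your entropy argument does establish, rather than to attempt the impossible absorption.
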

\begin{proof}
By (\ref{eq:estimate intrinsic metric non-concentric}), there exists
$C_{\nu}>0$ such that for every $y,\, y^{\prime}\in\overline{S\left(x,s\right)}$,
\[
d\left(y,t;\, y^{\prime},t\right)\leq C_{\nu}\cdot t^{1-\nu/2}\left(\frac{\left|y-y^{\prime}\right|^{1/4}}{t^{1/4}}\wedge1\right).
\]
First we assume that $2\sqrt{\nu}s\leq t$. For any $\epsilon>0$,
$d\left(y,t;\, y^{\prime},t\right)\leq\epsilon$ whenever $\left|y-y^{\prime}\right|\leq C_{\nu}^{-1}\cdot\epsilon^{4}t^{2\nu-3}$.
Therefore, for a possibly larger $C_{\nu}$, 
\[
N\left(\epsilon,\overline{S\left(x,s\right)}\times\left\{ t\right\} \right)\leq C_{\nu}\left(s\epsilon^{-4}t^{3-2\nu}\right)^{\nu}.
\]
Besides, (\ref{eq:estimate intrinsic metric non-concentric}) implies
that 
\[
\mbox{diam}_{d}\left(\overline{S\left(x,s\right)}\times\left\{ t\right\} \right)\leq C_{\nu}\cdot s^{1/4}t^{(3-2\nu)/4}.
\]
By the standard results on entropy (\cite{AT}, Theorem 1.3.3), there
exists a universal constant $K>0$ (later $K$ will be absorbed by
$C_{\nu}$) such that 
\[
\begin{split}\mathbb{E}^{\mathcal{W}}\left[\sup_{y\in\overline{S\left(x,s\right)}}\left|\bar{\theta}_{t}\left(y\right)\right|\right] & \leq K\int_{0}^{\mbox{diam}_{d}\left(\overline{S\left(x,s\right)}\times\left\{ t\right\} \right)/2}\,\sqrt{\ln N\left(\epsilon,\overline{S\left(x,s\right)}\times\left\{ t\right\} \right)}d\epsilon\\
 & \leq4K\nu\int_{0}^{C_{\nu}\cdot s^{1/4}t^{(3-2\nu)/4}}\sqrt{\ln\left(C_{\nu}\cdot s^{1/4}t^{(3-2\nu)/4}\epsilon^{-1}\right)}d\epsilon.\\
 & \leq C_{\nu}\cdot s^{1/4}t^{(3-2\nu)/4}\int_{0}^{\infty}e^{-u^{2}}u^{2}du,
\end{split}
\]
which leads to (\ref{eq:expectation of sup spatial}). 

Next, if $2\sqrt{\nu}s>t$, then $\mbox{diam}_{d}\left(\overline{S\left(x,s\right)}\times\left\{ t\right\} \right)\leq C_{\nu}\cdot t^{1-\nu/2}$.
Following exactly the same arguments as earlier, we arrive at 
\[
\begin{split}\mathbb{E}^{\mathcal{W}}\left[\sup_{y\in\overline{S\left(x,s\right)}}\left|\bar{\theta}_{t}\left(y\right)\right|\right] & \leq C_{\nu}\cdot s^{1/4}t^{(3-2\nu)/4}\int_{\sqrt{\ln\left(C_{\nu}\cdot s^{1/4}t^{-1/4}\right)}}^{\infty}\, e^{-u^{2}}u^{2}du.\end{split}
\]
Combining this with the fact that 
\[
\int_{a}^{\infty}e^{-u^{2}}u^{2}du=\mathcal{O}\left(ae^{-a^{2}}\right)\mbox{ for sufficiently large }a>0,
\]
we arrive at the desired conclusion.
\end{proof}

\subsection{When $p\geq2$.}

As shown in Lemma \ref{lem:covariance C^1(t,s)}, the concentric spherical
averages of the dim-$\nu$ order-1 GFF is a backward Markov Gaussian
process, which enables the renormalization that transforms it into
a time-changed Brownian motion. However, when $\left(I-\Delta\right)$
is replaced by $\left(I-\Delta\right)^{p}$ for $p\geq2$, spherical
averages of the corresponding GFF no longer possess such properties.
In particular, for the dim-$\nu$ order-$p$ GFF with $p\geq2$, for
any fixed $x\in\mathbb{R}^{\nu}$, the concentric spherical average
process $\left\{ \mathcal{I}\left(h_{\sigma_{x,t}^{ave}}\right):\, t\in(0,1]\right\} $
fails to be backward Markovian. Nonetheless, it is still possible
to explicitly compute the covariance of this process, the result of
which shows that, although not being an exact one, the process is
``close'' to becoming a Markov process. To make this rigorous, we
adopt the same method as the one presented in \cite{CJ}. For simplicity,
we only outline the idea here and refer to \cite{CJ} for more details. 

The derivations of the covariance of the spherical averages, as shown
in $\mathsection3.1$ and $\mathsection3.2$, can be generalized to
the operator $m^{2}-\Delta$ for any $m>0$. To be specific, if the
operator $I-\Delta$ is replaced by $m^{2}-\Delta$ in constructing
the dim-$\nu$ order-1 GFF, then for every $x,y\in\mathbb{R}^{\nu}$
and every $t,s\in(0,1]$,
\[
\begin{split} & \mathbb{E}^{\mathcal{W}}\left[\mathcal{I}\left(h_{\sigma_{x,t}^{ave}}\right)\mathcal{I}\left(h_{\sigma_{y,s}^{ave}}\right)\right]\\
 & \qquad\qquad=\frac{\left(2\pi\right)^{\nu/2}}{\alpha_{\nu}^{2}\left(ts\left|x-y\right|\right)^{\frac{\nu-2}{2}}}\int_{0}^{\infty}\frac{\tau^{2-\frac{\nu}{2}}J_{\frac{\nu-2}{2}}\left(t\tau\right)J_{\frac{\nu-2}{2}}\left(s\tau\right)J_{\frac{\nu-2}{2}}\left(\left|x-y\right|\tau\right)}{m^{2}+\tau^{2}}d\tau.
\end{split}
\]
Comparing this expression with the general formula (\ref{eq:covariance for (1-Delta)^s-1}),
one can easily verify that, for the dim-$\nu$ order-$p$ GFF, $\mathbb{E}^{\mathcal{W}^{p}}\left[\mathcal{I}\left(h_{\sigma_{x,t}^{ave}}\right)\mathcal{I}\left(h_{\sigma_{y,s}^{ave}}\right)\right]$
is equal to
\[
\begin{split}\frac{\left(2\pi\right)^{\nu/2}/(p-1)!}{\alpha_{\nu}^{2}\left(ts\left|x-y\right|\right)^{\frac{\nu-2}{2}}}\left(\frac{-1}{2m}\frac{d}{dm}\right)_{m=1}^{p-1}\left[\int_{0}^{\infty}\frac{\tau^{2-\frac{\nu}{2}}J_{\frac{\nu-2}{2}}\left(t\tau\right)J_{\frac{\nu-2}{2}}\left(s\tau\right)J_{\frac{\nu-2}{2}}\left(\left|x-y\right|\tau\right)}{m^{2}+\tau^{2}}d\tau\right].\end{split}
\]
In particular, when $x=y$ and $0<s\leq t\leq1$,
\[
\begin{split} & \mathbb{E}^{\mathcal{W}^{p}}\left[\mathcal{I}\left(h_{\sigma_{x,t}^{ave}}\right)\mathcal{I}\left(h_{\sigma_{x,s}^{ave}}\right)\right]\\
 & \qquad\qquad=\frac{1}{\alpha_{\nu}\left(ts\right)^{\frac{\nu-2}{2}}\left(p-1\right)!}\left(\frac{-1}{2m}\frac{d}{dm}\right)_{m=1}^{p-1}\left[K_{\frac{\nu-2}{2}}\left(mt\right)I_{\frac{\nu-2}{2}}\left(ms\right)\right],
\end{split}
\]
the RHS of which obviously takes the form of 
\begin{equation}
\sum_{k=1}^{p}a_{k}\left(t\right)b_{k}\left(s\right)\label{eq:cov for order-p GFF}
\end{equation}
where functions $a_{k}$ only depend on $t$ and functions $b_{k}$
only depend on $s$ for each $k=1,...,p$. A covariance of the form
of (\ref{eq:cov for order-p GFF}) does indicate that the Gaussian
process $\left\{ \mathcal{I}\left(h_{\sigma_{x,t}^{ave}}\right):t\in(0,1]\right\} $
is not backward Markovian. Heuristically speaking, at any given radius,
the spherical average alone ``provides'' too little information
for one to predict how the process will evolve for smaller radii.
To restore the Markov property, we need to ``collect'' more information%
\footnote{This idea was originally proposed by D. Stroock during a discussion
with the author.%
} about the GFF over each sphere. 

To this end, recall the remark at the end of $\mathsection2$ that
the higher the order of the operator is, the more regular the corresponding
GFF becomes. In particular, for $p\ge2$, the $l-$th derivative of
the spherical average measure in radius, i.e., $\left(d/dt\right)^{l}\sigma_{x,t}^{ave}$
in the sense of tempered distribution, also gives rise to a Paley-Wiener
integral $\mathcal{I}\left(h_{\left(d/dt\right)^{l}\sigma_{x,t}^{ave}}\right)$
for $l=1,\cdots,p-1$. It turns out that, with $x\in\mathbb{R}^{\nu}$
fixed, if $\mathbf{V}_{x,t}$, for $t\in(0,1]$, is the $\mathbb{R}^{p}-$valued
random variable on $\Theta^{p}$ given by 
\[
\mathbf{V}_{x,t}:=\left(\mathcal{I}\left(h_{\sigma_{x,t}^{ave}}\right),\mathcal{I}\left(h_{\left(d/dt\right)\sigma_{x,t}^{ave}}\right),\ldots,\mathcal{I}\left(h_{\left(d/dt\right)^{p-1}\sigma_{x,t}^{ave}}\right)\right),
\]
then the process $\left\{ \mathbf{V}_{x,t}:\, t\in(0,1]\right\} $
is a $\mathbb{R}^{p}-$valued Gaussian (backward) Markov process,
and for $0<s\leq t\leq1$, 
\begin{equation}
\mathbb{E}^{\mathcal{W}^{p}}\left[\left(\mathbf{V}_{x,t}\right)^{\top}\left(\mathbf{V}_{x,s}\right)\right]=\mathbf{A}\left(t\right)\cdot\mathbf{B}\left(s\right),\label{eq:matrix covariance}
\end{equation}
where ``$\cdot$'' here refers to matrix multiplication, $\mathbf{A}\left(t\right)$
and $\mathbf{B}\left(s\right)$ are two $p\times p$ matrices depending
only on $t$ and, respectively, only on $s$, and for $1\leq i,j\leq p$,
\[
\left(\mathbf{A}\left(t\right)\right)_{ij}=\left(\frac{d}{dt}\right)^{i-1}a_{j}\left(t\right)\mbox{ and }\left(\mathbf{B}\left(s\right)\right)_{ij}=\left(\frac{d}{ds}\right)^{j-1}b_{i}\left(s\right),
\]
where $a_{j}$'s and $b_{i}$'s are as in (\ref{eq:cov for order-p GFF}).
In other words, when collecting simultaneously the spherical average
and its first $(p-1)$st order derivatives, the Markov property is
restored by this vector-valued process. Furthermore, the matrix $\mathbf{B}\left(s\right)$
is non-degenerate when $s$ is sufficiently small, so (\ref{eq:matrix covariance})
also leads to a renormalization which is $\mathbf{U}_{x,t}:=\mathbf{V}_{x,t}\cdot\mathbf{B}^{-1}\left(t\right)$.
It follows from (\ref{eq:matrix covariance}) that, for $0<s\leq t\leq1$,
\[
\mathbb{E}^{\mathcal{W}^{p}}\left[\left(\mathbf{U}_{x,t}\right)^{\top}\left(\mathbf{U}_{x,s}\right)\right]=\mathbf{B}^{-1}\left(t\right)\cdot\mathbf{A}\left(t\right).
\]
The renormalized process $\left\{ \mathbf{U}_{x,t}:\, t\in(0,1]\right\} $
has independent increment (in the direction of $t$ decreasing). Moreover,
it is possible to find a constant vector $\xi\in\mathbb{R}^{p}$ such
that, as $t\searrow0$, 
\[
\left(\sigma_{x,t}^{ave},\,(d/dt)\sigma_{x,t}^{ave},\cdots,\,(d/dt)^{p-1}\sigma_{x,t}^{ave}\right)\cdot\mathbf{B}^{-1}\left(t\right)\cdot\xi^{\top}\rightarrow\delta_{x}
\]
in the sense of tempered distribution; this is because the coefficient
of $(d/dt)^{l}\sigma_{x,t}^{\mbox{ave}}$, as a function of $t$,
decays sufficiently fast as $t\searrow0$ for each $l=1,\cdots,p-1$.
Therefore, $\bar{\theta}_{t}\left(x\right):=\mathbf{U}_{x,t}\left(\theta\right)\cdot\xi^{\top}$
still is a legitimate approximation of ``$\theta\left(x\right)$''
when $t$ is small. In other words, although the derivatives of the
spherical averages are introduced to recover the Markov property,
these derivatives do not affect the approximation of point-wise values
of the GFF. Moreover, the two-parameter family $\left\{ \bar{\theta}_{t}\left(x\right):\, x\in\mathbb{R}^{\nu},\, t\in(0,1]\right\} $
possesses the same properties as those shown in $\mathsection3.1$
and $\mathsection3.2$. 

Governed by the Green's function of $\left(I-\Delta\right)^{p}$ on
$\mathbb{R}^{\nu}$, the dim-$\nu$ order-$p$ GFF is polynomially
correlated with the degree of the polynomial being $\nu-2p$. In fact,
later discussions in this article, i.e., the study of thick point,
only requires the existence of an approximation $\bar{\theta}_{t}\left(x\right)$
such as the one obtained above. Therefore, it is sufficient to assume
$p=1$ and investigate the thick point problem for the dim-$\nu$
order-1 GFF with arbitrary $\nu>2$.

\section{Thick Points of Gaussian Free Fields}

To study the thick points of the dim-$\nu$ order-$1$ GFF $\left(H,\Theta,\mathcal{W}\right)$,
the first problem we face is to determine a proper definition for
the notion of ``thick point''. On one hand, inspired by (\ref{eq:2D thick point})
the thick point definition of log-correlated GFFs, we want to investigate
the points $x\in\overline{S\left(O,1\right)}$ where the rate of $\bar{\theta}_{t}\left(x\right)$
``blowing up'' as $t\searrow0$ is comparable with certain function
in $t$ that is singular at $t=0$. On the other hand, compared with
the log-correlated GFFs, a polynomial-correlated GFF has the properties
that, firstly, the point-wise distribution has a larger variance which
makes it harder to achieve an ``unusually'' large value required
by a limit such as the one in (\ref{eq:2D thick point}); secondly,
the near-neighbor correlation is stronger, which makes thick points,
defined in any reasonable sense, tend to stay close to each other,
and hence as a whole the set of thick points looks more sparse. Taking
into account of these considerations, we adopt a thick point definition
that is different from (\ref{eq:2D thick point}) but proven to be
more suitable for polynomial-correlated GFFs.
\begin{defn}
Let $\gamma\geq0$. For each $\theta\in\Theta$, $x\in\overline{S\left(O,1\right)}$
is a $\gamma-$\emph{thick point }of $\theta$ if
\begin{equation}
\limsup_{t\searrow0}\,\frac{\bar{\theta}_{t}\left(x\right)}{\sqrt{-G\left(t\right)\ln t}}\geq\sqrt{2\nu\gamma}\label{eq:def of thick point}
\end{equation}
where $G\left(t\right)=\mathbb{E}^{\mathcal{W}}\left[\left(\bar{\theta}_{t}\left(x\right)\right)^{2}\right]$. 
\end{defn}
We denote by $T_{\theta}^{\gamma}$ the set of all the $\gamma-$thick
points of $\theta$. Since $\bar{\theta}_{t}\left(x\right)$ is assumed
to be continuous in $\left(x,t\right)\in\overline{S\left(O,1\right)}\times(0,1]$,
$T_{\theta}^{\gamma}$ is a measurable subset of $\overline{S\left(O,1\right)}$.
Moreover, viewing from the perspective of (\ref{eq:LIL for concentric}),
if $\gamma>0$, (\ref{eq:def of thick point}) requires $\bar{\theta}_{t}\left(x\right)$
to grow, as $t\searrow0$, no slower than a unusually large function
in $t$, at least along a sequence in $t$. In this sense, the value
of $\theta$ is unusually large at a $\gamma-$thick point so long
as $\gamma>0$. Compared with (\ref{eq:2D thick point}) , the requirement
in (\ref{eq:def of thick point}) is easier to achieve, which contributes
positively to $T_{\theta}^{\gamma}$ having ``detectable'' mass.
In fact, such a deviation from (\ref{eq:2D thick point}) (i.e., replacing
in the definition ``$\lim_{t\searrow0}$'' by ``$\limsup_{t\searrow0}$''
and ``$=$'' by ``$\geq$'') is necessary, as we will see later
in $\mathsection4.2$.

Our main goal is to determine the Hausdorff dimension of $T_{\theta}^{\gamma}$,
denoted by $\dim_{\mathcal{H}}\left(T_{\theta}^{\gamma}\right)$.
We state the main result below.
\begin{thm}
\label{thm:hausdorff dimension of thick point set}For $\gamma\in\left[0,1\right]$,
\[
\dim_{\mathcal{H}}\left(T_{\theta}^{\gamma}\right)=\nu\left(1-\gamma\right)\mbox{ a.s.}.
\]
Moreover, for $\mathcal{W}-$a.e. $\theta\in\Theta$, $x\in T_{\theta}^{0}$
for Lebesgue-a.e. $x\in\overline{S\left(O,1\right)}$. 

On the other hand, for $\gamma>1$, $T_{\theta}^{\gamma}=\emptyset$
a.s..
\end{thm}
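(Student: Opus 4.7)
The plan is to split the theorem into four pieces: the upper bound $\dim_{\mathcal{H}}(T_\theta^\gamma) \leq \nu(1-\gamma)$ for $\gamma \in [0,1]$ and the emptiness $T_\theta^\gamma = \emptyset$ a.s.\ for $\gamma > 1$ (handled uniformly by a first-moment covering argument), the Lebesgue-a.e.\ statement at $\gamma = 0$ (a direct consequence of the iterated logarithm for the time-changed Brownian motion from Section 3.1), and the matching lower bound $\dim_{\mathcal{H}}(T_\theta^\gamma) \geq \nu(1-\gamma)$ for $\gamma \in (0,1]$, which requires a Frostman energy construction and is the substantial part of the argument.

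For the upper bound and emptiness I would fix the geometric sequence $t_n = e^{-n}$ and exploit $G(t_n) \asymp t_n^{2-\nu}$ from (\ref{eq:concentric cov renorm}). By (\ref{eq:def of thick point}), every $\gamma$-thick point $x$ satisfies, for infinitely many $n$,
\[
\sup_{y \in \overline{S(x, t_n)}} \bar\theta_{t_n}(y) \geq \sqrt{2\nu\gamma(1-\varepsilon)\bigl(-G(t_n)\ln t_n\bigr)}.
\]
Lemma \ref{lem:expectation of max non-concentric} bounds $\mathbb{E}^{\mathcal{W}}\bigl[\sup_{\overline{S(x,t_n)}} \bar\theta_{t_n}\bigr]$ by $C_\nu t_n^{1-\nu/2} = O\bigl(\sqrt{G(t_n)}\bigr)$, which is negligible compared to the threshold $\asymp \sqrt{n\,G(t_n)}$; applying Borell--TIS to the centered sup then yields a Gaussian tail of order $\exp(-\nu\gamma(1-\varepsilon)n) = t_n^{\nu\gamma(1-\varepsilon)}$ for each cube of side $t_n$ in a grid tiling of $\overline{S(O,1)}$. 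Summing over the $O(t_n^{-\nu})$ cubes, the expected number of bad cubes at scale $t_n$ is $O\bigl(t_n^{\nu\gamma(1-\varepsilon) - \nu}\bigr)$. For $\gamma > 1$ this tends to zero summably, so by Borel--Cantelli $T_\theta^\gamma = \emptyset$ a.s. For $\gamma \in [0,1]$ and any $s > \nu(1-\gamma)$, picking $\varepsilon$ small enough that $s > \nu(1 - \gamma(1-\varepsilon))$, the tail $\sum_{n \geq N} t_n^{s - \nu + \nu\gamma(1-\varepsilon)}$ vanishes as $N \to \infty$, which controls the Hausdorff $s$-premeasure of the $\limsup$ covering and gives $\dim_{\mathcal{H}}(T_\theta^\gamma) \leq s$, hence the upper bound.

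For the Lebesgue-a.e.\ assertion at $\gamma = 0$, by Fubini it suffices to check that $\mathcal{W}(\{\theta : x \in T_\theta^0\}) = 1$ for each fixed $x$. The time change $\tau = G(t) - G(1)$ from the remark after (\ref{eq:concentric cov renorm}) turns $t \mapsto \bar\theta_t(x) - \bar\theta_1(x)$ into a standard Brownian motion in $\tau \geq 0$, and the iterated logarithm (\ref{eq:LIL for concentric}) forces $\limsup_{t \searrow 0} \bar\theta_t(x)/\sqrt{G(t)} = +\infty$ a.s., a fortiori $\limsup_{t \searrow 0} \bar\theta_t(x)/\sqrt{-G(t)\ln t} \geq 0$.

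The main obstacle is the lower bound for $\gamma \in (0,1]$, to be carried out in Section 5 along the scheme of Hu--Miller--Peres. I would fix a rapidly decreasing sequence $r_n \searrow 0$ (rapid enough that $\bar\theta_{r_n}$ at distinct scales decouple via (\ref{eq:cov non-concentric inclusion})) and introduce the exponentially tilted density
\[
Z_n(x) := \exp\!\Bigl(\beta_n\bar\theta_{r_n}(x) - \tfrac{1}{2}\beta_n^2 G(r_n)\Bigr), \qquad \beta_n := \sqrt{\frac{-2\nu\gamma \ln r_n}{G(r_n)}},
\]
normalized so that $\mathbb{E}^{\mathcal{W}}[Z_n] = 1$ and, under the Cameron--Martin tilt, $\bar\theta_{r_n}(x)$ has mean $\beta_n G(r_n) = \sqrt{2\nu\gamma G(r_n)(-\ln r_n)}$ --- exactly the thick-point threshold. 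Truncating $Z_n$ on the event that $\bar\theta_{r_k}(x)$ has not overshot for $k \leq n$ produces a sequence of random measures $\mu_n(dx) = \tilde Z_n(x)\,dx$ with a subsequential weak limit $\mu^\gamma$. The plan is then to prove, via a second-moment calculation on $\int \tilde Z_n\,dx$, that $\mu^\gamma(T_\theta^\gamma) > 0$ with positive probability, and via a Frostman energy estimate
\[
\mathbb{E}^{\mathcal{W}}\!\left[\iint \frac{\mu^\gamma(dx)\,\mu^\gamma(dy)}{|x-y|^s}\right] < \infty \quad\text{for every } s < \nu(1-\gamma),
\]
to deduce $\dim_{\mathcal{H}}(T_\theta^\gamma) \geq s$ on $\{\mu^\gamma \not\equiv 0\}$; a zero--one argument then upgrades ``positive probability'' to ``a.s.'' The hard step will be the energy bound: the polynomial singularity of Lemma \ref{lem:asymptotics of the cov functions}(i), $\mathrm{Cov}(\bar\theta_{r_n}(x), \bar\theta_{r_n}(y)) \asymp |x-y|^{2-\nu}$, creates a near-diagonal exponential enhancement of $\mathbb{E}[\tilde Z_n(x)\tilde Z_n(y)]$ that must be balanced against $|x-y|^{-s}$, and the spacing of $r_n$ must be calibrated carefully to absorb this --- which is precisely the reason one works with a subsequence and with $\limsup \geq$ rather than $\lim =$ in (\ref{eq:def of thick point}).
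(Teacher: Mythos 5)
Your upper-bound covering argument has a genuine gap. You claim that a $\gamma$-thick point $x$ satisfies, for infinitely many $n$,
\[
\sup_{y\in\overline{S(x,t_n)}}\bar\theta_{t_n}(y)\geq\sqrt{2\nu\gamma(1-\varepsilon)\bigl(-G(t_n)\ln t_n\bigr)},
\]
but the definition (\ref{eq:def of thick point}) is a $\limsup$ over all $t\searrow 0$: the radii at which $\bar\theta_t(x)/D(t)$ comes close to $\sqrt{2\nu\gamma}$ need not land on your grid $\{t_n\}$. If the witnessing radius is some $t\in(t_{n+1},t_n)$, the independent increment $\bar\theta_{t_{n+1}}(x)-\bar\theta_t(x)$ has variance $G(t_{n+1})-G(t)\asymp G(t_{n+1})$, which is \emph{not} small relative to $G(t_n)$ because $G(t)\asymp t^{2-\nu}$ grows so fast; so the process can move by an amount comparable to the threshold between consecutive grid scales, and you cannot transfer the bound to the fixed radius $t_n$. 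Lemma \ref{lem:expectation of max non-concentric}, which you cite, only controls the spatial sup at fixed $t$ and does not address this. The paper's Lemma \ref{lem:extreme value of two parameter family} is built precisely to close this gap: it bounds $\mathbb{E}^{\mathcal{W}}\bigl[\sup_{(y,r)\in\overline{S(x_j^{(n)},2^{-n})}\times[2^{-n},2^{-n+1}]}\bar\theta_r(y)/D(r)\bigr]$ by a product-entropy covering in both $y$ and $r$, so that the two-parameter event over each cell $\overline{S(x_j^{(n)},2^{-n})}\times[2^{-n},2^{-n+1}]$ catches every thick point whose witnessing radius falls anywhere in that dyadic annulus. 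Your scheme, if repaired to use the two-parameter sup and its entropy bound, would then proceed as you outline and as the paper does.

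Your lower-bound plan takes a route that is genuinely different from the paper's, and it runs into an obstacle you flag but do not resolve. With $Z_n(x)=\exp\bigl(\beta_n\bar\theta_{r_n}(x)-\tfrac12\beta_n^2 G(r_n)\bigr)$ and $\beta_n^2\asymp(-\ln r_n)/G(r_n)$, the bare off-diagonal second moment is $\mathbb{E}^{\mathcal{W}}[Z_n(x)Z_n(y)]=\exp\bigl(\beta_n^2\,\mathrm{Cov}(\bar\theta_{r_n}(x),\bar\theta_{r_n}(y))\bigr)$, and by (\ref{eq:cov non-concentric non-overlapping}) the covariance is $\asymp|x-y|^{2-\nu}$ near the diagonal, giving $\exp\bigl(c_n|x-y|^{2-\nu}\bigr)$ --- an essential singularity that is not locally integrable. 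This is a qualitatively worse problem than in the log-correlated case, where the GMC tilt gives a merely polynomial singularity; here, without a strong truncation, the total-mass second moment simply diverges, and the energy bound you want cannot even be set up. The paper avoids this structurally: instead of an exponential tilt it uses indicator events $P_{x,i}$ which confine \emph{every} increment $\Delta\bar\theta_i(x)$ to a window of width $\sqrt{\Delta G_i}$ centered at $\sqrt{2\nu\gamma}\Delta D_i$, and then builds the cell-averaged measure $\mu_{n,\theta}$ of (\ref{eq:def of mu_n_theta}). Because the field is confined from above and below at all scales, the second-moment and $\alpha$-energy sums reduce to ratios $\mathcal{W}(\Phi_{x_j,n}\cap\Phi_{x_k,n})/[\mathcal{W}(\Phi_{x_j,n})\mathcal{W}(\Phi_{x_k,n})]$, which are controlled by decoupling at an intermediate scale $i$ and careful conditioning (Lemma \ref{lem:analysis of the probability}, Lemma \ref{lem:estimate of prob of i+1,i+2,i+1,i+2} and the Case-4 lemma); the essential singularity never appears. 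If you wish to keep the tilted-martingale (Peyri\`ere) flavor you would need both a lower barrier (so the limit measure charges $T_\theta^\gamma$) and a tight upper barrier at every scale (so the near-diagonal blow-up is killed), which is exactly the two-sided window the paper's $P_{x,i}$ impose; and you would still need the superfast decay condition (\ref{eq:condition on r_n}) on $\{r_n\}$ to make the cross-scale contributions summable.

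The remaining pieces of your proposal are sound and match the paper: the $\gamma>1$ emptiness is the same first-moment covering argument with $\gamma''>1$; and the Lebesgue-a.e.\ claim at $\gamma=0$ follows by Fubini from the single-point observation that, by the LIL (\ref{eq:LIL for concentric}) and $G(t)\asymp t^{2-\nu}$ (so that $\ln\ln G(t)=o(-\ln t)$), one has $\bar\theta_t(x)/D(t)\to 0$ with the $\limsup$ attained from above along a positive subsequence, hence $\limsup_{t\searrow0}\bar\theta_t(x)/D(t)=0\geq\sqrt{2\nu\cdot 0}$ a.s.\ for each fixed $x$.
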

The theorem is proven by establishing the upper bound and the lower
bound separately. More specifically, we prove in $\mathsection4.1$
\begin{equation}
the\; upper\; bound:\quad\dim_{\mathcal{H}}\left(T_{\theta}^{\gamma}\right)\leq\nu\left(1-\gamma\right)\mbox{ a.s.}.\label{eq:upper bound}
\end{equation}
Then we devote the entire $\mathsection5$ to proving
\begin{equation}
the\; lower\; bound:\quad\dim_{\mathcal{H}}\left(T_{\theta}^{\gamma}\right)\geq\nu\left(1-\gamma\right)\mbox{ a.s.}.\label{eq:lower bound}
\end{equation}
As mentioned earlier, the polynomially singular covariance of the
GFF makes thick points rare and hence hard to detect, as a consequence
of which, the upper bound on the Hausdorff dimension of $T_{\theta}^{\gamma}$
is readily obtained, but deriving the lower bound is more complicated.

\subsection{Proof of the Upper Bound }

The derivation of the upper bound (\ref{eq:upper bound}) follows
an adaptation of the procedure used in \cite{HMP}. To simplify the
notation, we write 
\[
D\left(t\right):=\sqrt{-G\left(t\right)\ln t}\;\mbox{ for }t\in(0,1].
\]

\begin{lem}
\label{lem:extreme value of two parameter family}There exists a constant
$C_{\nu}>0$ such that for every $x\in\overline{S\left(O,1\right)}$
and $n\geq1$, 
\begin{equation}
\mathbb{E}^{\mathcal{W}}\left[\sup_{\left(y,t\right)\in\overline{S\left(x,2^{-n}\right)}\times\left[2^{-n},\,2^{-n+1}\right]}\,\frac{\bar{\theta}_{t}\left(y\right)}{D\left(t\right)}\right]\leq C_{\nu}\cdot\frac{1}{\sqrt{n}}.\label{eq:expectation of max of two parameter family}
\end{equation}
\end{lem}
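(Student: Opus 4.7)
My plan is to apply Dudley's metric-entropy bound to the centered Gaussian process $\{\bar{\theta}_t(y):(y,t)\in A_n\}$ on the box $A_n:=\overline{S(x,2^{-n})}\times[2^{-n},2^{-n+1}]$, and then divide by $\min_{t\in[2^{-n},2^{-n+1}]}D(t)$. The key observation is that on $A_n$ the intrinsic $d$-diameter of the process is only of order $2^{n(\nu-2)/2}$, while $D(t)\asymp 2^{n(\nu-2)/2}\sqrt{n}$, so the factor $1/\sqrt{n}$ in (\ref{eq:expectation of max of two parameter family}) emerges once the expected supremum is shown to scale like the diameter rather than diameter times $\sqrt{n}$.

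Since $G(t)\asymp t^{2-\nu}$ as $t\searrow 0$ (as noted after (\ref{eq:concentric cov renorm})), $D(t)\ge c_\nu 2^{n(\nu-2)/2}\sqrt{n}$ uniformly for $t\in[2^{-n},2^{-n+1}]$, so it will suffice to prove $\mathbb{E}^{\mathcal{W}}[\sup_{A_n}|\bar{\theta}_t(y)|]\le C_\nu 2^{n(\nu-2)/2}$. For the $d$-diameter, Lemma \ref{lem:estimate for d^2_t non-concentric} (using $\sqrt{|y-y'|/t}\wedge 1\le 1$) gives $d(y,t;y',t)\le C_\nu t^{1-\nu/2}\le C_\nu 2^{n(\nu-2)/2}$, while (\ref{eq:triangle for d(x,t;y,s)}) together with (\ref{eq:concentric cov renorm}) gives $d(y,t;y,s)\le\sqrt{G(2^{-n})}\le C_\nu 2^{n(\nu-2)/2}$; combining these, $D_A:=\mathrm{diam}_d(A_n)\le C_\nu 2^{n(\nu-2)/2}$.

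For the metric entropy I would use a product cover. Lemma \ref{lem:estimate for d^2_t non-concentric} gives $d(y,t;y',t)\le\epsilon/2$ whenever $|y-y'|\le c_\nu\epsilon^4 t^{2\nu-3}$, so $\overline{S(x,2^{-n})}$ admits, uniformly in $t\in[2^{-n},2^{-n+1}]$, a cover by at most $C_\nu(D_A/\epsilon)^{4\nu}$ $d$-balls, since the polynomial $n$-dependence is absorbed into $D_A^{4\nu}\asymp 2^{2n\nu(\nu-2)}$. The smoothness estimate $|G'(t)|\le C_\nu t^{1-\nu}$ on $[2^{-n},2^{-n+1}]$ gives $\sqrt{|G(t)-G(s)|}\le C_\nu 2^{n(\nu-1)/2}\sqrt{|t-s|}$, so $[2^{-n},2^{-n+1}]$ admits a cover by at most $C_\nu(D_A/\epsilon)^2$ $d$-balls. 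Hence $N(\epsilon,A_n)\le C_\nu(D_A/\epsilon)^{4\nu+2}$ for $\epsilon\in(0,D_A]$. Applying the standard Gaussian entropy bound (Theorem 1.3.3 of \cite{AT}) and substituting $u=\epsilon/D_A$,
\[
\mathbb{E}^{\mathcal{W}}\!\Big[\sup_{A_n}\bar{\theta}_t(y)\Big]\le K\int_0^{D_A/2}\sqrt{\ln N(\epsilon,A_n)}\,d\epsilon\le C_\nu D_A\int_0^{1/2}\sqrt{\ln(1/u)}\,du\le C_\nu D_A;
\]
by symmetry the same estimate (up to a constant factor) holds for $\mathbb{E}[\sup_{A_n}|\bar{\theta}_t(y)|]$, and combining with the opening reduction yields (\ref{eq:expectation of max of two parameter family}).

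The main technical obstacle is securing the \emph{polynomial} entropy bound $N(\epsilon,A_n)\le C_\nu(D_A/\epsilon)^M$ with a constant independent of $n$: a crude bound of the form $N(\epsilon,A_n)\le e^{cn}\epsilon^{-M}$ would contribute an extra $\sqrt{n}$ factor to the Dudley integral and spoil the $1/\sqrt{n}$ bound. The key is that all the $n$-dependence in both the spatial and temporal covering numbers is absorbed into $D_A\asymp 2^{n(\nu-2)/2}$, after which the Dudley integral scales linearly in $D_A$ and the claim follows from dividing by $D(t)$.
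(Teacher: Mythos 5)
Your proof is correct and follows essentially the same route as the paper: both apply the Gaussian metric-entropy bound (Theorem 1.3.3 of \cite{AT}) to $\bar{\theta}_t(y)$ on the box $A_n$, using a product cover built from Lemma \ref{lem:estimate for d^2_t non-concentric} for the spatial direction and the concentric increment formula (\ref{eq:concentric cov renorm}) for the temporal direction, and then divide by $D(2^{-n+1})=\min_{t\in[2^{-n},2^{-n+1}]}D(t)$. The only cosmetic difference is that you phrase the entropy bound uniformly as $N(\epsilon,A_n)\le C_\nu(D_A/\epsilon)^{4\nu+2}$ with the $n$-dependence absorbed into $D_A$, whereas the paper carries the $n$-dependence explicitly through $\tau_{n,\epsilon}$ and $L_\epsilon,M_\epsilon$; both yield the same Dudley integral $\leq C_\nu D_A\asymp C_\nu\sqrt{G(2^{-n})}$, and your explicit identification of the ``polynomial entropy bound with $n$-independent constant'' as the crux is exactly the mechanism the paper relies on.
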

\begin{proof}
Similarly as in Lemma \ref{lem:expectation of max non-concentric},
we will prove the desired result by the metric entropy method. Let
$n\geq1$ be fixed. For every $\epsilon>0$, set
\[
\tau_{n,\epsilon}:=\frac{1}{2}\left[\left(\frac{\epsilon^{2}}{9}\cdot C_{\nu}^{-1}\cdot2^{-\left(n+1\right)\left(\nu-3/2\right)}\right)^{2}\wedge2^{-n-1}\right]
\]
where $C_{\nu}$, for the moment, is the same constant as in (\ref{eq:estimate intrinsic metric non-concentric}).
Let 
\[
\left\{ B\left(y_{l},\tau_{n,\epsilon}\right):\, l=1,\cdots,L_{\epsilon}\right\} 
\]
be a finite covering of $\overline{S\left(x,2^{-n}\right)}$ where
$y_{l}\in S\left(x,2^{-n}\right)$ and $L_{\epsilon}$ is the smallest
number of balls $B\left(y_{l},\tau_{n,\epsilon}\right)$ needed to
cover $\overline{S\left(x,2^{-n}\right)}$ and hence $L_{\epsilon}=\mathcal{O}\left(2^{-n\nu}/\tau_{n,\epsilon}^{\nu}\right)$.
By (\ref{eq:estimate intrinsic metric non-concentric}), the choice
of $\tau_{n,\epsilon}$ is such that the diameter of each ball $B\left(y_{l},\tau_{n,\epsilon}\right)$
under the metric $d\left(\cdot,2^{-n-1};\,*,2^{-n-1}\right)$ is no
greater than $\epsilon/3$. In fact, for any $t\geq2^{-n-1}$, (\ref{eq:estimate intrinsic metric non-concentric})
implies that, if $\left|y-y^{\prime}\right|\leq2\tau_{n,\epsilon}$,
then 
\[
d^{2}\left(y,t;\, y^{\prime},t\right)\leq C_{\nu}t^{3/2-\nu}\sqrt{2\tau_{n,\epsilon}}\leq\epsilon^{2}/9.
\]
Next, take $\tau_{0}:=2^{-n-1}$ and define $\tau_{m}$ inductively
such that 
\[
G\left(\tau_{m-1}\right)-G\left(\tau_{m}\right)=\epsilon^{2}/9
\]
for $m=1,\cdots,M_{\epsilon}$, where $M_{\epsilon}$ is the smallest
integer such that $\tau_{M_{\epsilon}}\geq2^{-n+2}$ and hence 
\[
M_{\epsilon}=\mathcal{O}\left(G\left(2^{-n}\right)\right)/\epsilon^{2}.
\]
Consider the covering of $\overline{S\left(x,2^{-n}\right)}\times\left[2^{-n},2^{-n+1}\right]$
that consists of the cylinders 
\[
\left\{ B\left(y_{l},\tau_{n,\epsilon}\right)\times(\tau_{m-1},\tau_{m}):\, l=1,\cdots,L_{\epsilon},\, m=1,\cdots,M_{\epsilon}\right\} .
\]
Any pair of points $\left(\left(y,t\right),\,\left(w,s\right)\right)$
that lies in one of the cylinders above, e.g., $B\left(y_{l},\tau_{n,\epsilon}\right)\times(\tau_{m-1},\tau_{m})$,
satisfies that 
\[
\begin{split}d\left(y,t;\, w,s\right) & \leq d\left(y,t;\, y,\tau_{m}\right)+d\left(y,\tau_{m};\, w,\tau_{m}\right)+d\left(w,\tau_{m};\, w,s\right)\\
 & \leq\epsilon/3+\epsilon/3+\epsilon/3=\epsilon.
\end{split}
\]
This implies that 
\[
N\left(\epsilon,\overline{S\left(x,2^{-n}\right)}\times\left[2^{-n},2^{-n+1}\right]\right)\leq L_{\epsilon}\cdot M_{\epsilon}
\]
where $N$ is the entropy function defined before Lemma \ref{lem:expectation of max non-concentric}.
Moreover, the diameter of $\overline{S\left(x,2^{-n}\right)}\times\left[2^{-n},2^{-n+1}\right]$
under the metric $d$ is bounded by $2\sqrt{G\left(2^{-n}\right)}$.
Therefore, there is a universal constant $K>0$ (later $K$ is absorbed
into $C_{\nu}$) such that
\[
\begin{split}\mathbb{E}^{\mathcal{W}}\left[\sup_{\left(y,t\right)\in\overline{S\left(x,2^{-n}\right)}\times\left[2^{-n},\,2^{-n+1}\right]}\,\bar{\theta}_{t}\left(y\right)\right] & \leq K\int_{0}^{\sqrt{G\left(2^{-n}\right)}}\sqrt{\ln\left(L_{\epsilon}\cdot M_{\epsilon}\right)}d\epsilon\\
 & \leq K\int_{0}^{\sqrt{G\left(2^{-n}\right)}}\left(\sqrt{\ln L_{\epsilon}}+\sqrt{\ln M_{\epsilon}}\right)d\epsilon\\
 & \leq C_{\nu}\sqrt{G\left(2^{-n}\right)}.
\end{split}
\]
(\ref{eq:expectation of max of two parameter family}) follows from
dividing both sides of the inequality above by $D\left(2^{-n+1}\right)$.
\end{proof}
Now we can get to the proof of the upper bound (\ref{eq:upper bound}).\\

\noindent \emph{Proof of the upper bound:} When $\gamma=0$, (\ref{eq:upper bound})
is trivially satisfied. Without loss of generality, we assume that
$\gamma\in(0,1]$ for the rest of the proof. For each $n\geq0$, consider
a finite lattice partition of $\overline{S\left(O,1\right)}$ with
cell size $2\cdot2^{-n}$ (i.e., the length, under the Euclidean metric,
of each side of the cell is $2\cdot2^{-n}$). Let $\left\{ x_{j}^{\left(n\right)}:\, j=1,\cdots,J_{n}\right\} $
be the collection of the lattice cell centers where $J_{n}=2^{\nu n}$
is the total number of the cells. Let $\gamma^{\prime\prime},\gamma^{\prime}$
be two numbers such that $0<\gamma^{\prime\prime}<\gamma^{\prime}<\gamma$
and $\gamma^{\prime}$ and $\gamma^{\prime\prime}$ can be arbitrarily
close to $\gamma$. Consider the subset of the indices 
\[
I_{n}:=\left\{ j:\;1\leq j\leq J_{n}\mbox{ s.t. }\sup_{\left(y,r\right)\in\overline{S\left(x_{j}^{\left(n\right)},2^{-n}\right)}\times\left[2^{-n},2^{-n+1}\right]}\,\frac{\bar{\theta}_{r}\left(y\right)}{D\left(r\right)}>\sqrt{2\nu\gamma^{\prime}}\right\} .
\]
Combining (\ref{eq:expectation of max of two parameter family}) and
the Borell-TIS inequality (\cite{AT} $\mathsection2.1$ and the references
therein), we have that, for every $j=1,\cdots,J_{n}$,
\[
\begin{split}\mathbb{\mathcal{W}}\left(j\in I_{n}\right) & =\mathcal{W}\left(\sup_{\left(y,r\right)\in\overline{S\left(x_{j}^{\left(n\right)},2^{-n}\right)}\times\left[2^{-n},2^{-n+1}\right]}\,\frac{\bar{\theta}_{r}\left(y\right)}{D\left(r\right)}>\sqrt{2\nu\gamma^{\prime}}\right)\\
 & \leq\exp\left[-\frac{1}{2}\left(\sqrt{2\nu\gamma^{\prime}}-\frac{C_{\nu}}{\sqrt{n}}\right)^{2}\cdot\ln2\cdot\left(n-1\right)\right]\\
 & \leq\exp\left(-\nu\gamma^{\prime\prime}\cdot\ln2\cdot n\right)\,.
\end{split}
\]
Therefore, 
\[
\mathbb{E}^{\mathcal{W}}\left[\mbox{card}\left(I_{n}\right)\right]=\sum_{j\in J_{n}}\mathcal{W}\left(j\in I_{n}\right)\leq C_{\nu}\cdot2^{\nu\left(1-\gamma^{\prime\prime}\right)n}.
\]

On the other hand, if $y\in T_{\theta}^{\gamma}$, then there exists
a sequence $\left\{ t_{k}:\, k\geq0\right\} $ with $t_{k}\searrow0$
as $k\nearrow\infty$ such that 
\[
\frac{\bar{\theta}_{t_{k}}\left(y\right)}{D\left(t_{k}\right)}>\sqrt{2\nu\gamma^{\prime}}\mbox{ for all }k\geq0.
\]
For every $k$, let $n(k)$ be the unique positive integer such that
\[
2^{-n(k)}\leq t_{k}<2^{-n(k)+1}.
\]
If $x_{j}^{\left(n(k)\right)}$ is the cell center (at $n(k)-$th
level) such that $\left|y-x_{j}^{\left(n(k)\right)}\right|\leq\sqrt{\nu}2^{-n(k)}$,
then clearly $j\in I_{n(k)}$. Therefore, 
\[
T_{\theta}^{\gamma}\subseteq\bigcap_{m\geq1}\,\bigcup_{n\geq m}\,\bigcup_{j\in I_{n}}\,\overline{S\left(x_{j}^{(n)},2^{-n}\right)}.
\]
Moreover, for each $m\geq1$, $\left\{ \overline{S\left(x_{j}^{\left(n\right)},2^{-n}\right)}:j\in I_{n},\, n\geq m\right\} $
forms a covering of $T_{\theta}^{\gamma}$, and the diameter (under
the Euclidean metric) of $\overline{S\left(x_{j}^{\left(n\right)},2^{-n}\right)}$
is $\sqrt{\nu}2^{-n+1}$. Thus, if $\mathcal{H}^{\alpha}$ is the
Hausdorff-$\alpha$ measure for $\alpha>0$, then
\[
\begin{split}\mathcal{H}^{\alpha}\left(T_{\theta}^{\gamma}\right) & \leq\liminf_{m\rightarrow\infty}\,\sum_{n\geq m}\,\sum_{j\in I_{n}}\,\left(2\sqrt{\nu}\cdot2^{-n}\right)^{\alpha}\\
 & =C_{\nu}\cdot\liminf_{m\rightarrow\infty}\,\sum_{n\geq m}\,\mbox{card}\left(I_{n}\right)2^{-n\alpha}.
\end{split}
\]
It follows from Fatou's lemma that
\[
\begin{split}\mathbb{E}^{\mathcal{W}}\left[\mathcal{H}^{\alpha}\left(T_{\theta}^{\gamma}\right)\right] & \leq C_{\nu}\cdot\liminf_{m\rightarrow\infty}\,\sum_{n\geq m}\,\mathbb{E}^{\mathcal{W}}\left[\mbox{card}\left(I_{n}\right)\right]\cdot2^{-n\alpha}\\
 & \leq C_{\nu}\cdot\lim_{m\rightarrow\infty}\,\sum_{n\geq m}\,2^{\left[\nu\left(1-\gamma^{\prime\prime}\right)-\alpha\right]n}\,.
\end{split}
\]
Clearly, for any $\alpha>\nu\left(1-\gamma^{\prime\prime}\right)$,
$\mathcal{H}^{\alpha}\left(T_{\theta}^{\gamma}\right)=0$ a.s. and
hence $\dim_{\mathcal{H}}\left(T_{\theta}^{\gamma}\right)\leq\alpha$
a.s.. Since $\gamma^{\prime\prime}$ is arbitrarily close to $\gamma$,
we conclude that 
\[
\dim_{\mathcal{H}}\left(T_{\theta}^{\gamma}\right)\leq\nu\left(1-\gamma\right)\mbox{ a.s..}
\]

We have completed the proof of the upper bound (\ref{eq:upper bound}).
In addition, by the same argument as above, if $\gamma>1$, we can
choose $\gamma^{\prime\prime}$ to be greater than $1$, in which
case 
\[
\sum_{n\geq m}\mathbb{E}^{\mathcal{W}}\left[\mbox{card}\left(I_{n}\right)\right]\leq\sum_{n\geq m}\,2^{\nu\left(1-\gamma^{\prime\prime}\right)n}\rightarrow0\mbox{ as }m\rightarrow\infty.
\]
This observation immediately implies the last assertion in Theorem
\ref{thm:hausdorff dimension of thick point set}, i.e., if $\gamma>1$,
then $T_{\theta}^{\gamma}=\emptyset$ a.s..

\subsection{Perfect $\gamma-$thick point.}

In this subsection we explain why the definition (\ref{eq:def of thick point})
is more proper for the study of thick points for polynomial-correlated
GFFs. Simply speaking, the straightforward analogue of (\ref{eq:2D thick point}),
the thick point definition for log-correlated GFFs, imposes too strong
a condition to fulfill in the case of polynomial-correlated GFFs.
To make this precise, we first define a more strict analogue of (\ref{eq:2D thick point}).
\begin{defn}
Let $\gamma\geq0$. For each $\theta\in\Theta$, $x\in\overline{S\left(O,1\right)}$
is called a \emph{perfect} $\gamma-$\emph{thick point }of $\theta$
if
\[
\lim_{t\searrow0}\,\frac{\bar{\theta}_{t}\left(x\right)}{\sqrt{-G\left(t\right)\ln t}}=\sqrt{2\nu\gamma}.
\]

\end{defn}
Again, if $PT_{\theta}^{\gamma}$ is the set that contains all the
perfect $\gamma-$thick points of $\theta$, then $PT_{\theta}^{\gamma}$
is a measurable subset of $\overline{S\left(O,1\right)}$. To study
$PT_{\theta}^{\gamma}$, we follow a similar strategy as the one used
to establish the upper bound in $\mathsection4.1$. For each $n\geq0$,
set $s_{n}:=2^{-n^{2}}$. For $n\geq1$, consider the two-parameter
Gaussian family

\[
\mathcal{A}_{n}:=\left\{ \bar{\theta}_{t}\left(x\right):\, x\in\overline{S\left(O,1\right)},\, t\in[s_{n},\, s_{n-1}]\right\} .
\]
Let $\omega_{n}$ be the modulus of continuity of $\mathcal{A}_{n}$
under the intrinsic metric $d$, i.e., for every $\delta>0$, 
\[
\omega_{n}\left(\delta\right):=\sup\left\{ \left|\bar{\theta}_{t}\left(x\right)-\bar{\theta}_{s}\left(y\right)\right|:\, d\left(x,t;y,s\right)\leq\delta,\, x,y\in\overline{S\left(O,1\right)},\, t,s\in[s_{n},\, s_{n-1}]\right\} .
\]

\begin{lem}
There exists a constant $C_{\nu}>0$ such that for every $n\geq1$
and every $0<\delta\ll\sqrt{G\left(s_{n}\right)}$, 
\begin{equation}
\mathbb{E}^{\mathcal{W}}\left[\omega_{n}\left(\delta\right)\right]\leq C_{\nu}\cdot\delta\sqrt{\ln\left(s_{n}^{\left(3-2\nu\right)/4}/\delta\right)}.\label{eq:expectation of modulus of continuity}
\end{equation}
Moreover, if 
\[
\mathcal{B}_{n}:=\left\{ \left(x,\, y\right)\in\left(\overline{S\left(O,1\right)}\right)^{2}:\,\left|x-y\right|\leq\sqrt{\nu}\cdot s_{2n}\right\} ,
\]
then 
\begin{equation}
\mathcal{W}\left(\sup_{\left(x,\, y\right)\in\mathcal{B}_{n},\, t\in[s_{n},\, s_{n-1}]}\left|\frac{\bar{\theta}_{t}\left(x\right)}{D\left(t\right)}-\frac{\bar{\theta}_{t}\left(y\right)}{D\left(t\right)}\right|>2^{-3n^{2}/16},\;\mbox{i.o.}\right)=0.\label{eq:almost sure modulus}
\end{equation}
\end{lem}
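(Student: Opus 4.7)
My plan is to treat the two assertions sequentially, using standard Gaussian-process machinery adapted to the polynomial singularity of the covariance. For (\ref{eq:expectation of modulus of continuity}) I would invoke the Dudley entropy bound for the expected modulus of continuity of a centered Gaussian field: up to a universal constant $K$,
\[
\mathbb{E}^{\mathcal{W}}\!\left[\omega_n(\delta)\right] \leq K \int_0^{\delta} \sqrt{\ln N(\epsilon,\, \overline{S(O,1)} \times [s_n, s_{n-1}])}\, d\epsilon,
\]
where $N(\epsilon, \cdot)$ is the covering number under the intrinsic metric $d$. To estimate the entropy I would cover the parameter set as a product: on the spatial factor, Lemma~\ref{lem:estimate for d^2_t non-concentric} shows that at the worst-case radius $t = s_n$ a Euclidean ball of radius $\rho \leq C_\nu\, \epsilon^{4} s_n^{2\nu-3}$ has $d$-diameter at most $\epsilon/2$, while on the radial factor (\ref{eq:triangle for d(x,t;y,s)}) reduces the task to partitioning $[s_n, s_{n-1}]$ into intervals on which $G$ increases by at most $\epsilon^{2}/4$. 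This gives $\ln N(\epsilon, \cdot) \leq C_\nu \bigl[\ln(1/\epsilon) + \ln(1/s_n)\bigr]$; feeding this into the Dudley integral and using $\sqrt{a+b} \leq \sqrt{a}+\sqrt{b}$ produces contributions of the form $C_\nu \delta \sqrt{\ln(1/\delta)}$ and $C_\nu \delta \sqrt{\ln(1/s_n)}$, which consolidate into (\ref{eq:expectation of modulus of continuity}) after absorbing dimension-dependent factors into $C_\nu$.

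For (\ref{eq:almost sure modulus}) I would first compute the intrinsic diameter $\delta_n$ of the index set $\{(x,t;y,t) : (x,y)\in\mathcal{B}_n,\, t\in[s_n,s_{n-1}]\}$. Since $|x-y| \leq \sqrt{\nu}\, s_{2n}$ and the right-hand side of (\ref{eq:estimate intrinsic metric non-concentric}) is maximized at $t = s_n$, a direct computation using $s_{2n}/s_n = 2^{-3n^{2}}$ yields
\[
\delta_n^{2} \leq C_\nu\, s_n^{3/2 - \nu}\, s_{2n}^{1/2} = C_\nu\, 2^{n^{2}(\nu - 7/2)},
\]
which is super-exponentially small relative to $D(s_{n-1})^{2} \geq c_\nu\, n^{2}\, 2^{(n-1)^{2}(\nu-2)}$, so that $\delta_n/D(s_{n-1}) \leq C_\nu\, n^{-1}\, 2^{-3n^{2}/4 + \mathcal{O}(n)}$. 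Plugging $\delta_n$ into (\ref{eq:expectation of modulus of continuity}) yields $\ln(s_n^{(3-2\nu)/4}/\delta_n) \leq C_\nu\, n^{2}$, and hence
\[
\mathbb{E}^{\mathcal{W}}\!\left[\,\sup_{\substack{(x,y)\in\mathcal{B}_n \\ t\in[s_n,s_{n-1}]}} \left|\frac{\bar\theta_t(x)}{D(t)} - \frac{\bar\theta_t(y)}{D(t)}\right|\right] \leq \frac{\mathbb{E}^{\mathcal{W}}[\omega_n(\delta_n)]}{D(s_{n-1})} \leq C_\nu\, 2^{-3n^{2}/4 + \mathcal{O}(n)}.
\]

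Finally I would invoke the Borell--TIS concentration inequality for the centered Gaussian field indexed by $(x,y,t)$. Its maximal variance is bounded by $\delta_n^{2}/D(s_{n-1})^{2} \leq C_\nu\, n^{-2}\, 2^{-3n^{2}/2 + \mathcal{O}(n)}$, so for all large enough $n$ the gap between the threshold $2^{-3n^{2}/16}$ and the expected supremum is at least half of the threshold, and
\[
\mathcal{W}\!\left(\sup_{\substack{(x,y)\in\mathcal{B}_n \\ t\in[s_n,s_{n-1}]}} \left|\frac{\bar\theta_t(x)}{D(t)} - \frac{\bar\theta_t(y)}{D(t)}\right| > 2^{-3n^{2}/16}\right) \leq \exp\!\left(-c_\nu\, 2^{9 n^{2}/8 + \mathcal{O}(n)}\right),
\]
which is trivially summable in $n$; the first Borel--Cantelli lemma then yields (\ref{eq:almost sure modulus}). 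The main bookkeeping obstacle is to verify that the super-exponential smallness of $\delta_n$---engineered by the aggressive spatial gap $s_{2n} = 2^{-4n^{2}}$---survives both division by $D(s_{n-1})$ and the quartic-root inflation absorbed by the Dudley integral, still leaving a comfortable exponential margin over $2^{-3n^{2}/16}$. Once the exponents $(2\nu-7)/4$, $(\nu-2)/2$, $-3/4$, $-3/16$, and $9/8$ are lined up the conclusion is automatic.
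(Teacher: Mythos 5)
Your argument is correct and follows essentially the same route as the paper: for (\ref{eq:expectation of modulus of continuity}) you use Dudley's entropy bound with a product cover of $\overline{S(O,1)}\times[s_n,s_{n-1}]$, with the spatial entropy coming from Lemma \ref{lem:estimate for d^2_t non-concentric} and the radial entropy from partitioning $[s_n,s_{n-1}]$ by increments of $G$ via (\ref{eq:triangle for d(x,t;y,s)}), matching the paper's $L_\epsilon\cdot M_\epsilon$ decomposition; for (\ref{eq:almost sure modulus}) you plug in $\delta_n^2\le C_\nu s_n^{3/2-\nu}s_{2n}^{1/2}$, divide by $D(s_{n-1})$, and then apply Borel--Cantelli. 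The one cosmetic difference is that you invoke Borell--TIS to get a super-exponential tail, whereas the paper's terse ``apply Borel--Cantelli'' is an implicit Markov inequality (the expectation $\lesssim 2^{-3n^2/4+\mathcal{O}(n)}$ already beats the threshold $2^{-3n^2/16}$ by a summable margin $2^{-9n^2/16+\mathcal{O}(n)}$); both close the argument, yours with more overhead but a much stronger bound.
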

\begin{proof}
For every $\epsilon>0$, let $N\left(\epsilon,\mathcal{A}_{n}\right)$
be the entropy function, as introduced before Lemma \ref{lem:expectation of max non-concentric}.
Then, it follows from a similar argument as the one used in the proof
of Lemma \ref{lem:extreme value of two parameter family} that $N\left(\epsilon,\mathcal{A}_{n}\right)\leq L_{\epsilon}\cdot M_{\epsilon}$
where $L_{\epsilon}=\mathcal{O}\left(s_{n}^{\nu\left(3-2\nu\right)}\right)/\epsilon^{4\nu}$
is the entropy for the set $\overline{S\left(O,1\right)}$ under the
metric $d\left(\cdot,s_{n};\,*,s_{n}\right)$, and $M_{\epsilon}=\mathcal{O}\left(G\left(s_{n}\right)\right)/\epsilon^{2}$
is the entropy for the interval $[s_{n},s_{n-1}]$ corresponding to
the concentric process $\left\{ \bar{\theta}_{t}\left(x\right):t\in[s_{n},s_{n-1}]\right\} $
for any fixed $x$. Hence (\cite{AT}, Corollary 1.3.4), there exists
a universal constant $K>0$ such that for every $n\geq1$, 
\[
\begin{split}\mathbb{E}^{\mathcal{W}}\left[\omega_{n}\left(\delta\right)\right] & \leq K\int_{0}^{\delta}\sqrt{\ln N\left(\epsilon,\mathcal{A}_{n}\right)}d\epsilon\leq K\int_{0}^{\delta}\left(\sqrt{\ln L_{\epsilon}}+\sqrt{\ln M_{\epsilon}}\right)d\epsilon.\end{split}
\]
Similarly as the integrals we evaluated when proving (\ref{eq:expectation of sup spatial}),
we have that 
\[
\int_{0}^{\delta}\sqrt{\ln L_{\epsilon}}d\epsilon\leq C_{\nu}\cdot\delta\sqrt{\ln\left(s_{n}^{\left(3-2\nu\right)/4}/\delta\right)}
\]
and 
\[
\int_{0}^{\delta}\sqrt{\ln M_{\epsilon}}d\epsilon\leq C_{\nu}\cdot\delta\sqrt{\ln\left(s_{n}^{\left(1-\nu/2\right)}/\delta\right)}
\]
for some $C_{\nu}>0$, which lead to the first assertion.

To prove the second assertion, notice that by (\ref{eq:estimate intrinsic metric non-concentric}),
if $\left|x-y\right|\leq\sqrt{\nu}\cdot s_{2n}$, then for any $t\in\left[s_{n},\, s_{n-1}\right]$,
\[
d\left(x,t;\, y,t\right)\leq C_{\nu}\cdot t^{\left(3-2\nu\right)/4}\cdot s_{2n}^{1/4}\leq C_{\nu}\cdot2^{n^{2}\left(2\nu-7\right)/4}.
\]
Therefore, by (\ref{eq:expectation of modulus of continuity}), 
\[
\begin{split}\mathbb{E}^{\mathcal{W}}\left[\sup_{\left(x,\, y\right)\in\mathcal{B}_{n},\, t\in[s_{n},\, s_{n-1}]}\left|\bar{\theta}_{t}\left(x\right)-\bar{\theta}_{t}\left(y\right)\right|\right] & \leq\mathbb{E}^{\mathcal{W}}\left[\omega_{n}\left(C_{\nu}\cdot2^{n^{2}\left(\nu/2-7/4\right)}\right)\right]\\
 & \leq C_{\nu}\cdot2^{n^{2}\left(\nu/2-7/4\right)}\cdot n.
\end{split}
\]
The desired conclusion follows from dividing both sides of the inequality
above by $D\left(s_{n-1}\right)$ and applying the Borel-Cantelli
lemma.
\end{proof}
Now we are ready to establish the main result of this subsection,
which is, if $\gamma>0$, then the perfect $\gamma-$thick point doesn't
exist almost surely. Being ``perfect'' prevents such points from
existing.
\begin{thm}
\label{thm: no perfect thick piont}If $\gamma>0$, then $PT_{\theta}^{\gamma}=\emptyset$
a.s..\end{thm}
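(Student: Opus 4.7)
The plan is to discretise the problem at the sparse scales $s_n=2^{-n^2}$, exploit the backward independent-increment structure of the concentric process $\{\bar\theta_{s_n}(x)\}_{n\geq 0}$ recorded in the Remark of $\mathsection3.1$, and conclude by a union bound on a uniform lattice. Fix $\gamma>0$ and a small $\epsilon>0$. For $m\geq 1$ and $K\geq 1$, let $\mathcal{G}_{m+K}\subset\overline{S(O,1)}$ be a lattice of spacing $s_{2(m+K)}$, so that $\#\mathcal{G}_{m+K}=O(s_{2(m+K)}^{-\nu})=O(2^{4\nu(m+K)^2})$. If $x\in PT_\theta^\gamma$, then $|\bar\theta_{s_n}(x)/D(s_n)-\sqrt{2\nu\gamma}|\leq\epsilon$ for all sufficiently large $n$; combined with~(\ref{eq:almost sure modulus}), almost surely for all large enough $m$ the nearest lattice point $x^*\in\mathcal{G}_{m+K}$ to $x$ satisfies
\[
\bigl|\bar\theta_{s_n}(x^*)/D(s_n)-\sqrt{2\nu\gamma}\bigr|\leq 2\epsilon\quad\text{for every }n=m,m+1,\ldots,m+K.
\]

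I next estimate the $\mathcal{W}$-probability that a given $x^*$ satisfies all $K+1$ of these conditions. Setting $Z_n:=\bar\theta_{s_n}(x^*)-\bar\theta_{s_{n-1}}(x^*)$, the Remark in $\mathsection3.1$ shows that the $Z_n$ are independent centred Gaussians with $\mathrm{Var}(Z_n)=G(s_n)-G(s_{n-1})$. Since $G(t)=\mathcal{O}(t^{2-\nu})$ as $t\searrow 0$, the ratio $G(s_{n-1})/G(s_n)$ decays exponentially; hence, conditionally on the level-$(n-1)$ condition holding (which forces $|\bar\theta_{s_{n-1}}(x^*)|=O(n\sqrt{G(s_{n-1})})=o(\sqrt{G(s_n)})$), the level-$n$ condition reduces to requiring that $Z_n/\sqrt{G(s_n)}$ fall in a window of width $O(\epsilon n)$ around $n\sqrt{2\nu\gamma\ln 2}+o(1)$. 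The standard Gaussian tail bound gives a conditional probability at most $C_\nu\epsilon\cdot n\cdot s_n^{\nu\gamma'}$, where $\gamma'>0$ can be made arbitrarily close to $\gamma$ by choosing $\epsilon$ small. By independence of the $Z_n$,
\[
\mathcal{W}\bigl(\text{all }K+1\text{ conditions on }x^*\bigr)\leq (C_\nu\epsilon)^{K+1}\prod_{n=m}^{m+K}n\cdot s_n^{\nu\gamma'}.
\]

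Multiplying by the $O(2^{4\nu(m+K)^2})$ choices of $x^*$ and using $\sum_{n=m}^{m+K}n^2=\tfrac13(m+K)^3+O((m+K)^2)$, the union bound yields
\[
\mathcal{W}\bigl(\exists\,x^*\in\mathcal{G}_{m+K}\text{ satisfying all conditions}\bigr)\leq 2^{-\frac{\nu\gamma'}{3}(m+K)^3+O((m+K)^2)},
\]
which tends to $0$ as $K\to\infty$ for each fixed $m$. As the existence of a perfect thick point (with detection threshold $\geq m$) triggers this event for every $K$, the above inequality forces the probability to be zero for each $m$, and a countable union over $m$ concludes $PT_\theta^\gamma=\emptyset$ almost surely.

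The key obstacle is the uniformity step: transferring the perfect-thick-point condition at an unknown $x$ to the deterministic lattice point $x^*$ simultaneously at all scales $n=m,\ldots,m+K$. This hinges on the rapid super-polynomial decay $2^{-3n^2/16}$ in~(\ref{eq:almost sure modulus}), which absorbs the spatial approximation error at every level, and on the polynomial singularity $G(t)\sim t^{2-\nu}$, which makes the low-frequency ``history'' $\bar\theta_{s_{n-1}}(x^*)$ genuinely negligible relative to the ``new'' increment $Z_n$, so that the independent Gaussian tail estimates across scales really do multiply.
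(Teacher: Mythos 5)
Your proof is correct, but it takes a genuinely different route from the paper. The paper works at a single discrete scale $n$: it imposes the continuous-time ``straying band'' condition
\[
\left|\bar{\theta}_{t}\left(x\right)-\bar{\theta}_{s_{n-1}}\left(x\right)-\sqrt{2\nu\gamma}\left[D\left(t\right)-D\left(s_{n-1}\right)\right]\right|\leq\frac{2D\left(t\right)}{M}\quad\text{for all }t\in\left[s_{n},s_{n-1}\right],
\]
then uses the Cameron--Martin formula to shift away the drift, where the key computation is that the Cameron--Martin norm $\left\Vert h_{\zeta_{x,n}}\right\Vert _{H}^{2}=\int_{s_{n}}^{s_{n-1}}\left(-D^{\prime}\right)F\,dt=\mathcal{O}\left(n^{3}\right)$, producing an $\exp\left(-n^{3}/M\right)$ bound that beats the lattice count $2^{4\nu n^{2}}$ level by level. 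You instead chain $K+1$ consecutive single-scale conditions and use only elementary Gaussian tail estimates on the independent increments $Z_{n}=\bar{\theta}_{s_{n}}-\bar{\theta}_{s_{n-1}}$: each scale contributes $\approx s_{n}^{\nu\gamma^{\prime}}=2^{-\nu\gamma^{\prime}n^{2}}$, and $\sum_{n=m}^{m+K}n^{2}\approx\left(m+K\right)^{3}/3$ again beats $2^{4\nu\left(m+K\right)^{2}}$. Both approaches produce the same ``cube dominates square'' comparison, and both rely crucially on the polynomial singularity of $G$ (so that $G\left(s_{n-1}\right)/G\left(s_{n}\right)$ decays exponentially, making the Markov history negligible and the increment tail bounds effectively multiply); for a log-correlated field the ratio would tend to $1$ and your chain argument would collapse, consistent with the fact that perfect thick points \emph{do} exist in that setting. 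Your approach is more elementary in that it avoids the Cameron--Martin computation entirely, at the cost of tracking the multi-scale chain explicitly; the paper's approach is more compact per level. One phrasing point worth tightening: rather than ``conditioning on the level-$\left(n-1\right)$ condition holding,'' the rigorous statement is that $\mathbb{E}\left[\mathbf{1}_{A_{n}}\mid\mathcal{F}_{n-1}\right]$ is, by the Markov property, a function of $\bar{\theta}_{s_{n-1}}\left(x^{*}\right)$ alone, and you bound it uniformly over the range of $\bar{\theta}_{s_{n-1}}\left(x^{*}\right)$ allowed by $A_{n-1}$, then peel off one factor at a time. This is what your argument implicitly does and it is sound.
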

\begin{proof}
Based on (\ref{eq:almost sure modulus}), for $\mathcal{W}-$a.e.
$\theta$, there exists $N_{\theta}\in\mathbb{N}$ such that for every
$n\geq N_{\theta}$ and $x,y$ such that $y\in\overline{S\left(x,s_{2n}\right)}$,
\[
\sup_{t\in[s_{n},\, s_{n-1}]}\left|\frac{\bar{\theta}_{t}\left(x\right)}{D\left(t\right)}-\frac{\bar{\theta}_{t}\left(y\right)}{D\left(t\right)}\right|\leq2^{-3n^{2}/16}.
\]
Choose $M>0$ to be a sufficiently large constant. Consider the lattice
partition of $\overline{S\left(O,1\right)}$ with cell size $2^{-k}$,
and let 
\[
\left\{ x_{j}^{(k)}:\, j=1,\cdots,J_{k}\right\} 
\]
be the cell centers. Let $y$ be a perfect $\gamma-$thick point.
For $n$ that is sufficiently large, if $x_{j}^{(4n^{2})}$ is the
cell center such that $y\in\overline{S\left(x_{j}^{(4n^{2})},s_{2n}\right)}$,
then 
\[
\sup_{t\in[s_{n},\, s_{n-1}]}\left|\frac{\bar{\theta}_{t}\left(x_{j}^{(4n^{2})}\right)}{D\left(t\right)}-\sqrt{2\nu\gamma}\right|\leq\frac{1}{M}.
\]
In particular, this means that 
\[
\left|\bar{\theta}_{s_{n-1}}\left(x_{j}^{(4n^{2})}\right)-\sqrt{2\nu\gamma}D\left(s_{n-1}\right)\right|\leq\frac{D\left(s_{n-1}\right)}{M},
\]
and for every $t\in\left[s_{n},\, s_{n-1}\right]$ , 
\[
\left|\bar{\theta}_{t}\left(x_{j}^{(4n^{2})}\right)-\bar{\theta}_{s_{n-1}}\left(x_{j}^{(4n^{2})}\right)-\sqrt{2\nu\gamma}\left[D\left(t\right)-D\left(s_{n-1}\right)\right]\right|\leq\frac{2D\left(t\right)}{M}.
\]

Let $\mathcal{P}_{n}=\mathcal{P}_{n}^{x_{j}^{(4n^{2})}}\subseteq\Theta$
be the collection of all the field elements $\theta$ such that 
\[
\forall t\in\left[s_{n},\, s_{n-1}\right],\left|\bar{\theta}_{t}\left(x_{j}^{(4n^{2})}\right)-\bar{\theta}_{s_{n-1}}\left(x_{j}^{(4n^{2})}\right)-\sqrt{2\nu\gamma}\left[D\left(t\right)-D\left(s_{n-1}\right)\right]\right|\leq\frac{2D\left(t\right)}{M}.
\]
Clearly, $\mathcal{P}_{n}$ is a measurable set and $\mathcal{W}\left(\mathcal{P}_{n}\right)$
doesn't depend on $x_{j}^{(4n^{2})}$. To simplify the notation, we
will write ``$x_{j}^{(4n^{2})}$'' as ``$x$'' throughout this
proof. The idea is to rewrite $\mathcal{P}_{n}$ in terms of a shifted
$\theta$ and apply the Cameron-Martin formula. To this end, we define,
for $t\in(0,1]$,
\[
F\left(t\right):=\frac{D^{\prime}\left(t\right)}{G^{\prime}\left(t\right)}\mbox{ and }f\left(t\right):=F^{\prime}\left(t\right),
\]
and let $\zeta_{x,n}$ be the element in $H^{-1}\left(\mathbb{R}^{\nu}\right)$
such that its corresponding Paley-Wiener integral is 
\[
\mathcal{I}\left(h_{\zeta_{x,n}}\right)\left(\theta\right)=\int_{s_{n}}^{s_{n-1}}\left(\bar{\theta}_{t}\left(x\right)-\bar{\theta}_{s_{n-1}}\left(x\right)\right)f\left(t\right)dt+F\left(s_{n}\right)\left(\bar{\theta}_{s_{n}}\left(x\right)-\bar{\theta}_{s_{n-1}}\left(x\right)\right).
\]
We observe that for every $t\in\left[s_{n},\, s_{n-1}\right]$, 
\[
\begin{split}\left(h_{\bar{\sigma}_{x,t}},\, h_{\zeta_{x,n}}\right)_{H} & =\int_{s_{n}}^{t}\left[G\left(t\right)-G\left(s_{n-1}\right)\right]f\left(s\right)ds+\int_{t}^{s_{n-1}}\left[G\left(s\right)-G\left(s_{n-1}\right)\right]f\left(s\right)ds\\
 & \qquad\qquad\qquad\qquad\qquad\qquad\qquad\qquad\qquad+F\left(s_{n}\right)\left[G\left(t\right)-G\left(s_{n-1}\right)\right]\\
 & =\left[G\left(t\right)-G\left(s_{n-1}\right)\right]F\left(t\right)+\int_{t}^{s_{n-1}}\left[G\left(s\right)-G\left(s_{n-1}\right)\right]f\left(s\right)ds\\
 & =-\int_{t}^{s_{n-1}}G^{\prime}\left(s\right)F\left(s\right)ds=-\int_{t}^{s_{n-1}}D^{\prime}\left(s\right)ds\\
 & =D\left(t\right)-D\left(s_{n-1}\right).
\end{split}
\]
Therefore,
\[
\mathcal{I}\left(h_{\bar{\sigma}_{x,t}}-h_{\bar{\sigma}_{x,s_{n-1}}}\right)\left(\theta-\sqrt{2\nu\gamma}h_{\zeta_{x,n}}\right)=\bar{\theta}_{t}\left(x\right)-\bar{\theta}_{s_{n-1}}\left(x\right)-\sqrt{2\nu\gamma}\left[D\left(t\right)-D\left(s_{n-1}\right)\right].
\]
 In other words, we can view 
\[
\bar{\theta}_{t}\left(x\right)-\bar{\theta}_{s_{n-1}}\left(x\right)-\sqrt{2\nu\gamma}\left[D\left(t\right)-D\left(s_{n-1}\right)\right]
\]
as a Paley-Wiener integral of a translated GFF. Thus, by the Cameron-Martin
formula (\cite{probability}, Theorem 8.2.9), $\mathcal{W}\left(\mathcal{P}_{n}\right)$
is equal to 
\[
\begin{split} & \mathbb{E}^{\mathcal{W}}\left[e^{-\sqrt{2\nu\gamma}\mathcal{I}\left(h_{\zeta_{x,n}}\right)-\nu\gamma\left\Vert h_{\zeta_{x,n}}\right\Vert _{H}^{2}};\,\forall t\in\left[s_{n},\, s_{n-1}\right],\left|\bar{\theta}_{t}\left(x\right)-\bar{\theta}_{s_{n-1}}\left(x\right)\right|\leq\frac{2D\left(t\right)}{M}\right]\\
\le\; & e^{-\nu\gamma\left\Vert h_{\zeta_{x,n}}\right\Vert _{H}^{2}}\cdot\exp\left\{ \frac{2\sqrt{2\nu\gamma}}{M}\left[\int_{s_{n}}^{s_{n-1}}D\left(t\right)f\left(t\right)dt+F\left(s_{n}\right)D\left(s_{n}\right)\right]\right\} \\
=\; & e^{-\nu\gamma\left\Vert h_{\zeta_{x,n}}\right\Vert _{H}^{2}}\cdot\exp\left\{ \frac{2\sqrt{2\nu\gamma}}{M}\left[\int_{s_{n}}^{s_{n-1}}\left(-D^{\prime}\left(t\right)\right)F\left(t\right)dt+F\left(s_{n-1}\right)D\left(s_{n-1}\right)\right]\right\} .
\end{split}
\]
Moreover, we compute 
\[
\begin{split}\left\Vert h_{\zeta_{x,n}}\right\Vert _{H}^{2} & =\int_{s_{n}}^{s_{n-1}}\left[D\left(t\right)-D\left(s_{n-1}\right)\right]f\left(t\right)dt+F\left(s_{n}\right)\left[D\left(s_{n}\right)-D\left(s_{n-1}\right)\right]\\
 & =\int_{s_{n}}^{s_{n-1}}\left(-D^{\prime}\left(t\right)\right)F\left(t\right)dt.
\end{split}
\]
It is easy to verify that, when $n$ is large, 
\[
\int_{s_{n}}^{s_{n-1}}\left(-D^{\prime}\left(t\right)\right)F\left(t\right)dt=\mathcal{O}\left(n^{3}\right)\mbox{ and }F\left(s_{n-1}\right)D\left(s_{n-1}\right)=\mathcal{O}\left(n^{2}\right).
\]
Thus, $\mathcal{W}\left(\mathcal{P}_{n}\right)$ is no greater than
\[
\begin{split} & \exp\left[\left(-\nu\gamma+\frac{2\sqrt{2\nu\gamma}}{M}\right)\mathcal{O}\left(n^{3}\right)+\frac{2\sqrt{2\nu\gamma}}{M}\mathcal{O}\left(n^{2}\right)\right]\leq\exp\left(-n^{3}/M\right)\end{split}
\]
when $M$ is sufficiently large.

To complete the proof, we repeat the arguments that lead to the last
assertion in Theorem \ref{thm:hausdorff dimension of thick point set}.
Because 
\[
\left\{ \theta\in\Theta:\, PT_{\theta}^{\gamma}\neq\emptyset\right\} \subseteq\bigcap_{m\geq1}\,\bigcup_{n\geq m}\,\bigcup_{\mbox{cell center }x_{j}^{(4n^{2})}}\,\mathcal{P}_{n}^{x_{j}^{(4n^{2})}},
\]
and the probability of the RHS is no greater than 
\[
\lim_{m\rightarrow\infty}\sum_{n\geq m}\,2^{4\nu n^{2}}e^{-n^{3}/M}=0,
\]
$PT_{\theta}^{\gamma}$ is the empty set with probability one.
\end{proof}

\section{Proof of the Lower Bound}

In this section we will provide a proof of the lower bound (\ref{eq:lower bound}).
The strategy is to study the convergence of $\bar{\theta}_{t}\left(x\right)/D\left(t\right)$
as $t\searrow0$ along a prefixed sequence that decays to zero sufficiently
fast. To be specific, assume that $\left\{ r_{n}:\, n\geq0\right\} $
is a sequence of positive numbers satisfying that $r_{0}=1$, $r_{n}\searrow0$
as $n\nearrow\infty$, and 
\begin{equation}
\lim_{n\rightarrow\infty}\frac{n^{2}\cdot\ln r_{n-1}}{\ln r_{n}}=0.\label{eq:condition on r_n}
\end{equation}

\begin{defn}
Let $\gamma\geq0$. For each $\theta\in\Theta$, $x\in\overline{S\left(O,1\right)}$
is called a \emph{sequential} $\gamma-$\emph{thick point }of $\theta$
with the sequence $\left\{ r_{n}:n\ge0\right\} $ if
\[
\lim_{n\rightarrow\infty}\,\frac{\bar{\theta}_{r_{n}}\left(x\right)}{\sqrt{-G\left(r_{n}\right)\ln r_{n}}}=\sqrt{2\nu\gamma}.
\]
With any sequence $\left\{ r_{n}:n\geq0\right\} $ as described above
fixed, we denote by $ST_{\theta}^{\gamma}$ the collection of all
the sequential $\gamma-$thick points of $\theta$ with $\left\{ r_{n}:n\geq0\right\} $.
$ST_{\theta}^{\gamma}$ is a measurable subset of $\overline{S\left(O,1\right)}$.
In this section we will prove the following result.\end{defn}
\begin{thm}
\label{thm:thick point along sequence}For $\gamma\in\left[0,1\right]$,
\[
\dim_{\mathcal{H}}\left(ST_{\theta}^{\gamma}\right)=\nu\left(1-\gamma\right)\mbox{ a.s.}.
\]
Moreover, for $\mathcal{W}-$a.e. $\theta\in\Theta$, $x\in ST_{\theta}^{0}$
for Lebesgue-a.e. $x\in\overline{S\left(O,1\right)}$. 

On the other hand, for $\gamma>1$, $ST_{\theta}^{\gamma}=\emptyset$
a.s..
\end{thm}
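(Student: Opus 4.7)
The plan begins by reducing most of the statement to earlier results and treating the nontrivial part, which is the lower bound on $\dim_\mathcal{H}(ST_\theta^\gamma)$, via a multiplicative-chaos construction adapted to the fast-decaying sequence $\{r_n\}$.

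The upper bound and the trivial cases are essentially free. Convergence of $\bar\theta_{r_n}(x)/D(r_n)$ to $\sqrt{2\nu\gamma}$ forces $\limsup_{t\searrow0}\bar\theta_t(x)/D(t) \geq \sqrt{2\nu\gamma}$, so $ST_\theta^\gamma \subseteq T_\theta^\gamma$. The bound $\dim_\mathcal{H}(ST_\theta^\gamma) \leq \nu(1-\gamma)$ a.s.\ and the emptiness for $\gamma > 1$ are then inherited from Theorem \ref{thm:hausdorff dimension of thick point set}. For $\gamma = 0$ I would fix $x$ and apply the law of the iterated logarithm (\ref{eq:LIL for concentric}) to the time-changed Brownian motion $\{\bar\theta_t(x):t\in(0,1]\}$: since $G(r_n) = \mathcal{O}(r_n^{2-\nu})$, one gets $|\bar\theta_{r_n}(x)|/D(r_n) = O\bigl(\sqrt{\ln\ln G(r_n)/(-\ln r_n)}\bigr) \to 0$ a.s. Fubini then yields that, for $\mathcal{W}$-a.e.\ $\theta$, Lebesgue-a.e.\ $x \in \overline{S(O,1)}$ lies in $ST_\theta^0$, so $\dim_\mathcal{H}(ST_\theta^0) = \nu$ a.s.

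For the lower bound with $\gamma \in (0,1]$ I would follow the Kahane--Peres--Hu--Miller multiplicative-chaos strategy from \cite{HMP} adapted to the polynomial-correlated setting. Exploiting the independence of the increments $Y_k(x) := \bar\theta_{r_k}(x) - \bar\theta_{r_{k-1}}(x)$ (a consequence of the backward-Markov property from $\mathsection3.1$), set $\Gamma_k := G(r_k) - G(r_{k-1})$ and $\alpha_k := \sqrt{2\nu\gamma}\,(D(r_k) - D(r_{k-1}))/\Gamma_k$ and define
\begin{equation*}
M_n(x) := \prod_{k=1}^n \exp\bigl(\alpha_k Y_k(x) - \tfrac{1}{2}\alpha_k^2 \Gamma_k\bigr),\qquad \mu_n(dx) := M_n(x)\mathbf{1}_{\overline{S(O,1)}}(x)\,dx.
\end{equation*}
Each $\{M_n(x)\}_{n \geq 1}$ is an exponential martingale with mean one, and the choice of $\alpha_k$ encodes the Cameron--Martin shift that translates the mean of $\bar\theta_{r_n}(x)$ by exactly $\sqrt{2\nu\gamma}D(r_n)$. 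A Peyri\`ere-type size-biasing argument, together with the convergence of $\{\mu_n\}$, then shows that any weak limit $\mu_\infty$ is supported on the set of $x$ for which $\bar\theta_{r_n}(x)/D(r_n) \to \sqrt{2\nu\gamma}$, i.e.\ on $ST_\theta^\gamma$.

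To conclude via Frostman's lemma I would bound the $\alpha$-energy $\mathbb{E}\bigl[\int\int|x-y|^{-\alpha}\mu_n(dx)\mu_n(dy)\bigr]$ uniformly in $n$ for every $\alpha < \nu(1-\gamma)$. Writing $\mathbb{E}[M_n(x)M_n(y)] = \exp\bigl(\sum_k \alpha_k^2 \mathrm{Cov}(Y_k(x),Y_k(y))\bigr)$ and splitting the sum at the ``last common scale'' $N(x,y)$ defined by $r_{N(x,y)} < |x-y| \leq r_{N(x,y)-1}$, one uses Lemma \ref{lem:asymptotics of the cov functions} and Lemma \ref{lem:estimate for d^2_t non-concentric} to estimate each covariance, and condition (\ref{eq:condition on r_n}) to see that only the scale $k = N(x,y)$ contributes significantly. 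The main obstacle is that this plain $L^2$ argument only yields the lower bound $\nu(1-2\gamma)$, which is useless once $\gamma > 1/2$; one must therefore replace $M_n(x)$ by the truncated martingale $\widetilde M_n(x) := M_n(x)\mathbf{1}_{G_n(x)}$ restricted to the good event $G_n(x) := \{|\bar\theta_{r_k}(x)/D(r_k) - \sqrt{2\nu\gamma}| < \epsilon_k\text{ for all } k \leq n\}$ for a carefully chosen sequence $\epsilon_k \searrow 0$. The truncation removes the rare very-large-value contributions that blow up the $L^2$ norm while, by a large-deviations estimate that again exploits the dominance of the $n$-th increment guaranteed by (\ref{eq:condition on r_n}), preserving essentially all of the total mass. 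With the truncated measure in hand the energy bound closes for every $\alpha < \nu(1-\gamma)$ and Frostman's lemma delivers $\dim_\mathcal{H}(ST_\theta^\gamma) \geq \nu(1-\gamma)$ a.s.
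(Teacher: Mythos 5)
Your overall architecture is a legitimate variant of what the paper does, but two substantive pieces are missing, and it's worth comparing the measure constructions.

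\textbf{Route comparison.} The paper does not use an exponential multiplicative-chaos weight. Instead (Steps 1--3 of $\mathsection5$) it builds the atomic-style measure
\[
\mu_{n,\theta}(A)=\frac{1}{K_n}\sum_{j=1}^{K_n}\frac{\mathbb{I}_{\Xi_{n,\theta}}\bigl(x_j^{(n)}\bigr)}{\mathcal{W}\bigl(\Phi_{x_j^{(n)},n}\bigr)}\frac{\mathrm{vol}\bigl(A\cap S\bigl(x_j^{(n)},r_n\bigr)\bigr)}{\mathrm{vol}\bigl(S\bigl(x_j^{(n)},r_n\bigr)\bigr)},
\]
where $\Phi_{x,n}=\bigcap_{i=0}^n P_{x,i}$ is a tube event requiring each increment $\Delta\bar\theta_i(x)$ to land within $\pm\sqrt{\Delta G_i}$ of the drift $\sqrt{2\nu\gamma}\,\Delta D_i$. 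This is already the ``truncated'' object: the indicator replaces the exponential tilt and there is never an untruncated chaos to worry about, so the $L^2$ calculation never sees the $\nu(1-2\gamma)$ barrier. Your construction (exponential martingale $M_n$ then truncation to $G_n(x)$) is Girsanov-equivalent to the paper's and, carried out correctly, should give the same answer; what it buys you is the transparent interpretation that $\alpha_k$ is the Cameron--Martin drift. What the paper's version buys is that $\mathcal{W}(\Phi_{x,n})$ is a pure probability, with two-sided bounds (Lemma \ref{lem:analysis of the probability}), rather than an expectation of a truncated exponential, which keeps the bookkeeping cleaner.

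\textbf{Gap 1: the energy estimate is not a single-scale computation.} You assert that ``only the scale $k=N(x,y)$ contributes significantly.'' That is the guiding heuristic, but it is exactly where the hard work lies, and a bare appeal to (\ref{eq:condition on r_n}) will not close it. The increments $\Delta\bar\theta_l(x)$ and $\Delta\bar\theta_{l'}(y)$ are \emph{not} decorrelated whenever $l,l'$ are both at or adjacent to the crossover scale $i$: the paper has to isolate four regimes according to where $|x-y|$ sits relative to $2r_{i+1}$, $r_{i+1}^{1-\epsilon}$, $r_i\pm r_{i+2}$, $r_i\pm r_{i+1}$, $2r_i$, and in the delicate middle regimes it proves Lemmas \ref{lem:estimate of prob of i+1,i+2,i+1,i+2} and the companion lemma in Case 4 via multi-step Gaussian conditioning to bound the joint probability of the events at scales $i$, $i+1$, $i+2$ at both centers. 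Moreover, the $\alpha$-energy computation in Step 4 has an additional obstruction in Case (4a) that does not appear in the second-moment bound: the annulus $(r_{i+1}^{1-\epsilon},\,r_i-r_{i+1}]$ spans too many orders of magnitude and must itself be decomposed into the geometric sub-annuli $R_m:=r_{i+1}^{(1-\gamma)^m}r_i^{1-(1-\gamma)^m}$ before the sum converges for all $\alpha<\nu(1-\gamma)$, using assumption (\ref{eq:condition on r_n (2)}). None of this is anticipated in your sketch, and without it the energy bound does not actually close.

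\textbf{Gap 2: from positive probability to almost sure.} The second-moment method plus Frostman gives $\mathcal{W}\bigl(\dim_{\mathcal{H}}(ST_\theta^\gamma)\geq\alpha\bigr)\geq c>0$ for a constant $c$ depending on $A_1,A_2$; it does not directly give the a.s.\ statement. The paper upgrades this by expanding $\theta=\sum_n\mathcal{I}(h_n)(\theta)h_n$ as in (\ref{eq:H_basis expansion}) and invoking the Hewitt--Savage zero-one law for the i.i.d.\ coefficients $\mathcal{I}(h_n)$, since the event $\{\dim_{\mathcal{H}}(ST_\theta^\gamma)\geq\alpha\}$ is exchangeable. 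Your proposal asserts ``a.s.'' at the end but supplies no mechanism for this upgrade; it needs to be added.

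Your reduction of the upper bound and the $\gamma>1$ case to Theorem \ref{thm:hausdorff dimension of thick point set}, and the $\gamma=0$ argument via the law of the iterated logarithm (\ref{eq:LIL for concentric}) plus Fubini, are both correct and are essentially the paper's argument.
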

Since $ST_{\theta}^{\gamma}\subseteq T_{\theta}^{\gamma}$, the established
upper bounds of the size of $T_{\theta}^{\gamma}$ also apply to $ST_{\theta}^{\gamma}$,
i.e., $\dim_{\mathcal{H}}\left(ST_{\theta}^{\gamma}\right)\leq\nu\left(1-\gamma\right)$
a.s. for $\gamma\in\left[0,1\right]$, and $ST_{\theta}^{\gamma}=\emptyset$
a.s. for $\gamma>1$. As for the lower bound, when $\gamma=0$, (\ref{eq:LIL for concentric})
implies that $\mathcal{W}\left(x\in ST_{\theta}^{0}\right)=1$ for
every $x\in\overline{S\left(O,1\right)}$. Let $\mu_{Leb}$ be the
Lebesgue measure on $\mathbb{R}^{\nu}$, and $\mathcal{H}^{\nu}$
the $\nu-$dimensional Hausdorff measure on $\mathbb{R}^{\nu}$. Then,
$\mathcal{H}^{\nu}=C_{\nu}\mu_{Leb}$ for a dimensional constant $C_{\nu}>0$.
By Fubini's theorem, 
\[
\begin{split}\mathbb{E}^{\mathcal{W}}\left[\mathcal{H}^{\nu}\left(ST_{\theta}^{0}\right)\right] & =C_{\nu}\int_{\overline{S\left(O,1\right)}}\mathcal{W}\left(x\in ST_{\theta}^{0}\right)\mu_{Leb}\left(dx\right)=\mathcal{H}^{\nu}\left(\overline{S\left(O,1\right)}\right).\end{split}
\]
Since $\mathcal{H}^{\nu}\left(ST_{\theta}^{0}\right)\leq\mathcal{H}^{\nu}\left(\overline{S\left(O,1\right)}\right)$
a.s., they must be equal a.s., which implies that for $\mu_{Leb}-$a.e.
$x\in\overline{S\left(O,1\right)}$, $x\in ST_{\theta}^{0}$ and hence
$x\in T_{\theta}^{0}$. Thus, it is sufficient to derive the lower
bound of $\dim_{\mathcal{H}}\left(ST_{\theta}^{\gamma}\right)$ for
$\gamma\in\left(0,1\right)$. 
\begin{rem}
One example of a sequence satisfying (\ref{eq:condition on r_n})
is $r_{n}=2^{-2^{n^{2}}+1}$ for $n\geq0$. However, the method explained
in this section applies to any sufficiently fast decaying sequence.
On the other hand, for technical reasons, we will assume that 
\begin{equation}
\ln\left(-\ln r_{n+1}\right)=o\left(-\ln r_{n}\right)\mbox{ for all large }n\mbox{'s}.\label{eq:condition on r_n (2)}
\end{equation}
This assumption will not reduce the generality of the method. If a
given sequence $\left\{ r_{n}:n\geq0\right\} $ does not satisfy (\ref{eq:condition on r_n (2)}),
one can always ``fill in'' more numbers to get a new sequence $\left\{ \tilde{r}_{m}:m\geq0\right\} $
that satisfied both (\ref{eq:condition on r_n}) and (\ref{eq:condition on r_n (2)}),
and the original sequence $\left\{ r_{n}:n\geq0\right\} $ is a subsequence
of $\left\{ \tilde{r}_{m}:m\geq0\right\} $. Then, if we establish
a lower bound of $\dim_{\mathcal{H}}\left(ST_{\theta}^{\gamma}\right)$
with $\left\{ \tilde{r}_{m}:m\geq0\right\} $, the lower bound also
applies with any subsequence of $\left\{ \tilde{r}_{m}:m\geq0\right\} $. 

The advantage of studying sequential thick points is that the same
method can be applied to the study of other problems related to the
geometry of GFFs, when convergence along sequence already gives rise
to interesting objects (e.g., random measures concerned in \cite{CJ,DS1,JJRV}),
especially in the absence of the perfect $\gamma-$thick point as
pointed out in Theorem \ref{thm: no perfect thick piont}.
\end{rem}
Let $\gamma\in\left(0,1\right)$ be fixed. We will obtain the lower
bound of $\dim_{\mathcal{H}}\left(ST_{\theta}^{\gamma}\right)$ in
multiple steps. To simplify the notation, for every $\theta\in\Theta$,
$x\in\overline{S\left(O,1\right)}$ and $n\geq1$, we write 
\[
\Delta\bar{\theta}_{n}\left(x\right):=\bar{\theta}_{r_{n}}\left(x\right)-\bar{\theta}_{r_{n-1}}\left(x\right),
\]
\[
\Delta G_{n}:=G\left(r_{n}\right)-G\left(r_{n-1}\right)\mbox{ and }\Delta D_{n}:=D\left(r_{n}\right)-D\left(r_{n-1}\right).
\]
For each $x\in\overline{S\left(O,1\right)}$, we define the following
measurable subsets of $\Theta$:
\[
P_{x,0}:=\left\{ \theta\in\Theta:\,\left|\bar{\theta}_{0}\left(x\right)-\sqrt{2\nu\gamma}D\left(r_{0}\right)\right|\leq\sqrt{G\left(r_{0}\right)}\right\} ,
\]
and for $n\ge1$, 
\[
P_{x,n}:=\left\{ \theta\in\Theta:\,\left|\Delta\bar{\theta}_{n}\left(x\right)-\sqrt{2\nu\gamma}\Delta D_{n}\right|\leq\sqrt{\Delta G_{n}}\right\} \mbox{ and }\Phi_{x,n}:=\left(\bigcap_{i=0}^{n}P_{x,i}\right).
\]

\subsection*{Step 1: Derive the probability estimates.}

Let $x\in\overline{S\left(O,1\right)}$ be fixed. It's clear that
$\left\{ \bar{\theta}_{0}\left(x\right),\,\Delta\bar{\theta}_{n}\left(x\right),\, n\geq1\right\} $
is a family of independent Gaussian random variables. The following
simple facts about $P_{x,n}$ and $\Phi_{x,n}$ are in order.
\begin{lem}
\label{lem:analysis of the probability}$P_{x,i}$, $i=0,1,\cdots,n$,
are mutually independent. Moreover, there exists a constant $C_{\nu}>0$
such that for every $n\geq1$, 
\begin{equation}
e^{\nu\gamma\ln r_{n}-C_{\nu}\sqrt{-\ln r_{n}}}\leq\mathcal{W}\left(P_{x,n}\right)\leq e^{\nu\gamma\ln r_{n}+C_{\nu}\sqrt{-\ln r_{n}}}\label{eq: estimate for P_x,n}
\end{equation}
and 
\begin{equation}
e^{\nu\gamma\ln r_{n}\left(1+C_{\nu}/n\right)}\leq\mathcal{W}\left(\Phi_{x,n}\right)\leq e^{\nu\gamma\ln r_{n}\left(1-C_{\nu}/n\right)}.\label{eq: estimate for Phi_x,n}
\end{equation}

\end{lem}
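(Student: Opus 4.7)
The plan is to reduce both probability estimates to standard Gaussian tail computations, exploiting the independent-increment structure of $\{\bar{\theta}_t(x) : t \in (0,1]\}$ together with the very rapid decay imposed by (5.1) on $\{r_n\}$. For the independence claim, recall that $\bar{\theta}_0(x), \Delta\bar{\theta}_1(x), \Delta\bar{\theta}_2(x), \ldots$ form an independent Gaussian family (as noted just before the lemma; this is the backward independent-increment property from $\S 3.1$). Since $P_{x,0}$ is determined by $\bar{\theta}_0(x)$ and, for $n \geq 1$, $P_{x,n}$ is determined by $\Delta\bar{\theta}_n(x)$ alone, the sets $P_{x,0},\ldots,P_{x,n}$ are mutually independent, and therefore $\mathcal{W}(\Phi_{x,n}) = \prod_{k=0}^{n}\mathcal{W}(P_{x,k})$.

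Next I would derive (5.3). Normalizing via $Z_n := \Delta\bar{\theta}_n(x)/\sqrt{\Delta G_n}\sim N(0,1)$ and setting $a_n := \sqrt{2\nu\gamma}\,\Delta D_n/\sqrt{\Delta G_n}$, one has $\mathcal{W}(P_{x,n}) = \mathbb{P}(|Z_n - a_n|\leq 1)$, which by the Mills ratio equals $(1+o(1))(a_n\sqrt{2\pi})^{-1}\,e^{-a_n^2/2 + a_n - 1/2}$ for large $a_n$. The key step is to evaluate $a_n^2$. Using the small-$t$ asymptotics $G(t)\sim c_\nu\,t^{2-\nu}$ (which follow from those of $K_{(\nu-2)/2}$ and $I_{(\nu-2)/2}$ near zero), introduce $u_n := G(r_{n-1})/G(r_n)$ and $v_n := (-\ln r_{n-1})/(-\ln r_n)$; a direct computation gives
\[
\frac{(\Delta D_n)^2}{\Delta G_n} = (-\ln r_n)\,\frac{(1-\sqrt{u_n v_n})^2}{1-u_n},
\]
and (5.1) together with the above asymptotics forces $\sqrt{u_n v_n} = o(1/\sqrt{-\ln r_n})$. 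Hence $a_n^2/2 = \nu\gamma(-\ln r_n) + O(\sqrt{-\ln r_n})$, and collecting the $e^{a_n}$, $1/a_n$, and $O(1)$ contributions from the Mills estimate yields $\ln\mathcal{W}(P_{x,n}) = \nu\gamma\ln r_n + O(\sqrt{-\ln r_n})$, which is (5.3).

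For (5.4), taking logarithms and using the independence established above,
\[
\ln\mathcal{W}(\Phi_{x,n}) = \nu\gamma\sum_{k=1}^{n}\ln r_k + O\!\left(\sum_{k=1}^{n}\sqrt{-\ln r_k}\right) + O(1).
\]
Iterating (5.1) gives $-\ln r_k \leq (k+1)^{-2}(-\ln r_{k+1})$ for large $k$, whence $\sum_{k=1}^{n-1}(-\ln r_k) = O((-\ln r_n)/n^2)$ and $\sum_{k=1}^{n}\sqrt{-\ln r_k} = O(\sqrt{-\ln r_n})$. Since (5.1) also forces $-\ln r_n$ to grow faster than any polynomial in $n$, the additive error $O(\sqrt{-\ln r_n})$ is of smaller order than $\nu\gamma\ln r_n/n$, and the whole expression has the form $\nu\gamma\ln r_n\cdot(1+O(1/n))$, giving (5.4).

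The main obstacle is the second paragraph: ensuring that the correction in (5.3) is of the sharp order $\sqrt{-\ln r_n}$ requires simultaneously handling the polynomial prefactor $1/a_n$ from the Mills ratio, the shift $e^{a_n}$, and the multiplicative correction $(1+O(\sqrt{u_n v_n}))$ inside $-a_n^2/2 \approx -\nu\gamma(-\ln r_n)$, together with a careful verification that (5.1) is strong enough to make the last correction $o(1/\sqrt{-\ln r_n})$; otherwise the error could swamp the $\sqrt{-\ln r_n}$ budget. Once (5.3) is in hand, (5.4) follows essentially by summation, with the rapid decay of $\{r_n\}$ ensuring that the dominant contribution to $\sum_k \ln r_k$ comes from the single term $k=n$.
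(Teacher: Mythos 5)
Your argument is correct and is precisely the ``straightforward computation'' the paper alludes to (the paper explicitly omits this proof). The independence claim follows exactly as you say from the independent-increment property of $\{\bar{\theta}_t(x)\}$. For (5.3), your normalization $a_n=\sqrt{2\nu\gamma}\,\Delta D_n/\sqrt{\Delta G_n}$ and the identity $(\Delta D_n)^2/\Delta G_n=(-\ln r_n)(1-\sqrt{u_nv_n})^2/(1-u_n)$ are right, and (5.1) forces $u_n$ and $v_n$ to vanish so fast (indeed $u_n\sim(r_n/r_{n-1})^{\nu-2}$ and $v_n=o(n^{-2})$, so $u_nv_n(-\ln r_n)\to 0$ super-polynomially) that $a_n^2/2=\nu\gamma(-\ln r_n)+o(1)$, comfortably within the $O(\sqrt{-\ln r_n})$ budget once the $e^{a_n}$, $1/a_n$ and $O(1)$ factors from the Gaussian tail are included; and the matching lower bound is even simpler since $\mathcal{W}(P_{x,n})\geq\phi(a_n)$. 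For (5.4), summing the logarithms and noting that (5.1) implies $\sum_{k<n}(-\ln r_k)=O((-\ln r_n)/n^2)$, $\sum_{k\le n}\sqrt{-\ln r_k}=O(\sqrt{-\ln r_n})$, and $-\ln r_n\gg n^2$, gives $|\ln\mathcal{W}(\Phi_{x,n})-\nu\gamma\ln r_n|=o((-\ln r_n)/n)$, which is stronger than the stated $O((-\ln r_n)/n)$ bound. One small stylistic point: you could simplify the write-up by noting that the Mills-ratio prefactor $1/a_n$ and the shift $e^{a_n}$ are both trivially absorbed in $e^{O(\sqrt{-\ln r_n})}$ without any delicate bookkeeping, since $a_n=O(\sqrt{-\ln r_n})$ and $\ln a_n=O(\ln\ln(1/r_n))$; the only place (5.1) is really needed in (5.3) is to control $u_nv_n$.
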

The results above follow from straightforward computations with Gaussian
distributions, combined with the assumption (\ref{eq:condition on r_n}).
Proofs are omitted.

\subsection*{Step 2: Obtain a subset of $ST_{\theta}^{\gamma}$.}

For every $n\ge0$, consider the lattice partition of $\overline{S\left(O,1\right)}$
with cell size $r_{n}$. Assume that $\mathcal{K}_{n}=\left\{ x_{j}^{\left(n\right)}:\, j=1,\cdots,K_{n}\right\} $
is the collection of all the cell centers, where $K_{n}=r_{n}^{-\nu}$.
For every $\theta\in\Theta$, set 
\[
\Xi_{n,\theta}:=\left\{ x_{j}^{(n)}\in\mathcal{K}_{n}:\;1\leq j\leq K_{n},\,\theta\in\Phi_{x_{j}^{(n)},n}\right\} .
\]

\begin{lem}
For every $\gamma\in\left(0,1\right)$ and every $\theta\in\Theta$,
\begin{equation}
ST_{\theta}^{\gamma}\supseteq\Sigma_{\theta}^{\gamma}:=\bigcap_{k\geq1}\,\overline{\bigcup_{n\geq k}\,\bigcup_{x\in\Xi_{n,\theta}}\, S\left(x,r_{n}\right)}.\label{eq:subset of ST}
\end{equation}
.\end{lem}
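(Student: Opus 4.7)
The plan is to show that any $y \in \Sigma_{\theta}^{\gamma}$ satisfies the sequential thick-point condition by producing a sequence of cell centers $x_{m} \to y$ on which we have explicit control of $\bar{\theta}_{r_{i}}$, and then transferring that control from $x_{m}$ to $y$ by spatial continuity.

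First I would unwind the definition of $\Sigma_{\theta}^{\gamma}$. Since $U_{k} := \bigcup_{n \ge k} \bigcup_{x \in \Xi_{n,\theta}} S(x,r_{n})$ is decreasing in $k$, $y \in \bigcap_{k} \overline{U_{k}}$ yields, for each $m \ge 1$, some $z_{m} \in U_{m}$ with $|y - z_{m}| < 1/m$. Writing $z_{m} \in S(x_{m}, r_{n_{m}})$ for some $n_{m} \ge m$ and $x_{m} \in \Xi_{n_{m}, \theta}$, we have $|z_{m} - x_{m}| \le \sqrt{\nu}\, r_{n_{m}} \le \sqrt{\nu}\, r_{m}$, so $x_{m} \to y$ and $n_{m} \to \infty$.

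Next, since $x_{m} \in \Xi_{n_{m}, \theta}$ means $\theta \in \bigcap_{i=0}^{n_{m}} P_{x_{m}, i}$, a telescoping sum of the defining inequalities of $P_{x_{m},0}, P_{x_{m},1}, \ldots, P_{x_{m},n}$ gives, for every $n \le n_{m}$,
\begin{equation*}
\bigl|\bar{\theta}_{r_{n}}(x_{m}) - \sqrt{2\nu\gamma}\, D(r_{n})\bigr| \;\le\; \sqrt{G(r_{0})} + \sum_{i=1}^{n} \sqrt{\Delta G_{i}}.
\end{equation*}
For each fixed $n$, choosing $m$ large enough that $n_{m} \ge n$ and invoking the continuity of $x \mapsto \bar{\theta}_{r_{n}}(x)$ (the modification secured at the end of $\mathsection 3.2$), I pass to the limit $m \to \infty$ to obtain the same inequality with $x_{m}$ replaced by $y$.

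Finally, I would divide by $D(r_{n})$ and let $n \to \infty$. From $G(t) \sim C_{\nu} t^{2-\nu}$ near $0$ (a consequence of the small-argument asymptotics of $I_{(\nu-2)/2}$ and $K_{(\nu-2)/2}$ in \eqref{eq:concentric cov renorm}) and the hypothesis \eqref{eq:condition on r_n} forcing $r_{n}/r_{n-1} \to 0$ faster than any geometric rate, the sum $\sum_{i=1}^{n} \sqrt{\Delta G_{i}}$ is dominated by a fixed multiple of its last term $\sqrt{\Delta G_{n}} = O(r_{n}^{(2-\nu)/2})$, while $D(r_{n}) \asymp r_{n}^{(2-\nu)/2}\sqrt{-\ln r_{n}}$. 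Hence the right-hand side is $O\bigl(1/\sqrt{-\ln r_{n}}\bigr) \to 0$, which gives $\bar{\theta}_{r_{n}}(y)/D(r_{n}) \to \sqrt{2\nu\gamma}$, i.e., $y \in ST_{\theta}^{\gamma}$.

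The only mildly delicate point is verifying that $\sum_{i=1}^{n} \sqrt{\Delta G_{i}}$ is geometrically dominated by its last term; this is the step that crucially uses the super-fast decay hypothesis \eqref{eq:condition on r_n}. Steps 1 and 3 are almost formal once the spatial continuity of $\bar{\theta}_{t}(\cdot)$ is in hand, so the real content of the lemma is Step 4's elementary asymptotic comparison.
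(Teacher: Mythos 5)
Your proof is correct, and it takes a genuinely simpler route than the paper's in the step that transfers information from the cell centers to the point $y$. The paper first proves $ST_\theta^\gamma \supseteq \bigcap_k\bigcup_{n\geq k}\bigcup_{x\in\Xi_{n,\theta}}\overline{S(x,r_n)}$ by combining the telescoping bound with the quantitative almost-sure modulus estimate (\ref{eq:almost sure modulus}), and then handles the outer closure $\Sigma_\theta^\gamma$ by a two-case analysis of an approximating sequence $y_p\to\tilde{y}$, with the words ``one can follow similar arguments as above.'' You instead unwind $\bigcap_k\overline{U_k}$ directly to produce $x_m\in\Xi_{n_m,\theta}$ with $n_m\to\infty$ and $x_m\to y$, get the telescoping bound for $\bar{\theta}_{r_n}(x_m)$, and then pass $m\to\infty$ at each fixed $n$ using nothing more than the pointwise continuity of $x\mapsto\bar{\theta}_{r_n}(x)$ — a property of the chosen modification that holds for \emph{every} $\theta\in\Theta$, not merely almost every one. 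This buys you two things: it dispenses with the closure case-split entirely, and it actually delivers the statement as claimed, ``for every $\theta\in\Theta$,'' which the paper's appeal to (\ref{eq:almost sure modulus}) only gives almost surely. One minor remark on your final asymptotic step: the geometric-dominance argument ($\sum_{i\leq n}\sqrt{\Delta G_i}=O(\sqrt{\Delta G_n})$) is correct but a little heavier than necessary; the paper's cruder bound $\sum_{p=1}^n\sqrt{\Delta G_p}\leq n\sqrt{G(r_n)}$, so that the whole expression is at most $(n+1)/\sqrt{-\ln r_n}$, already tends to $0$ by (\ref{eq:condition on r_n}) without invoking the precise $t^{2-\nu}$ asymptotics of $G$. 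Either way the conclusion holds, so this is a matter of taste, not a gap.
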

\begin{proof}
Let $\theta$ and $\gamma$ be fixed. We first show that
\[
ST_{\theta}^{\gamma}\supseteq\bigcap_{k\geq1}\,\bigcup_{n\geq k}\,\bigcup_{x\in\Xi_{n,\theta}}\,\overline{S\left(x,r_{n}\right)}.
\]
 For any $y$ in the RHS above, there exists a subsequence $\left\{ n_{k}:k\geq1\right\} \subseteq\mathbb{N}$
with $n_{k}\nearrow\infty$ as $k\nearrow\infty$ and a sequence of
cell centers $\left\{ x^{(n_{k})}\in\Xi_{n_{k},\theta}:\, k\geq1\right\} $
such that $\left|y-x^{(n_{k})}\right|\leq\sqrt{\nu}\, r_{n_{k}}$
for every $k\geq1$. Moreover, by the definition of $\Xi_{n_{k},\theta}$
and the triangle inequality, for every $j=0,1,\cdots,n_{k}$,
\[
\left|\frac{\bar{\theta}_{r_{j}}\left(x^{\left(n_{k}\right)}\right)}{D\left(r_{j}\right)}-\sqrt{2\nu\gamma}\right|\leq\frac{\sqrt{G\left(r_{0}\right)}+\sum_{p=1}^{j}\sqrt{\Delta G_{p}}}{D\left(r_{j}\right)}\leq\frac{j+1}{\sqrt{-\ln r_{j}}}.
\]
When $j$ is sufficiently large, the RHS above can be arbitrarily
small; moreover, (\ref{eq:almost sure modulus}) implies that, if
$n_{k}$ is large such that $r_{n_{k}}<r_{j+1}^{4}$, then 
\[
\left|\frac{\bar{\theta}_{r_{j}}\left(x^{\left(n_{k}\right)}\right)}{D\left(r_{j}\right)}-\frac{\bar{\theta}_{r_{j}}\left(y\right)}{D\left(r_{j}\right)}\right|\leq r_{j-1}^{3/16}.
\]
It follows immediately from the triangle inequality that 
\[
\lim_{j\rightarrow\infty}\frac{\bar{\theta}_{r_{j}}\left(y\right)}{D\left(r_{j}\right)}=\sqrt{2\nu\gamma},
\]
 and hence $y\in ST_{\theta}^{\gamma}$. 

Next, let $\tilde{y}\in\Sigma_{\theta}^{\gamma}$. For each $k\geq1$,
there exists a sequence $\left\{ y_{p}:\, p\geq1\right\} $ with 
\[
y_{p}\in\bigcup_{n\geq k}\,\bigcup_{x\in\Xi_{n,\theta}}\, S\left(x,r_{n}\right)\,\mbox{ for every }p\geq1
\]
such that $\lim_{p\rightarrow\infty}y_{p}=\tilde{y}$. Either, for
some $n\geq k$, $y_{p}\in\bigcup_{x\in\Xi_{n,\theta}}\, S\left(x,r_{n}\right)$
for infinitely many $p$'s, in which case there must exist $x^{(n)}\in\Xi_{n,\theta}$
such that $\left|\tilde{y}-x^{(n)}\right|\leq2\sqrt{\nu}\cdot r_{n}$,
or, one can find a subsequence $\left\{ n_{p}:\, p\geq0\right\} $
with $n_{p}\nearrow\infty$ as $p\nearrow\infty$ such that $y_{p}\in S\left(x^{(n_{p})},r_{n_{p}}\right)$
for some $x^{(n_{p})}\in\Xi_{n_{p},\theta}$, in which case, since
$y_{p}\rightarrow\tilde{y}$, $\left|x^{(n_{p})}-\tilde{y}\right|$
can be arbitrarily small when $p$ is sufficiently large. In either
case, one can follow similar arguments as above to show that $\tilde{y}\in ST_{\theta}^{\gamma}$. 
\end{proof}

\subsection*{Step 3: Construct a sequence of measures.}

For each $n\geq1$ and $\theta\in\Theta$, define a finite measure
on $\overline{S\left(O,1\right)}$ by, 
\begin{equation}
\forall A\in\mathcal{B}\left(\overline{S\left(O,1\right)}\right),\,\mu_{n,\theta}\left(A\right):=\frac{1}{K_{n}}\sum_{j=1}^{K_{n}}\frac{\mathbb{I}_{\Xi_{n,\theta}}\left(x_{j}^{(n)}\right)}{\mathcal{W}\left(\Phi_{x_{j}^{(n)},n}\right)}\frac{\mbox{vol}\left(A\cap S\left(x_{j}^{(n)},r_{n}\right)\right)}{\mbox{vol}\left(S\left(x_{j}^{(n)},r_{n}\right)\right)}\label{eq:def of mu_n_theta}
\end{equation}
where ``vol'' refers to the volume under the Lebesgue measure on
$\mathbb{R}^{\nu}$. It is clear that 
\begin{equation}
\begin{split}\mathbb{E}^{\mathcal{W}}\left[\mu_{n,\theta}\left(\overline{S\left(O,1\right)}\right)\right] & =1\end{split}
\label{eq:1st moment of total mass under mu_theta}
\end{equation}
for every $n\geq1$. We also need to study the second moment of $\mu_{n,\theta}\left(\overline{S\left(O,1\right)}\right)$,
to which end we write the second moment as 
\begin{equation}
\begin{split}\mathbb{E}^{\mathcal{W}}\left[\left(\mu_{n,\theta}\left(\overline{S\left(O,1\right)}\right)\right)^{2}\right] & =\frac{1}{K_{n}^{2}}\,\sum_{j,k=1}^{K_{n}}\,\frac{\mathcal{W}\left(\Phi_{x_{j}^{(n)},n}\bigcap\Phi_{x_{k}^{(n)},n}\right)}{\mathcal{W}\left(\Phi_{x_{j}^{(n)},n}\right)\mathcal{W}\left(\Phi_{x_{k}^{(n)},n}\right)}\end{split}
.\label{eq:2nd moment of the total mass under mu_theta}
\end{equation}

We will show that 
\[
\sup_{n\geq1}\,\mathbb{E}^{\mathcal{W}}\left[\left(\mu_{n,\theta}\left(\overline{S\left(O,1\right)}\right)\right)^{2}\right]<\infty.
\]
First notice that, when $j=k$, (\ref{eq: estimate for Phi_x,n})
implies that 
\[
\frac{\mathcal{W}\left(\Phi_{x_{j}^{(n)},n}\bigcap\Phi_{x_{k}^{(n)},n}\right)}{\mathcal{W}\left(\Phi_{x_{j}^{(n)},n}\right)\mathcal{W}\left(\Phi_{x_{k}^{(n)},n}\right)}=\frac{1}{\mathcal{W}\left(\Phi_{x_{j}^{(n)},n}\right)}\leq e^{-\nu\gamma\ln r_{n}\left(1+C_{\nu}/n\right)},
\]
so the sum over the diagonal terms in (\ref{eq:2nd moment of the total mass under mu_theta})
is bounded from above by 
\[
K_{n}^{-1}\cdot e^{-\nu\gamma\ln r_{n}\left(1+C_{\nu}/n\right)}=e^{\left(\nu-\nu\gamma\right)\ln r_{n}+o\left(\ln r_{n}\right)}
\]
which converges to zero as $n\rightarrow\infty$ so long as $\gamma<1$.
So we only need to treat the sum over the off-diagonal terms in (\ref{eq:2nd moment of the total mass under mu_theta}),
and this is done in separate cases depending on the distance between
the two cell centers $x_{j}^{(n)}$ and $x_{k}^{(n)}$. 

Assume that $j\neq k$. Then there exists a unique $i\in\mathbb{N}$,
$0\leq i\leq n-1$, such that 
\[
2r_{i+1}\leq\left|x_{j}^{(n)}-x_{k}^{(n)}\right|<2r_{i},\quad\left(\dagger\right)
\]
we can rewrite the sum over the off-diagonal terms in (\ref{eq:2nd moment of the total mass under mu_theta})
as 
\begin{equation}
\frac{1}{K_{n}^{2}}\,\sum_{j=1}^{K_{n}}\,\sum_{i=0}^{n-1}\sum_{\left\{ k:\,(\dagger)\mbox{ holds with }i\right\} }\,\frac{\mathcal{W}\left(\Phi_{x_{j}^{(n)},n}\bigcap\Phi_{x_{k}^{(n)},n}\right)}{\mathcal{W}\left(\Phi_{x_{j}^{(n)},n}\right)\mathcal{W}\left(\Phi_{x_{k}^{(n)},n}\right)}\,.\label{eq:2nd moment rewritten in terms of L}
\end{equation}
Let $j$ and $k$ be fixed for now. For $l,\, l^{\prime}\ge1$, set
\[
\mbox{DCov}\left(l,\, l^{\prime}\right):=\mathbb{E}^{\mathcal{W}}\left[\Delta\bar{\theta}_{l}\left(x_{j}^{(n)}\right)\cdot\Delta\bar{\theta}_{l^{\prime}}\left(x_{k}^{(n)}\right)\right].
\]
By (\ref{eq:covariance for (1-Delta)^s-1}), $\mbox{DCov}\left(l,\, l^{\prime}\right)$
only depends on $r_{l}$, $r_{l^{\prime}}$ and $\left|x_{j}^{(n)}-x_{k}^{(n)}\right|$.
It is sufficient to treat the cases when $\left|x_{j}^{(n)}-x_{k}^{(n)}\right|$
is small, or equivalently, when $i$, as determined by $\left(\dagger\right)$,
is large. One can easily use (\ref{eq:cov non-concentric non-overlapping})
and (\ref{eq:cov non-concentric inclusion}) to verify that $\mbox{DCov}\left(l,\, l^{\prime}\right)=0$
when $l^{\prime}\geq i+2$ and either $l\geq i+2$ or $l\leq i-1$,
which implies that the family 
\begin{equation}
\left\{ \Delta\bar{\theta}_{l}\left(x_{j}^{(n)}\right),\,\Delta\bar{\theta}_{l^{\prime}}\left(x_{k}^{(n)}\right):\,1\leq l\leq i-1,\, i+2\leq l\leq n,\, i+2\leq l^{\prime}\leq n\right\} \label{eq:independent family}
\end{equation}
is independent. However, the independence of this family is not sufficient
for (\ref{eq:2nd moment rewritten in terms of L}) to be bounded in
$n$. We need to carry out more careful analysis by further breaking
down the range of $\left|x_{j}^{(n)}-x_{k}^{(n)}\right|.$

\subsubsection*{Case 1.}

Assume that, for some sufficiently small $\epsilon\in\left(0,1-\gamma\right)$,
\[
2r_{i+1}\leq\left|x_{j}^{(n)}-x_{k}^{(n)}\right|<r_{i+1}^{1-\varepsilon}.
\]
In this case, besides the family of independent random variables in
(\ref{eq:independent family}), we also have that for $l^{\prime}\geq i+2$
, 
\[
B\left(x_{k}^{(n)},r_{l^{\prime}-1}\right)\subseteq B\left(x_{j}^{(n)},r_{i}\right)\mbox{ and }B\left(x_{j}^{(n)},r_{i+1}\right)\bigcap B\left(x_{k}^{(n)},r_{l^{\prime}-1}\right)=\emptyset,
\]
which, by (\ref{eq:cov non-concentric non-overlapping}) and (\ref{eq:cov non-concentric inclusion}),
leads to $\mbox{DCov}\left(i;\, l^{\prime}\right)=0$ and $\mbox{DCov}\left(i+1,\, l^{\prime}\right)=0$,
and hence $\Delta\bar{\theta}_{i}\left(x_{j}^{(n)}\right)$ and $\Delta\bar{\theta}_{i+1}\left(x_{j}^{(n)}\right)$
are independent of $\Delta\bar{\theta}_{l^{\prime}}\left(x_{k}^{(n)}\right)$.
As a result, 
\[
\begin{split}\frac{\mathcal{W}\left(\Phi_{x_{j}^{(n)},n}\bigcap\Phi_{x_{k}^{(n)},n}\right)}{\mathcal{W}\left(\Phi_{x_{j}^{(n)},n}\right)\mathcal{W}\left(\Phi_{x_{k}^{(n)},n}\right)} & \leq\frac{\mathcal{W}\left(\Phi_{x_{j}^{(n)},n}\right)\cdot\Pi_{l^{\prime}=i+2}^{n}\mathcal{W}\left(P_{x_{k}^{(n)},l^{\prime}}\right)}{\mathcal{W}\left(\Phi_{x_{j}^{(n)},n}\right)\mathcal{W}\left(\Phi_{x_{k}^{(n)},n}\right)}\\
 & =\frac{1}{\mathcal{W}\left(\Phi_{x_{k}^{(n)},i+1}\right)}\leq\exp\left[-\nu\gamma\ln r_{i+1}\left(1+C_{\nu}/n\right)\right].
\end{split}
\]
The last inequality is due to (\ref{eq: estimate for Phi_x,n}). On
the other hand, if $j$ is fixed, then the number of $x_{k}^{(n)}$'s
such that 
\[
2r_{i+1}\leq\left|x_{j}^{(n)}-x_{k}^{(n)}\right|<r_{i+1}^{1-\varepsilon}
\]
is of the order of $\left(r_{i+1}^{1-\epsilon}/r_{n}\right)^{\nu}.$
The contribution to (\ref{eq:2nd moment rewritten in terms of L})
under this case is
\[
\begin{split} & \sum_{i=0}^{n-1}\exp\left[\nu\left(1-\epsilon-\gamma\right)\ln r_{i+1}+o\left(-\ln r_{i+1}\right)\right]\end{split}
\]
which is bounded in $n$ since $\epsilon<1-\gamma$.

\subsubsection*{Case 2.}

Assume that
\begin{equation}
r_{i}-r_{i+2}<\left|x_{j}^{(n)}-x_{k}^{(n)}\right|\leq r_{i}+r_{i+2}.\label{eq:case 2}
\end{equation}
Since the random variables in (\ref{eq:independent family}) are independent,
we have that 
\[
\begin{split}\frac{\mathcal{W}\left(\Phi_{x_{j}^{(n)},n}\bigcap\Phi_{x_{k}^{(n)},n}\right)}{\mathcal{W}\left(\Phi_{x_{j}^{(n)},n}\right)\mathcal{W}\left(\Phi_{x_{k}^{(n)},n}\right)} & \leq\frac{\mathcal{W}\left(\Phi_{x_{j}^{(n)},i-1}\right)\Pi_{l=i+2}^{n}\mathcal{W}\left(P_{x_{j}^{(n)},l}\right)\Pi_{l^{\prime}=i+2}^{n}\mathcal{W}\left(P_{x_{k}^{(n)},l^{\prime}}\right)}{\mathcal{W}\left(\Phi_{x_{j}^{(n)},n}\right)\mathcal{W}\left(\Phi_{x_{k}^{(n)},n}\right)}\\
 & =\frac{1}{\mathcal{W}\left(P_{x_{j}^{(n)},i}\right)\mathcal{W}\left(P_{x_{j}^{(n)},i+1}\right)\mathcal{W}\left(\Phi_{x_{k}^{(n)},i+1}\right)},
\end{split}
\]
which, by (\ref{eq: estimate for P_x,n}) and (\ref{eq: estimate for Phi_x,n}),
is no greater than 
\[
\exp\left[-3\nu\gamma\ln r_{i+1}\left(1+C_{\nu}/n\right)\right].
\]
Meanwhile, with $x_{j}^{(n)}$ fixed, the number of $x_{k}^{(n)}$'s
that satisfy (\ref{eq:case 2}) is of the order of $r_{i}^{\nu-1}r_{i+2}/r_{n}^{\nu}$.
Hence, the contribution to (\ref{eq:2nd moment rewritten in terms of L})
under this case is
\[
\begin{split} & \sum_{i=0}^{n-1}\exp\left[\ln r_{i+2}-3\nu\gamma\ln r_{i+1}+o\left(-\ln r_{i+1}\right)\right]\end{split}
\]
which is bounded in $n$ by the assumption (\ref{eq:condition on r_n}).

\subsubsection*{Case 3}

Assume that either 
\[
r_{i}-r_{i+1}<\left|x_{j}^{(n)}-x_{k}^{(n)}\right|\leq r_{i}-r_{i+2}\quad\mbox{(3a)}
\]
or 
\[
r_{i}+r_{i+2}\leq\left|x_{j}^{(n)}-x_{k}^{(n)}\right|<r_{i}+r_{i+1}.\quad\mbox{(3b)}
\]
We observe that for all $l^{\prime}\geq i+3$, by (\ref{eq:cov non-concentric non-overlapping})
and (\ref{eq:cov non-concentric inclusion}), under the hypothesis
(3a) or (3b), $\mbox{DCov}\left(i+1,\, l^{\prime}\right)=0$. Together
with the family of independent random variables in (\ref{eq:independent family}),
we see that both $P_{x_{j}^{(n)},i+1}$ and $P_{x_{j}^{(n)},i+2}$
are independent of $P_{x_{k}^{(n)},l^{\prime}}$ for all $l^{\prime}\geq i+3$,
and similarly both $P_{x_{k}^{(n)},i+1}$ and $P_{x_{k}^{(n)},i+2}$
are independent of $P_{x_{j}^{(n)},l}$ for all $l\geq i+3$. Thus,
$\mathcal{W}\left(\Phi_{x_{j}^{(n)},n}\bigcap\Phi_{x_{k}^{(n)},n}\right)$
is bounded from above by
\begin{equation}
\begin{split} & \Pi_{l=i+3}^{n}\mathcal{W}\left(P_{x_{j}^{(n)},l}\right)\cdot\Pi_{l^{\prime}=i+3}^{n}\mathcal{W}\left(P_{x_{k}^{(n)},l^{\prime}}\right)\\
 & \qquad\qquad\qquad\cdot\mathcal{W}\left(P_{x_{j}^{(n)},i+1}\cap P_{x_{j}^{(n)},i+2}\cap P_{x_{k}^{(n)},i+1}\cap P_{x_{k}^{(n)},i+2}\right),
\end{split}
\label{eq:splitting prob in case 3}
\end{equation}
so we only need to focus on the family 
\[
\left\{ \Delta\bar{\theta}_{i+1}\left(x_{j}^{(n)}\right),\Delta\bar{\theta}_{i+2}\left(x_{j}^{(n)}\right),\Delta\bar{\theta}_{i+1}\left(x_{k}^{(n)}\right),\Delta\bar{\theta}_{i+2}\left(x_{k}^{(n)}\right)\right\} .
\]

\begin{lem}
\label{lem:estimate of prob of i+1,i+2,i+1,i+2}Under the hypothesis
(3a) or (3b), there exists a constant $C_{\nu}>0$ such that for all
$i\geq0$, 
\begin{equation}
\begin{split} & \frac{\mathcal{W}\left(P_{x_{j}^{(n)},i+1}\cap P_{x_{j}^{(n)},i+2}\cap P_{x_{k}^{(n)},i+1}\cap P_{x_{k}^{(n)},i+2}\right)}{\mathcal{W}\left(P_{x_{j}^{(n)},i+1}\right)\mathcal{W}\left(P_{x_{j}^{(n)},i+2}\right)\mathcal{W}\left(P_{x_{k}^{(n)},i+1}\right)\mathcal{W}\left(P_{x_{k}^{(n)},i+2}\right)}\leq e^{C_{\nu}\sqrt{-\ln r_{i+1}}}.\end{split}
\label{eq:estimate of prob of i+1, i+2, i+1, i+2}
\end{equation}
\end{lem}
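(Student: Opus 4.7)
The plan is to set up the problem as a four-dimensional Gaussian density ratio. Let
\[
Y_1 := \Delta\bar{\theta}_{i+1}(x_j^{(n)}),\; Y_2 := \Delta\bar{\theta}_{i+2}(x_j^{(n)}),\; Y_3 := \Delta\bar{\theta}_{i+1}(x_k^{(n)}),\; Y_4 := \Delta\bar{\theta}_{i+2}(x_k^{(n)}),
\]
and correspondingly let $P_1,\ldots,P_4$ stand for the four events $P_{x_j^{(n)},i+1}$, $P_{x_j^{(n)},i+2}$, $P_{x_k^{(n)},i+1}$, $P_{x_k^{(n)},i+2}$. The vector $(Y_1,\ldots,Y_4)$ is centered Gaussian under $\mathcal{W}$ with some covariance matrix $\Sigma$; write $D$ for its diagonal part. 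By the backward Markov property of the concentric process ($\mathsection 3.1$), $Y_1\perp Y_2$ and $Y_3\perp Y_4$, so $\Sigma-D$ is supported on the $\{1,2\}\times\{3,4\}$ off-diagonal block. The event $\bigcap_{a=1}^{4}P_a$ corresponds to $(Y_1,\ldots,Y_4)$ lying in a product rectangle $A\subseteq\mathbb{R}^4$, so the claim (\ref{eq:estimate of prob of i+1, i+2, i+1, i+2}) reduces to bounding the joint Gaussian density $f_\Sigma$ against the product $f_D$ of its marginals on $A$.

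The first step is to estimate the four cross-covariances $\mathrm{Cov}(Y_a,Y_b)$ for $a\in\{1,2\}$, $b\in\{3,4\}$ by expanding each as a signed sum of four quantities $\mathrm{Cov}(\bar{\theta}_{r_\alpha}(x_j^{(n)}),\bar{\theta}_{r_\beta}(x_k^{(n)}))$ and applying Lemma~\ref{lem:asymptotics of the cov functions}. Under either (3a) or (3b) the hypothesis (\ref{eq:condition on r_n}) gives $|x_j^{(n)}-x_k^{(n)}|>2r_{i+1}$ for all large $i$, which makes every pair of balls of radii in $\{r_{i+1},r_{i+2}\}$ around the two centres disjoint; the four terms comprising $\mathrm{Cov}(Y_2,Y_4)$ therefore all equal $C_{\mathrm{disj}}(|x_j^{(n)}-x_k^{(n)}|)$ and cancel to $0$. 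For $\mathrm{Cov}(Y_1,Y_3)$, $\mathrm{Cov}(Y_1,Y_4)$, $\mathrm{Cov}(Y_2,Y_3)$ the terms involving the outer radius $r_i$ must be handled case by case: either (\ref{eq:cov non-concentric non-overlapping}) or (\ref{eq:cov non-concentric inclusion}) applies directly, or, in the narrow boundary overlap configurations where neither inclusion nor disjointness holds, the Cauchy--Schwarz bound $|\mathrm{Cov}(\bar{\theta}_t(x),\bar{\theta}_s(y))|\le\sqrt{G(t)G(s)}$ is used. All estimates combine to give the correlation bound
\[
\max_{a,b}|\rho_{ab}|\;\le\;C_\nu\bigl(r_{i+1}/r_i\bigr)^{(\nu-2)/2}=:\epsilon_i,\qquad\rho_{ab}:=\frac{\mathrm{Cov}(Y_a,Y_b)}{\sigma_a\sigma_b},\;\sigma_a^2:=\mathrm{Var}(Y_a),
\]
which by (\ref{eq:condition on r_n}) is super-polynomially small in $(-\ln r_{i+1})$.

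For the second step I would apply the standard Gaussian density-ratio formula. Writing the correlation matrix as $R:=D^{-1/2}\Sigma D^{-1/2}=I+E$ with $\|E\|\le C_\nu\epsilon_i$,
\[
\frac{f_\Sigma(y)}{f_D(y)}=(\det R)^{-1/2}\exp\!\left(-\tfrac{1}{2}z^\top(R^{-1}-I)z\right),\qquad z:=D^{-1/2}y.
\]
For large $i$ the prefactor $(\det R)^{-1/2}=1+\mathcal{O}(\epsilon_i^2)$ is bounded. For $y\in A$, if $Y_a$ is at scale level $l\in\{i+1,i+2\}$ then $|z_a|\le\sqrt{2\nu\gamma}\,\Delta D_l/\sigma_a+1\le C_\nu\sqrt{-\ln r_l}$, using $\Delta D_l/\sqrt{\Delta G_l}\asymp\sqrt{-\ln r_l}$. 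Combined with $\|R^{-1}-I\|\le 2\|E\|$ the exponent obeys
\[
\bigl|z^\top(R^{-1}-I)z\bigr|\;\le\;C_\nu\,\epsilon_i\cdot(-\ln r_{i+2}),
\]
and by (\ref{eq:condition on r_n}) and (\ref{eq:condition on r_n (2)}) the right-hand side tends to $0$ as $i\to\infty$. Therefore $\sup_{y\in A}f_\Sigma(y)/f_D(y)$ is bounded (in fact by $1+o(1)$), and integrating against $f_D$ over $A$ gives
\[
\mathcal{W}\!\left(\bigcap_{a=1}^{4}P_a\right)\;\le\;\bigl(1+o(1)\bigr)\prod_{a=1}^{4}\mathcal{W}(P_a)\;\le\;e^{C_\nu\sqrt{-\ln r_{i+1}}}\prod_{a=1}^{4}\mathcal{W}(P_a),
\]
which is (\ref{eq:estimate of prob of i+1, i+2, i+1, i+2}).

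The main obstacle I anticipate is the bookkeeping inside the first step: in the boundary sub-ranges of (3a) and (3b), where a ball of radius $r_{i+1}$ centred at one of $x_j^{(n)}, x_k^{(n)}$ neither lies inside nor is disjoint from the ball of radius $r_i$ at the other, Lemma~\ref{lem:asymptotics of the cov functions} gives no exact expression. One has to settle for the crude Cauchy--Schwarz bound on those individual covariances and then verify that the rapid decay of $\{r_n\}$ still forces $\epsilon_i$ to be much smaller than $1/\sqrt{-\ln r_{i+1}}$. Because the target estimate $e^{C_\nu\sqrt{-\ln r_{i+1}}}$ leaves a substantial slack, this is a matter of careful case analysis rather than conceptual difficulty.
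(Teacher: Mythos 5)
Your proof is correct, but it takes a genuinely different route from the paper's. The paper proceeds by three successive one-dimensional conditionings (on $Y_{i+2}$, then on the conditioned $X_{i+2}'$, then on $Y_{i+1}''$), tracking at each step the conditional mean and variance and showing they are small perturbations of the unconditioned quantities, so that each conditional probability is controlled by the corresponding unconditional one up to a factor $e^{\mathcal{O}(\sqrt{-\ln r_{i+1}})}$. Your argument packages the whole thing into a single four-dimensional Gaussian density-ratio bound on the rectangle $A$, which gives the sharper conclusion $\sup_{y\in A}f_\Sigma(y)/f_D(y)=1+o(1)$ in one stroke. Both proofs rest on exactly the same covariance bookkeeping via Lemma \ref{lem:asymptotics of the cov functions} and Cauchy--Schwarz in the boundary-overlap configurations, and both yield the correlation estimate $\max_{a\neq b}|\rho_{ab}|=\mathcal{O}\bigl((r_{i+1}/r_i)^{(\nu-2)/2}\bigr)$ with the dominant term coming from the $(i+1,i+1)$ pair. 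What the paper's conditioning buys is an argument that never multiplies a cross-correlation against $\|z\|^2$ for the worst (deepest) level: at each step only the relevant pair enters, so the bound only needs (\ref{eq:condition on r_n}). Your operator-norm bound $\|R^{-1}-I\|\,\|z\|^2\lesssim\epsilon_i(-\ln r_{i+2})$ is cruder and makes you lean on (\ref{eq:condition on r_n (2)}) to kill the $(-\ln r_{i+2})$ factor; since (\ref{eq:condition on r_n (2)}) is assumed, this is legitimate, but you could avoid the dependence entirely by expanding $z^\top E z=2\sum_{a<b}\rho_{ab}z_az_b$ and noting that $\rho_{13}$ (the dominant correlation) is paired with $z_1 z_3=\mathcal{O}(-\ln r_{i+1})$ while the $\rho_{14},\rho_{23}$ terms are multiplied by correlations of order $(r_{i+2}/r_i)^{(\nu-2)/2}$, small enough to swamp the $\sqrt{-\ln r_{i+2}}$ factor. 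Finally, for completeness you should note that for $i$ below the threshold where $\|E\|<1/2$, the left-hand side of (\ref{eq:estimate of prob of i+1, i+2, i+1, i+2}) is a finite quantity at each finite $i$, so the finitely many initial levels are absorbed into $C_\nu$.
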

\begin{proof}
We will prove this result by multiple steps of conditioning. To further
simplify the notation, throughout the proof, we write 
\[
X_{i+1}:=\Delta\bar{\theta}_{i+1}\left(x_{j}^{(n)}\right),\; X_{i+2}:=\Delta\bar{\theta}_{i+2}\left(x_{j}^{(n)}\right),
\]
and 
\[
Y_{i+1}:=\Delta\bar{\theta}_{i+1}\left(x_{k}^{(n)}\right),\; Y_{i+2}:=\Delta\bar{\theta}_{i+2}\left(x_{k}^{(n)}\right).
\]
Clearly, $Y_{i+2}$ is independent of $Y_{i+1}$ and $X_{i+2}$. Furthermore,
$\mbox{Cov}\left(X_{i+1},Y_{i+2}\right)$ is given by, when (3a) applies,
\[
\begin{split}\mbox{DCov}\left(i+1,\, i+2\right) & =-C_{incl}\left(r_{i},\left|x_{j}^{(n)}-x_{k}^{(n)}\right|\right)+\mbox{Cov}\left(r_{i},x_{j}^{(n)};r_{i+1},x_{k}^{(n)}\right);\end{split}
\]
when (3b) applies,
\[
\begin{split}\mbox{DCov}\left(i+1,\, i+2\right) & =-C_{disj}\left(\left|x_{j}^{(n)}-x_{k}^{(n)}\right|\right)+\mbox{Cov}\left(r_{i},x_{j}^{(n)};r_{i+1},x_{k}^{(n)}\right).\end{split}
\]
In either case, $\mbox{Cov}\left(X_{i+1},Y_{i+2}\right)$ doesn't
depend on $r_{i+2}$, and by the asymptotics of the functions that
are involved and the Cauchy-Schwarz inequality, 
\begin{equation}
\mbox{Cov}\left(X_{i+1},Y_{i+2}\right)=\mathcal{O}\left(\sqrt{G\left(r_{i+1}\right)G\left(r_{i}\right)}\right).\label{eq:bound for cov(i+2,i+1)}
\end{equation}
Similarly, $\mbox{Cov}\left(X_{i+1},Y_{i+1}\right)$ is given by,
when either (3a) or (3b) applies,
\[
\begin{split}\mbox{DCov}\left(i+1,\, i+1\right) & =C_{disj}\left(\left|x_{j}^{(n)}-x_{k}^{(n)}\right|\right)-2\mbox{Cov}\left(r_{i},x_{j}^{(n)};r_{i+1},x_{k}^{(n)}\right)\\
 & \hfill\hfill\hfill+\mbox{Cov}\left(r_{i},x_{j}^{(n)};r_{i},x_{k}^{(n)}\right),
\end{split}
\]
which implies that
\begin{equation}
\mbox{Cov}\left(X_{i+1},Y_{i+1}\right)=\mathcal{O}\left(\sqrt{G\left(r_{i+1}\right)G\left(r_{i}\right)}\right).\label{eq:bound for cov(i+1,i+1)}
\end{equation}

We first condition on $Y_{i+2}$. The joint conditional distribution
of $\left\{ X_{i+1},X_{i+2},Y_{i+1}\right\} $, given $Y_{i+2}=y$,
is the same as the Gaussian family $\left\{ X_{i+1}^{\prime},X_{i+2}^{\prime},Y_{i+1}^{\prime}\right\} $
where $X_{i+2}^{\prime}$ and $Y_{i+1}^{\prime}$ have the same distribution
as $X_{i+2}$ and $Y_{i+1}$ respectively, and $X_{i+1}^{\prime}$
has the Gaussian distribution $N\left(m,\sigma^{2}\right)$ with 
\[
m:=\frac{\mbox{Cov}\left(X_{i+1},Y_{i+2}\right)}{\Delta G_{i+2}}y\quad\mbox{ and }\quad\sigma^{2}:=\Delta G_{i+1}-\frac{\mbox{Cov}^{2}\left(X_{i+1},Y_{i+2}\right)}{\Delta G_{i+2}}.
\]
In particular, if $\left|y-\sqrt{2\nu\gamma}\Delta D_{i+2}\right|\leq\sqrt{\Delta G_{i+2}}$,
then, by (\ref{eq:condition on r_n}) and (\ref{eq:bound for cov(i+2,i+1)}),
$m=o\left(1\right)$ and $\sigma^{2}=\Delta G_{i+1}+o\left(1\right)$,
and these estimates%
\footnote{Here, as well as in later occasions, when concerning $o\left(1\right)$,
the ``estimate'' refers to the rate of the $o\left(1\right)$ term
converging to zero. %
} can be made uniform in $y$. Moreover, the covariance of the family
is given by $\mbox{Cov}\left(X_{i+1}^{\prime},X_{i+2}^{\prime}\right)=0$,
$\mbox{Cov}\left(X_{i+2}^{\prime},Y_{i+1}^{\prime}\right)=\mbox{Cov}\left(X_{i+2},Y_{i+1}\right)$
and $\mbox{Cov}\left(X_{i+1}^{\prime},Y_{i+1}^{\prime}\right)=\mbox{Cov}\left(X_{i+1},Y_{i+1}\right)$.
We write the following conditional distribution as
\[
\mathcal{W}\left(P_{x_{j}^{(n)},i+1}\cap P_{x_{j}^{(n)},i+2}\cap P_{x_{k}^{(n)},i+1}|Y_{i+2}=y\right)=\mathcal{W}|_{Y_{i+2}=y}\left(P_{X_{i+1}^{\prime}}\cap P_{X_{i+2}^{\prime}}\cap P_{Y_{i+1}^{\prime}}\right),
\]
where $\mathcal{W}|_{Y_{i+2}=y}$ is the conditional distribution
under $\mathcal{W}$ given $Y_{i+2}=y$, and $P_{X_{i+1}^{\prime}}$,
$P_{X_{i+2}^{\prime}}$ and $P_{Y_{i+1}^{\prime}}$ are the corresponding
events concerning $X_{i+1}^{\prime}$, $X_{i+2}^{\prime}$ and $Y_{i+1}^{\prime}$,
e.g.,
\[
P_{X_{i+1}^{\prime}}=\left\{ \left|X_{i+1}^{\prime}-\sqrt{2\nu\gamma}\Delta D_{i+1}\right|\leq\sqrt{\Delta G_{i+1}}\right\} .
\]

Next, we condition on $X_{i+2}^{\prime}=x$ where $\left|x-\sqrt{2\nu\gamma}\Delta D_{i+2}\right|\leq\sqrt{\Delta G_{i+2}}$.
Then the conditional distribution of $\left\{ X_{i+1}^{\prime},Y_{i+1}^{\prime}\right\} $
is the same as that of $\left\{ X_{i+1}^{\prime\prime},Y_{i+1}^{\prime\prime}\right\} $
where $X_{i+1}^{\prime\prime}$ has the same distribution as $X_{i+1}^{\prime}$,
and $Y_{i+1}^{\prime\prime}$ has the Gaussian distribution $N\left(\lambda,\varsigma^{2}\right)$
where 
\[
\lambda=\frac{\mbox{Cov}\left(X_{i+2},Y_{i+1}\right)}{\Delta G_{i+2}}x\quad\mbox{ and }\quad\varsigma^{2}=\Delta G_{i+1}-\frac{\mbox{Cov}^{2}\left(X_{i+2},Y_{i+1}\right)}{\Delta G_{i+2}}.
\]
Since $\mbox{Cov}\left(X_{i+2},Y_{i+1}\right)=\mbox{Cov}\left(X_{i+1},Y_{i+2}\right)$,
the estimates we obtained for $m$ and $\sigma^{2}$ also apply to
$\lambda$ and $\varsigma^{2}$ respectively, and those estimates
are uniform in $x$ and $y$. In addition, $\mbox{Cov}\left(X_{i+1}^{\prime\prime},Y_{i+1}^{\prime\prime}\right)=\mbox{Cov}\left(X_{i+1},Y_{i+1}\right)$.
Again, we write the following conditional distribution as 
\[
\mathcal{W}|_{Y_{i+2}=y}\left(P_{X_{i+1}^{\prime}}\cap P_{Y_{i+1}^{\prime}}|X_{i+2}^{\prime}=x\right)=\mathcal{W}|_{X_{i+2}^{\prime}=x}\left(P_{X_{i+1}^{\prime\prime}}\cap P_{Y_{i+1}^{\prime\prime}}\right)
\]
where $\mathcal{W}|_{X_{i+2}^{\prime}=x}$ is the conditional distribution
under $\mathcal{W}|_{Y_{i+2}=y}$ conditioning on $X_{i+2}^{\prime}=x$,
and $P_{X_{i+1}^{\prime\prime}}$ and $P_{Y_{i+1}^{\prime\prime}}$
are the corresponding events concerning $X_{i+1}^{\prime\prime}$
and $Y_{i+1}^{\prime\prime}$.

To compute $\mathcal{W}|_{X_{i+2}^{\prime}=x}\left(P_{X_{i+1}^{\prime\prime}}\cap P_{Y_{i+1}^{\prime\prime}}\right)$,
we use conditioning again. Given 
\[
Y_{i+1}^{\prime\prime}=w\in\left[\sqrt{2\nu\gamma}\Delta D_{i+1}-\sqrt{\Delta G_{i+1}},\,\sqrt{2\nu\gamma}\Delta D_{i+1}+\sqrt{\Delta G_{i+1}}\right],
\]
the conditional distribution of $X_{i+1}^{\prime\prime}$ is the Gaussian
distribution with the mean 
\[
m+\frac{\mbox{Cov}\left(X_{i+1},Y_{i+1}\right)}{\varsigma^{2}}\left(w-\lambda\right)=\mathcal{O}\left(\sqrt{G\left(r_{i}\right)\left(-\ln r_{i+1}\right)}\right)
\]
and the variance 
\[
\sigma^{2}-\frac{\mbox{Cov}^{2}\left(X_{i+1},Y_{i+1}\right)}{\varsigma^{2}}=\Delta G_{i+1}\left(1+o\left(1\right)\right).
\]
These estimates%
\footnote{Here, as well as in later occasions, when concerning ``$\mathcal{O}$'',
the ``estimate'' refers to the constants in the upper and lower
bound.%
} follow from (\ref{eq:bound for cov(i+1,i+1)}) and earlier estimates
on $m$, $\lambda$, $\sigma^{2}$ and $\varsigma^{2}$, and they
can be made uniform in $w$, $x$ and $y$. Therefore, one can easily
verify that 
\[
\begin{split}\mathcal{W}|_{X_{i+2}^{\prime}=x}\left(P_{X_{i+1}^{\prime\prime}}|Y_{i+1}^{\prime\prime}=w\right) & \leq\exp\left[\nu\gamma\ln r_{i+1}+\mathcal{O}\left(\sqrt{-\ln r_{i+1}}\right)\right]:=p_{1},\end{split}
\]
and $p_{1}$ is independent of $w$, $x$ and $y$. This further leads
to 
\[
\begin{split}\mathcal{W}|_{X_{i+2}^{\prime}=x}\left(P_{X_{i+1}^{\prime\prime}}\cap P_{Y_{i+1}^{\prime\prime}}\right) & \leq p_{1}\exp\left[\nu\gamma\ln r_{i+1}+\mathcal{O}\left(\sqrt{-\ln r_{i+1}}\right)\right]\\
 & =\exp\left[2\nu\gamma\ln r_{i+1}+\mathcal{O}\left(\sqrt{-\ln r_{i+1}}\right)\right]:=p_{2},
\end{split}
\]
and $p_{2}$ is independent of $x$ and $y$.

Finally, since $X_{i+2}^{\prime}$ has the same distribution as $X_{i+2}$,
by backtracking the condition, we have that
\[
\mathcal{W}|_{Y_{i+2}=y}\left(P_{X_{i+1}^{\prime}}\cap P_{Y_{i+1}^{\prime}}\cap P_{X_{i+2}^{\prime}}\right)\leq p_{2}\mathcal{W}\left(P_{x_{j}^{(n)},i+2}\right),
\]
and hence 
\[
\mathcal{W}\left(P_{x_{j}^{(n)},i+1}\cap P_{x_{j}^{(n)},i+2}\cap P_{x_{k}^{(n)},i+1}\cap P_{x_{k}^{(n)},i+2}\right)\leq p_{2}\mathcal{W}\left(P_{x_{j}^{(n)},i+2}\right)\mathcal{W}\left(P_{x_{k}^{(n)},i+2}\right).
\]
The desired estimate follows immediately from (\ref{eq: estimate for P_x,n}). 
\end{proof}
It follows from (\ref{eq:condition on r_n}), (\ref{eq:splitting prob in case 3})
and (\ref{eq:estimate of prob of i+1, i+2, i+1, i+2}) that, in Case
3, 
\[
\begin{split}\frac{\mathcal{W}\left(\Phi_{x_{j}^{(n)},n}\bigcap\Phi_{x_{k}^{(n)},n}\right)}{\mathcal{W}\left(\Phi_{x_{j}^{(n)},n}\right)\mathcal{W}\left(\Phi_{x_{k}^{(n)},n}\right)} & \leq\frac{\mathcal{W}\left(P_{x_{j}^{(n)},i+1}\cap P_{x_{j}^{(n)},i+2}\cap P_{x_{k}^{(n)},i+1}\cap P_{x_{k}^{(n)},i+2}\right)}{\mathcal{W}\left(\Phi_{x_{j}^{(n)},i+2}\right)\mathcal{W}\left(\Phi_{x_{k}^{(n)},i+2}\right)}\\
 & \leq\frac{\exp\left(C_{\nu}\sqrt{-\ln r_{i+1}}\right)}{\mathcal{W}\left(\Phi_{x_{j}^{(n)},i}\right)\mathcal{W}\left(\Phi_{x_{k}^{(n)},i}\right)}=\exp\left[o\left(-\ln r_{i+1}\right)\right].
\end{split}
\]
On the other hand, with $x_{j}^{(n)}$ fixed, the number of $x_{k}^{(n)}$'s
that satisfy either (3a) or (3b) is of the order of $r_{i}^{\nu-1}r_{i+1}/r_{n}^{\nu}$.
Hence, the contribution to (\ref{eq:2nd moment rewritten in terms of L})
under this case is
\[
\begin{split} & \sum_{i=0}^{n-1}\exp\left[\ln r_{i+1}+o\left(-\ln r_{i+1}\right)\right]\end{split}
\]
which is bounded in $n$.

\subsubsection*{Case 4.}

The last case is that either 
\[
r_{i+1}^{1-\epsilon}<\left|x_{j}^{(n)}-x_{k}^{(n)}\right|\leq r_{i}-r_{i+1}\quad\mbox{(4a)}
\]
or 
\[
r_{i}+r_{i+1}\leq\left|x_{j}^{(n)}-x_{k}^{(n)}\right|<2r_{i}.\quad\,\mbox{(4b)}
\]
The strategy for studying this case is similar to that for the previous
case. We will omit the technical details that are the same as earlier,
but only address the differences in the treatment of Case 4 from that
of Case 3. When (4a) or (4b) applies, one can use (\ref{eq:cov non-concentric non-overlapping})
and (\ref{eq:cov non-concentric inclusion}) to verify that both $P_{x_{j}^{(n)},i}$
and $P_{x_{j}^{(n)},i+1}$ are independent of $P_{x_{k}^{(n)},l^{\prime}}$
for all $l^{\prime}\geq i+2$, and $P_{x_{k}^{(n)},i+1}$ is independent
of $P_{x_{j}^{(n)},l}$ for all $l\geq i+2$. Thus, $\mathcal{W}\left(\Phi_{x_{j}^{(n)},n}\bigcap\Phi_{x_{k}^{(n)},n}\right)$
is no greater than 
\begin{equation}
\mathcal{W}\left(\bigcap_{l=i+2}^{n}P_{x_{j}^{(n)},l}\right)\mathcal{W}\left(\bigcap_{l^{\prime}=i+2}^{n}P_{x_{k}^{(n)},l^{\prime}}\right)\cdot\mathcal{W}\left(P_{x_{j}^{(n)},i}\cap P_{x_{j}^{(n)},i+1}\cap P_{x_{k}^{(n)},i+1}\right).\label{eq:splitting prob in case 4}
\end{equation}

\begin{lem}
Under the hypothesis (4a) or (4b), there exists a constant $C_{\nu,\epsilon}>0$
such that for all $i\geq0$, 
\begin{equation}
\frac{\mathcal{W}\left(P_{x_{j}^{(n)},i}\cap P_{x_{j}^{(n)},i+1}\cap P_{x_{k}^{(n)},i+1}\right)}{\mathcal{W}\left(P_{x_{j}^{(n)},i}\right)\mathcal{W}\left(P_{x_{j}^{(n)},i+1}\right)\mathcal{W}\left(P_{x_{k}^{(n)},i+1}\right)}\leq\exp\left(C_{\nu,\epsilon}\sqrt{-\ln r_{i}}\right).\label{eq:estimate of prob of i,i+1.i+1}
\end{equation}
\end{lem}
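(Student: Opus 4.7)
\smallskip

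\noindent\emph{Proof plan.} The strategy mirrors that of Lemma \ref{lem:estimate of prob of i+1,i+2,i+1,i+2}, but with only three Gaussian variables to track instead of four. Set
\[
X_{i}:=\Delta\bar{\theta}_{i}\left(x_{j}^{(n)}\right),\quad X_{i+1}:=\Delta\bar{\theta}_{i+1}\left(x_{j}^{(n)}\right),\quad Y_{i+1}:=\Delta\bar{\theta}_{i+1}\left(x_{k}^{(n)}\right).
\]
Since the concentric process $\{\bar{\theta}_{t}(x_{j}^{(n)}):t\in(0,1]\}$ has independent increments, $X_{i}$ and $X_{i+1}$ are already independent, so only the two off-diagonal entries $\mathrm{Cov}(X_{i},Y_{i+1})$ and $\mathrm{Cov}(X_{i+1},Y_{i+1})$ in the $3\times 3$ covariance matrix need to be controlled.

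First I would compute each of these covariances as a signed sum of four terms of the form $\mathrm{Cov}(\bar{\theta}_{r_{l}}(x_{j}^{(n)}),\bar{\theta}_{r_{l'}}(x_{k}^{(n)}))$ for $l,l'\in\{i-1,i,i+1\}$ or $\{i,i+1\}$, and invoke (\ref{eq:cov non-concentric non-overlapping}), (\ref{eq:cov non-concentric inclusion}), and the general formula (\ref{eq:covariance for (1-Delta)^s-1}). Under hypothesis (4a) or (4b) the separation $|x_{j}^{(n)}-x_{k}^{(n)}|>r_{i+1}^{1-\epsilon}$ exceeds $2r_{i+1}$ for large $i$, so whichever pairs concern radius $r_{i+1}$ lie in the disjoint regime and contribute $C_{disj}(|x_{j}^{(n)}-x_{k}^{(n)}|)=\mathcal{O}(r_{i+1}^{(1-\epsilon)(2-\nu)})$, while pairs at radius $r_{i-1}$ or $r_{i}$ lie in the inclusion or general regime but remain bounded. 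By Cauchy--Schwarz applied to the Gaussian family, this yields
\[
\bigl|\mathrm{Cov}(X_{i},Y_{i+1})\bigr|,\;\bigl|\mathrm{Cov}(X_{i+1},Y_{i+1})\bigr|=\mathcal{O}_{\nu,\epsilon}\bigl(\sqrt{G(r_{i})G(r_{i+1})}\bigr).
\]

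Next I would condition sequentially, exactly as in the proof of Lemma \ref{lem:estimate of prob of i+1,i+2,i+1,i+2}. Conditioning on $Y_{i+1}=y$ with $|y-\sqrt{2\nu\gamma}\Delta D_{i+1}|\leq\sqrt{\Delta G_{i+1}}$ transforms $(X_{i},X_{i+1})$ into a Gaussian pair $(X_{i}',X_{i+1}')$ whose conditional means are
\[
m_{i}=\frac{\mathrm{Cov}(X_{i},Y_{i+1})}{\Delta G_{i+1}}\,y,\qquad m_{i+1}=\frac{\mathrm{Cov}(X_{i+1},Y_{i+1})}{\Delta G_{i+1}}\,y,
\]
and conditional variances $\Delta G_{i}+o(1)$, $\Delta G_{i+1}+o(1)$, with a new cross-covariance $-\mathrm{Cov}(X_{i},Y_{i+1})\mathrm{Cov}(X_{i+1},Y_{i+1})/\Delta G_{i+1}$. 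By the covariance estimate above together with (\ref{eq:condition on r_n}) and (\ref{eq:condition on r_n (2)}), $m_{i}$ and $m_{i+1}$ are $o(1)$ and the cross-covariance is $o(\sqrt{\Delta G_{i}\Delta G_{i+1}})$, uniformly in $y$. A further conditioning on $X_{i+1}'$ then reduces the problem to a single Gaussian probability for $X_{i}$, which the standard Gaussian estimate bounds by $\exp(\nu\gamma\ln r_{i}+\mathcal{O}_{\nu,\epsilon}(\sqrt{-\ln r_{i}}))$. Multiplying through the three conditional layers and comparing with (\ref{eq: estimate for P_x,n}) gives (\ref{eq:estimate of prob of i,i+1.i+1}).

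The main obstacle is, as in Lemma \ref{lem:estimate of prob of i+1,i+2,i+1,i+2}, verifying that all ``$o(1)$'' and ``$\mathcal{O}(\sqrt{-\ln r_{i}})$'' error terms remain uniform in the conditioning variables and that the $\epsilon$-dependence of the constants (which comes precisely from the bound $|x_{j}^{(n)}-x_{k}^{(n)}|>r_{i+1}^{1-\epsilon}$ used in evaluating $C_{disj}$) propagates cleanly through the chain of conditionings. Once these uniform bounds are in place, the desired constant $C_{\nu,\epsilon}$ is produced by absorbing all such errors into a single coefficient of $\sqrt{-\ln r_{i}}$.
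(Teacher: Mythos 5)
Your covariance estimate for $\mathrm{Cov}(X_{i+1},Y_{i+1})$ is incorrect, and this error hides the essential difficulty that makes Case~4 different from Case~3. Under hypothesis (4a), the separation can be as small as $r_{i+1}^{1-\epsilon}$, so the term $C_{disj}\bigl(|x_j^{(n)}-x_k^{(n)}|\bigr)=\mathcal{O}\bigl(r_{i+1}^{(1-\epsilon)(2-\nu)}\bigr)=\mathcal{O}\bigl(G^{1-\epsilon}(r_{i+1})\bigr)$ dominates; this is what the paper records as (\ref{eq:bound for i+1,i+1}), and it is strictly \emph{larger} than your claimed $\sqrt{G(r_i)G(r_{i+1})}$ (for $\epsilon<1/2$ one has $G^{1-\epsilon}(r_{i+1})\gg\sqrt{G(r_i)G(r_{i+1})}$ because $-\ln r_{i+1}\gg-\ln r_i$). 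Nor does your bound follow from Cauchy--Schwarz, which would only give $\mathcal{O}(G(r_{i+1}))$ for this covariance.

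The consequence is fatal to your conditioning chain: after conditioning on $Y_{i+1}=y$ with $|y-\sqrt{2\nu\gamma}\Delta D_{i+1}|\leq\sqrt{\Delta G_{i+1}}$, the conditional mean of $X_{i+1}$ is
\[
m_2=\frac{\mathrm{Cov}(X_{i+1},Y_{i+1})}{\Delta G_{i+1}}\,y=\mathcal{O}\bigl(\Delta D_{i+1}\,G^{-\epsilon}(r_{i+1})\bigr)=\mathcal{O}\bigl(G^{1/2-\epsilon}(r_{i+1})\sqrt{-\ln r_{i+1}}\bigr),
\]
which \emph{diverges} as $i\to\infty$ for $\epsilon<1/2$ rather than being $o(1)$ as you claim. (Even with your too-optimistic covariance bound, $m_2$ would still be $\mathcal{O}\bigl(\sqrt{G(r_i)(-\ln r_{i+1})}\bigr)\to\infty$.) Thus the probability that $X_{i+1}'$ lands in the target interval cannot be compared to $\mathcal{W}(P_{x_j^{(n)},i+1})$ by the "everything is $o(1)$" reasoning borrowed from Lemma~\ref{lem:estimate of prob of i+1,i+2,i+1,i+2}. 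The paper handles this by a new step: it writes the conditional Gaussian probability as an integral of the unconditional density times a ratio $E(w)=\frac{\sqrt{\Delta G_{i+1}}}{\sigma_3}\exp\bigl[-(w-m_3)^2/(2\sigma_3^2)+w^2/(2\Delta G_{i+1})\bigr]$ and shows $\sup_w|E(w)|\leq C_{\nu,\epsilon}$ over the relevant interval, using the precise orders of $m_3$ and $\sigma_3^2$ to see the large quadratic terms cancel. This density-ratio comparison is the crux of the lemma and is exactly the "step that is different from the proof of Lemma~\ref{lem:estimate of prob of i+1,i+2,i+1,i+2}" that the paper flags; your proposal omits it entirely, so the argument does not close.
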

\begin{proof}
Similarly as the proof of Lemma \ref{lem:estimate of prob of i+1,i+2,i+1,i+2},
one can prove (\ref{eq:estimate of prob of i,i+1.i+1}) by multiples
steps of conditioning. For simpler notation, we write 
\[
X_{i}:=\Delta\bar{\theta}_{i}\left(x_{j}^{(n)}\right),\, X_{i+1}:=\Delta\bar{\theta}_{i+1}\left(x_{j}^{(n)}\right),\mbox{ and }Y_{i+1}:=\Delta\bar{\theta}_{i+1}\left(x_{k}^{(n)}\right).
\]
When (4a) or (4b) applies, by (\ref{eq:cov non-concentric non-overlapping})
and (\ref{eq:cov non-concentric inclusion}), 
\begin{equation}
\mbox{Cov}\left(X_{i},Y_{i+1}\right)=\mathcal{O}\left(G\left(r_{i}\right)\right),\label{eq:bound for i,i+1}
\end{equation}
and 
\begin{equation}
\mbox{Cov}\left(Y_{i+1},X_{i+1}\right)=\mathcal{O}\left(G^{1-\epsilon}\left(r_{i+1}\right)\right).\label{eq:bound for i+1,i+1}
\end{equation}
We first condition on $Y_{i+1}=y$ where $\left|y-\sqrt{2\nu\gamma}\Delta D_{i+1}\right|\leq\sqrt{\Delta G_{i+1}}$.
Then the joint conditional distribution of $\left\{ X_{i},X_{i+1}\right\} $
given $Y_{i+1}=y$ is the same as that of $\left\{ X_{i}^{\prime},X_{i+1}^{\prime}\right\} $
where $X_{i}^{\prime}$ and $X_{i+1}^{\prime}$ have distributions
$N\left(m_{1},\sigma_{1}^{2}\right)$ and $N\left(m_{2},\sigma_{2}^{2}\right)$
respectively, where $m_{1}=o\left(1\right)$, $\sigma_{1}^{2}=\Delta G_{i}+o\left(1\right)$,
and 
\begin{equation}
m_{2}=\mathcal{O}\left(\Delta D_{i+1}\cdot G^{-\epsilon}\left(r_{i+1}\right)\right)\mbox{ and }\sigma_{2}^{2}=\Delta G_{i+1}\left[1+\mathcal{O}\left(G^{-2\epsilon}\left(r_{i+1}\right)\right)\right],\label{eq:estimate for m_2, sigma_2}
\end{equation}
and moreover,
\begin{equation}
\mbox{Cov}\left(X_{i}^{\prime},X_{i+1}^{\prime}\right)=\mathcal{O}\left(G\left(r_{i}\right)/G^{\epsilon}\left(r_{i+1}\right)\right).\label{eq:estimate for cov i,i+1}
\end{equation}
These estimates on $m_{1}$, $\sigma_{1}^{2}$, $m_{2}$, $\sigma_{2}^{2}$
and $\mbox{Cov}\left(X_{i}^{\prime},X_{i+1}^{\prime}\right)$ follow
from (\ref{eq:bound for i,i+1}) and (\ref{eq:bound for i+1,i+1})
and can be made uniform in $y$. Next, given $X_{i}^{\prime}=x$ where
$\left|x-\sqrt{2\nu\gamma}\Delta D_{i}\right|\leq\sqrt{\Delta G_{i}}$,
the conditional distribution of $X_{i+1}^{\prime}$ is the Gaussian
distribution $N\left(m_{3},\sigma_{3}^{2}\right)$ and, by (\ref{eq:estimate for cov i,i+1}),
$m_{3}$ and $\sigma_{3}^{2}$ follow the same estimates as $m_{2}$
and $\sigma_{2}^{2}$ respectively, i.e., the estimates in (\ref{eq:estimate for m_2, sigma_2}),
and these estimates are uniform in $x$ and $y$.

To proceed from here, we need to carry out a step that is different
from the proof of Lemma \ref{lem:estimate of prob of i+1,i+2,i+1,i+2}.
Specifically, we need to compare $\mathcal{W}\left(P_{x_{j}^{(n)},i+1}\right)$
and 
\[
N\left(m_{3},\sigma_{3}^{2}\right)\left(\left[\sqrt{2\nu\gamma}\Delta D_{i+1}-\sqrt{\Delta G_{i+1}},\sqrt{2\nu\gamma}\Delta D_{i+1}+\sqrt{\Delta G_{i+1}}\right]\right).
\]
To this end, we write the later as
\[
\begin{split} & \frac{1}{\sqrt{2\pi}}\int_{\sqrt{2\nu\gamma}\Delta D_{i+1}-\sqrt{\Delta G_{i+1}}}^{\sqrt{2\nu\gamma}\Delta D_{i+1}+\sqrt{\Delta G_{i+1}}}\frac{\exp\left[-\left(w-m_{3}\right)^{2}/\left(2\sigma_{3}^{2}\right)\right]}{\sigma_{3}}dw\\
=\; & \frac{1}{\sqrt{2\pi}}\int_{\sqrt{2\nu\gamma}\Delta D_{i+1}-\sqrt{\Delta G_{i+1}}}^{\sqrt{2\nu\gamma}\Delta D_{i+1}+\sqrt{\Delta G_{i+1}}}\,\frac{\exp\left[-w^{2}/\left(2\Delta G_{i+1}\right)\right]}{\sqrt{\Delta G_{i+1}}}\cdot E\left(w\right)dw
\end{split}
\]
 where 
\[
E\left(w\right):=\frac{\sqrt{\Delta G_{i+1}}}{\sigma_{3}}\exp\left[-\frac{\left(w-m_{3}\right)^{2}}{2\sigma_{3}^{2}}+\frac{w^{2}}{2\Delta G_{i+1}}\right].
\]
Notice that by the estimates in (\ref{eq:estimate for m_2, sigma_2})
which apply to $m_{3}$ and $\sigma_{3}^{2}$, there exists a constant
$C_{\nu,\epsilon}>0$ such that 
\[
\begin{split}\sup_{\left\{ w:\left|w-\sqrt{2\nu\gamma}\Delta D_{i+1}\right|\leq\sqrt{\Delta G_{i+1}}\right\} }\,\left|E\left(w\right)\right| & \leq C_{\nu,\epsilon}.\end{split}
\]
It follows from this observation that, given $X_{i}^{\prime}=x$,
the conditional probability of $X_{i+1}^{\prime}$ being in the desired
interval, i.e., $\left|X_{i+1}^{\prime}-\sqrt{2\nu\gamma}\Delta D_{i+1}\right|\leq\sqrt{\Delta G_{i+1}}$,
is bounded by $C_{\nu,\epsilon}\mathcal{W}\left(P_{x_{j}^{(n)},i+1}\right)$.
From this point, we backtrack the conditioning in the same way as
we did in the proof of Lemma \ref{lem:estimate of prob of i+1,i+2,i+1,i+2}
and arrive at 
\[
\mathcal{W}\left(P_{x_{j}^{(n)},i}\cap P_{x_{j}^{(n)},i+1}\cap P_{x_{k}^{(n)},i+1}\right)\leq e^{\nu\gamma\ln r_{i}+\mathcal{O}\left(\sqrt{-\ln r_{i}}\right)}\mathcal{W}\left(P_{x_{j}^{(n)},i+1}\right)\mathcal{W}\left(P_{x_{k}^{(n)},i+1}\right).
\]
By (\ref{eq: estimate for P_x,n}), (\ref{eq:estimate of prob of i,i+1.i+1})
follows immediately. 
\end{proof}
Based on (\ref{eq: estimate for Phi_x,n}), (\ref{eq:splitting prob in case 4})
and (\ref{eq:estimate of prob of i,i+1.i+1}), we have that, in Case
4, 
\[
\begin{split}\frac{\mathcal{W}\left(\Phi_{x_{j}^{(n)},n}\bigcap\Phi_{x_{k}^{(n)},n}\right)}{\mathcal{W}\left(\Phi_{x_{j}^{(n)},n}\right)\mathcal{W}\left(\Phi_{x_{k}^{(n)},n}\right)} & \leq\frac{\mathcal{W}\left(P_{x_{j}^{(n)},i}\cap P_{x_{j}^{(n)},i+1}\cap P_{x_{k}^{(n)},i+1}\right)}{\mathcal{W}\left(\Phi_{x_{j}^{(n)},i+1}\right)\mathcal{W}\left(\Phi_{x_{k}^{(n)},i+1}\right)}\\
 & \leq\frac{\exp\left(C_{\nu,\epsilon}\sqrt{-\ln r_{i}}\right)}{\mathcal{W}\left(\Phi_{x_{j}^{(n)},i-1}\right)\mathcal{W}\left(\Phi_{x_{k}^{(n)},i}\right)}\\
 & \leq\exp\left[-\nu\gamma\ln r_{i}+o\left(-\ln r_{i}\right)\right].
\end{split}
\]
With $x_{j}^{(n)}$ fixed, the number of $x_{k}^{(n)}$'s that satisfy
either (4a) or (4b) is of the order of $\left(2r_{i}\right)^{\nu}/r_{n}^{\nu}$.
Hence, the contribution to (\ref{eq:2nd moment rewritten in terms of L})
under this case is
\[
\begin{split} & \sum_{i=0}^{n-1}\exp\left[\nu\left(1-\gamma\right)\ln r_{i}+o\left(-\ln r_{i}\right)\right]\end{split}
\]
which is bounded in $n$ since $\gamma<1$.

Summarizing our findings in all the cases above, we conclude that
\[
\sup_{n\geq1}\,\mathbb{E}^{\mathcal{W}}\left[\left(\mu_{n,\theta}\left(\overline{S\left(O,1\right)}\right)\right)^{2}\right]<\infty.
\]

\subsection*{Step 4: Study the $\alpha-$energy of $\mu_{n,\theta}$.}

In this subsection we study the $\alpha-$energy, $\alpha>0$, of
the measure $\mu_{n,\theta}$ introduced previously. Namely, with
$\alpha>0$ fixed, we consider, for every $\theta\in\Theta$ and every
$n\geq1$, 
\[
I_{\alpha}\left(\mu_{n,\theta}\right):=\int_{\overline{S\left(O,1\right)}}\int_{\overline{S\left(O,1\right)}}\left|y-w\right|^{-\alpha}\mu_{n,\theta}\left(dy\right)\mu_{n,\theta}\left(dw\right).
\]
By the definition of $\mu_{n,\theta}$ (\ref{eq:def of mu_n_theta}),
$\mathbb{E}^{\mathcal{W}}\left[I_{\alpha}\left(\mu_{n,\theta}\right)\right]$
is equal to 
\begin{equation}
\frac{1}{K_{n}^{2}}\sum_{j,k=1}^{K_{n}}\frac{\mathcal{W}\left(\Phi_{x_{j}^{(n)},n}\bigcap\Phi_{x_{k}^{(n)},n}\right)}{\mathcal{W}\left(\Phi_{x_{j}^{(n)},n}\right)\mathcal{W}\left(\Phi_{x_{k}^{(n)},n}\right)}\frac{\int_{\overline{S\left(x_{j}^{(n)},r_{n}\right)}}\int_{\overline{S\left(x_{k}^{(n)},r_{n}\right)}}\left|y-w\right|^{-\alpha}dydw}{\mbox{vol}\left(S\left(x_{j}^{(n)},r_{n}\right)\right)\mbox{vol}\left(S\left(x_{k}^{(n)},r_{n}\right)\right)}.\label{eq:expection of energy}
\end{equation}
In this subsection we will show that, if $\alpha<\nu\left(1-\gamma\right)$,
then 
\[
\sup_{n\geq1}\,\mathbb{E}^{\mathcal{W}}\left[I_{\alpha}\left(\mu_{n,\theta}\right)\right]<\infty.
\]
For simplicity, we write 
\[
I\left(x_{j}^{(n)},\, x_{k}^{(n)}\right):=\frac{\int_{\overline{S\left(x_{j}^{(n)},r_{n}\right)}}\int_{\overline{S\left(x_{k}^{(n)},r_{n}\right)}}\left|y-w\right|^{-\alpha}dydw}{\mbox{vol}\left(S\left(x_{j}^{(n)},r_{n}\right)\right)\mbox{vol}\left(S\left(x_{k}^{(n)},r_{n}\right)\right)}.
\]

When $j=k$, so long as $\alpha<\nu$, $I\left(x_{j}^{(n)},\, x_{k}^{(n)}\right)=C_{\nu}\cdot r_{n}^{-\alpha}$
for some dimensional constant $C_{\nu}>0$. Therefore, the sum over
the diagonal terms in (\ref{eq:expection of energy}) is 
\[
\begin{split}\frac{1}{K_{n}^{2}}\,\sum_{j=1}^{K_{n}}\frac{C_{\nu}\cdot r_{n}^{-\alpha}}{\mathcal{W}\left(\Phi_{x_{j}^{(n)},n}\right)} & \leq C_{\nu}\cdot\exp\left\{ \left[\nu\left(1-\gamma\right)-\alpha\right]\ln r_{n}+o\left(-\ln r_{n}\right)\right\} \end{split}
\]
which tends to zero as $n\rightarrow\infty$ whenever $\alpha<\nu\left(1-\gamma\right)$.
So it is sufficient to treat the sum over the off-diagonal terms in
(\ref{eq:expection of energy}). To this end, we follow a similar
approach as the one adopted in the previous step. Again, assume that
$j\neq k$, let $i\in\mathbb{N}$, $0\leq i\leq n-1$, be the unique
integer such that 
\[
2r_{i+1}\leq\left|x_{j}^{(n)}-x_{k}^{(n)}\right|<2r_{i},\quad(\dagger)
\]
and we rewrite the sum over the off-diagonal terms in (\ref{eq:expection of energy})
as 
\begin{equation}
\frac{1}{K_{n}^{2}}\sum_{j=1}^{K_{n}}\sum_{i=0}^{n-1}\sum_{\left\{ k:\,(\dagger)\mbox{ holds with }i\right\} }\frac{\mathcal{W}\left(\Phi_{x_{j}^{(n)},n}\bigcap\Phi_{x_{k}^{(n)},n}\right)}{\mathcal{W}\left(\Phi_{x_{j}^{(n)},n}\right)\mathcal{W}\left(\Phi_{x_{k}^{(n)},n}\right)}\cdot I\left(x_{j}^{(n)},\, x_{k}^{(n)}\right).\label{eq:energy decomposed in i}
\end{equation}
Let $\alpha\in\left(0,\nu\left(1-\gamma\right)\right)$ be fixed.
We investigate the sum in (\ref{eq:energy decomposed in i}) according
to the four cases presented in the previous step. Same as earlier,
without loss of generality, we can assume that $i$ is sufficiently
large.\\

\noindent \emph{Case 1.} Assume that for some $\epsilon\in\left(0,1-\gamma-\frac{\alpha}{\nu}\right)$,
\[
2r_{i+1}\leq\left|x_{j}^{(n)}-x_{k}^{(n)}\right|<r_{i+1}^{1-\varepsilon}.
\]
We have found out in Case 1 previously (following the same arguments
with a possibly smaller $\epsilon$) that 
\[
\begin{split}\frac{\mathcal{W}\left(\Phi_{x_{j}^{(n)},n}\bigcap\Phi_{x_{k}^{(n)},n}\right)}{\mathcal{W}\left(\Phi_{x_{j}^{(n)},n}\right)\mathcal{W}\left(\Phi_{x_{k}^{(n)},n}\right)} & \leq\exp\left[-\nu\gamma\ln r_{i+1}+o\left(-\ln r_{i+1}\right)\right]\end{split}
,
\]
and with $x_{j}^{(n)}$ fixed, the number of $x_{k}^{(n)}$'s that
satisfy the criterion of Case 1 is of the order of $\left(r_{i+1}^{1-\epsilon}/r_{n}\right)^{\nu}$.
Besides, it is easy to see that there exists $C_{\nu}>0$ such that
$I\left(x_{j}^{(n)},\, x_{k}^{(n)}\right)\leq C_{\nu}\cdot r_{i+1}^{-\alpha}$.
So the contribution to (\ref{eq:energy decomposed in i}) under this
case is
\[
\begin{split} & \sum_{i=0}^{n-1}\exp\left\{ \left[\nu\left(1-\epsilon-\gamma\right)-\alpha\right]\ln r_{i+1}+o\left(-\ln r_{i+1}\right)\right\} \end{split}
\]
which is bounded in $n$ since $\epsilon<1-\gamma-\frac{\alpha}{\nu}$.\\

\noindent \emph{Case 2}, \emph{Case 3} and \emph{Case (4b)}. Under
any of the conditions, as imposed in the previous step, of these three
cases, we have that $I\left(x_{j}^{(n)},\, x_{k}^{(n)}\right)\leq C_{\nu}\cdot r_{i}^{-\alpha}$.
Combining this with the findings from the previous step, i.e., the
estimate on 
\[
\begin{split}\frac{\mathcal{W}\left(\Phi_{x_{j}^{(n)},n}\bigcap\Phi_{x_{k}^{(n)},n}\right)}{\mathcal{W}\left(\Phi_{x_{j}^{(n)},n}\right)\mathcal{W}\left(\Phi_{x_{k}^{(n)},n}\right)}\end{split}
\]
and the number of qualifying $k$'s for any fixed $j$, one can easily
confirm that the contribution to (\ref{eq:energy decomposed in i}),
in Case 2 or Case 3 or Case (4b), is bounded in $n$.\\

\noindent \emph{Case (4a)}. However, in the case of (4a), the arguments
above will not work, since $r_{i}^{-\alpha}$ above would be replaced
by $r_{i+1}^{-\alpha\left(1-\epsilon\right)}$. We need to apply a
finer treatment by decomposing the interval $(r_{i+1}^{1-\epsilon},\, r_{i}-r_{i+1}]$
into a union of disjoint intervals. To be specific, let $Z$ be the
smallest integer such that 
\[
\left(r_{i+1}/r_{i}\right)^{\left(1-\gamma\right)^{Z}}\geq1-\frac{r_{i+1}}{r_{i}},
\]
for which to happen it is sufficient to make
\[
\left(1-\gamma\right)^{Z}\leq\frac{\ln\left(1-r_{i+1}/r_{i}\right)}{\ln\left(r_{i+1}/r_{i}\right)},
\]
so $Z$ should be taken as 
\[
\frac{1}{\ln\left(1-\gamma\right)}\ln\left[\frac{\ln\left(1-r_{i+1}/r_{i}\right)}{\ln\left(r_{i+1}/r_{i}\right)}\right]+1=\mathcal{O}\left(-\ln r_{i+1}\right).
\]
Define a sequence of positive numbers $\left\{ R_{m}:\, m=0,\cdots,Z\right\} $
by $R_{0}:=r_{i+1}^{1-\epsilon}$, and 
\[
R_{m}:=r_{i+1}^{\left(1-\gamma\right)^{m}}\cdot r_{i}^{1-\left(1-\gamma\right)^{m}}\mbox{ for }m=1,\cdots,Z.
\]
Clearly, $R_{m}<R_{m+1}$ and 
\[
R_{Z}=r_{i}\cdot\left(r_{i+1}/r_{i}\right)^{\left(1-\gamma\right)^{Z}}\geq r_{i}-r_{i+1}.
\]
Denote $U_{m}:=(R_{m},\, R_{m+1}]$ for $m=0,1,\cdots,Z-1$. Clearly,
\[
(r_{i+1}^{1-\epsilon},\, r_{i}-r_{i+1}]\subseteq\bigcup_{m=0}^{Z-1}U_{m}.
\]
For each $m=0,1,\cdots,Z$, if $\left|x_{j}^{(n)}-x_{k}^{(n)}\right|\in U_{m}$,
then $I\left(x_{j}^{(n)},\, x_{k}^{(n)}\right)\leq C_{\nu}R_{m}^{-\alpha}$.
Recall that, in Case 4,
\[
\frac{\mathcal{W}\left(\Phi_{x_{j}^{(n)},n}\bigcap\Phi_{x_{k}^{(n)},n}\right)}{\mathcal{W}\left(\Phi_{x_{j}^{(n)},n}\right)\mathcal{W}\left(\Phi_{x_{k}^{(n)},n}\right)}\leq\exp\left[-\nu\gamma\ln r_{i}+o\left(-\ln r_{i}\right)\right].
\]
Meanwhile, when $x_{j}^{(n)}$ is fixed, the number of $x_{k}^{(n)}$'s
such that $\left|x_{j}^{(n)}-x_{k}^{(n)}\right|\in U_{m}$ is no greater
than $R_{m+1}^{\nu}/r_{n}^{\nu}$. We will need the following estimate:
\[
\begin{split} & \exp\left[-\nu\gamma\ln r_{i}+o\left(-\ln r_{i}\right)\right]\cdot R_{m}^{-\alpha}\cdot R_{m+1}^{\nu}\\
\leq\; & \exp\left\{ \left[-\nu\gamma-\alpha+\alpha\left(1-\gamma\right)^{m}+\nu-\nu\left(1-\gamma\right)^{m+1}\right]\ln r_{i}+o\left(-\ln r_{i}\right)\right\} \\
 & \qquad\qquad\qquad\qquad\qquad\qquad\cdot\exp\left\{ \left[-\alpha\left(1-\gamma\right)^{m}+\nu\left(1-\gamma\right)^{m+1}\right]\ln r_{i+1}\right\} \\
=\; & \exp\left\{ \left[\nu\left(1-\gamma\right)-\alpha\right]\ln r_{i}+o\left(-\ln r_{i}\right)+\left[\nu\left(1-\gamma\right)-\alpha\right]\left(1-\gamma\right)^{m}\ln\left(r_{i+1}/r_{i}\right)\right\} \\
\leq\; & \exp\left\{ \left[\nu\left(1-\gamma\right)-\alpha\right]\ln r_{i}+o\left(-\ln r_{i}\right)\right\} .
\end{split}
\]
Hence, under the condition (4a), the contribution to (\ref{eq:energy decomposed in i})
is
\[
\begin{split} & \sum_{i=0}^{n-1}\,\sum_{m=0}^{Z-1}\,\exp\left[-\nu\gamma\ln r_{i}+o\left(-\ln r_{i}\right)\right]R_{m}^{-\alpha}\cdot R_{m+1}^{\nu}\\
\leq\; & \sum_{i=0}^{n-1}Z\cdot\exp\left\{ \left[\nu\left(1-\gamma\right)-\alpha\right]\ln r_{i}+o\left(-\ln r_{i}\right)\right\} \\
\leq\; & \sum_{i=0}^{n-1}\exp\left\{ \left[\nu\left(1-\gamma\right)-\alpha\right]\ln r_{i}+o\left(-\ln r_{i}\right)+\mathcal{O}\left(\ln\left(-\ln r_{i+1}\right)\right)\right\} \\
=\; & \sum_{i=0}^{n-1}\exp\left\{ \left[\nu\left(1-\gamma\right)-\alpha\right]\ln r_{i}+o\left(-\ln r_{i}\right)\right\} 
\end{split}
\]
which is bounded in $n$ since $\alpha<\nu\left(1-\gamma\right)$.
The last line is due to (\ref{eq:condition on r_n (2)}).

\subsection*{Step 5: Establish the lower bound.}

From this point on we follow a similar line of arguments as in \cite{HMP}
to complete the proof of the lower bound. Here we outline the key
steps for completeness. Fix any $\alpha\in\left(0,\nu\left(1-\gamma\right)\right)$.
Denote
\[
A_{1}:=\sup_{n\geq1}\,\mathbb{E}^{\mathcal{W}}\left[\left(\mu_{n,\theta}\left(\overline{S\left(O,1\right)}\right)\right)^{2}\right]\mbox{ and }A_{2}:=\sup_{n\geq1}\,\mathbb{E}^{\mathcal{W}}\left[I_{\alpha}\left(\mu_{n,\theta}\right)\right].
\]
For constants $c_{1}>1$, $c_{2}>0$, define the measurable subset
of $\Theta$ 
\[
\Lambda_{n}^{\alpha:}:=\left\{ \theta\in\Theta:\,\frac{1}{c_{1}}\leq\mu_{n,\theta}\left(\overline{S\left(O,1\right)}\right)\leq c_{1},\, I_{\alpha}^{\theta}\left(\mu_{n,\theta}\right)\leq c_{2}\right\} 
\]
and $\Lambda^{\alpha}:=\limsup_{n\rightarrow\infty}\Lambda_{n}^{\alpha}$.
Clearly, 
\[
\sup_{n\geq1}\,\mathcal{W}\left(I_{\alpha}^{\theta}\left(\mu_{n,\theta}\right)>c_{2}\right)\leq\frac{A_{2}}{c_{2}}\mbox{ and }\sup_{n\geq1}\,\mathcal{W}\left(\mu_{n,\theta}\left(\overline{S\left(O,1\right)}\right)>c_{1}\right)\leq\frac{1}{c_{1}}.
\]
Moreover, by (\ref{eq:1st moment of total mass under mu_theta}) and
the Paley-Zygmund inequality, 
\[
\begin{split}\sup_{n\geq1}\,\mathcal{W}\left(\mu_{n,\theta}\left(\overline{S\left(O,1\right)}\right)<\frac{1}{c_{1}}\right) & \leq1-\frac{\left(1-\frac{1}{c_{1}}\right)^{2}}{A_{1}}\end{split}
.
\]
As a consequence, by choosing $c_{1}$ and $c_{2}$ sufficiently large,
we can make 
\[
\mathcal{W}\left(\Lambda_{n}^{\alpha}\right)>\frac{\left(1-\frac{1}{c_{1}}\right)^{2}}{A_{1}}-\frac{1}{c_{1}}-\frac{A_{2}}{c_{2}}>\frac{1}{2A_{1}}
\]
for every $n\geq1$, and hence $\mathcal{W}\left(\Lambda^{\alpha}\right)\geq\frac{1}{2A_{1}}$. 

For every $\theta\in\Lambda^{\alpha}$, there exists a subsequence
$\left\{ n_{k}:k\geq0\right\} $ such that 
\[
\frac{1}{c_{1}}\leq\mu_{n_{k},\theta}\left(\overline{S\left(O,1\right)}\right)\leq c_{1},\, I_{\alpha}\left(\mu_{n_{k},\theta}\right)\leq c_{2}\mbox{ for all }k\geq0.
\]
Because $I_{\text{\ensuremath{\alpha}}}$, as a mapping from the space
of finite measures on $\overline{S\left(O,1\right)}$ to $\left[0,\infty\right]$.
is lower semi-continuous with respect to the weak topology, 
\[
\mathcal{M}:=\left\{ \mu\mbox{ Borel measure on }\overline{S\left(O,1\right)}:\,\frac{1}{c_{1}}\leq\mu\left(\overline{S\left(O,1\right)}\right)\leq c_{1},\, I_{\alpha}\left(\mu\right)\leq c_{2}\right\} 
\]
is compact, and hence for every $\theta\in\Lambda^{\alpha}$, there
exists a Borel measure $\mu_{\theta}$ on $\overline{S\left(O,1\right)}$
such that $\mu_{n_{k},\theta}$ weakly converges to $\mu_{\theta}$
along a subsequence of $\left\{ n_{k}:k\geq0\right\} $. Then, 
\[
\frac{1}{c_{1}}\leq\mu_{\theta}\left(\overline{S\left(O,1\right)}\right)\leq c_{1},\, I_{\alpha}\left(\mu_{\theta}\right)\leq c_{2}.
\]
Moreover, $\Sigma_{\theta}^{\gamma}$, as defined in (\ref{eq:subset of ST}),
is a closed subset of $\overline{S\left(O,1\right)}$. Then the weak
convergence relation implies that $\mu_{\theta}\left(\Sigma_{\theta}^{\gamma}\right)\geq\frac{1}{c_{1}}$.
Therefore, if $\mathcal{C}^{\alpha}\left(\Sigma_{\theta}^{\gamma}\right)$
is the $\alpha-$capacity of the set $\Sigma_{\theta}^{\gamma}$,
i.e., 
\[
\mathcal{C}^{\alpha}\left(\Sigma_{\theta}^{\gamma}\right):=\sup\left\{ \left(\iint_{\Sigma_{\theta}^{\gamma}\times\Sigma_{\theta}^{\gamma}}\frac{\mu\times\mu\left(dydw\right)}{\left|y-w\right|^{\alpha}}\right)^{-1}:\,\mu\mbox{ is a probability measure on }\Sigma_{\theta}^{\gamma}\right\} ,
\]
then $\mathcal{C}^{\alpha}\left(\Sigma_{\theta}^{\gamma}\right)>0$,
and hence, by Frostman's lemma, $\dim_{\mathcal{H}}\left(\Sigma_{\theta}^{\gamma}\right)\geq\alpha$
which implies that $\dim_{\mathcal{H}}\left(ST_{\theta}^{\gamma}\right)\geq\alpha$.
Thus, we have established that
\[
\mathcal{W}\left(\dim_{\mathcal{H}}\left(ST_{\theta}^{\gamma}\right)\geq\alpha\right)\geq\mathcal{W}\left(\Lambda^{\alpha}\right)\geq\frac{1}{2A_{1}}.
\]

Finally, we recall from (\ref{eq:H_basis expansion}) that for $\mathcal{W}-$
a.e. $\theta\in\Theta$, 
\[
\theta=\sum_{n\ge1}\mathcal{I}\left(h_{n}\right)\left(\theta\right)\cdot h_{n}
\]
where $\left\{ h_{n}:n\geq1\right\} $ is an orthonormal basis of
the Cameron-Martin space $H$ and $\left\{ \mathcal{I}\left(h_{n}\right):n\geq1\right\} $
under $\mathcal{W}$ forms a sequence of i.i.d. standard Gaussian
random variables. By a simple application of the Hewitt-Savage zero-one
law, we have that 
\[
\mathcal{W}\left(\dim_{\mathcal{H}}\left(ST_{\theta}^{\gamma}\right)\geq\alpha\right)=1.
\]
Since $\alpha$ is arbitrary in $\left(0,\nu\left(1-\gamma\right)\right)$,
we get the desired lower bound, i.e., 
\[
\dim_{\mathcal{H}}\left(ST_{\theta}^{\gamma}\right)\geq\nu\left(1-\gamma\right)\mbox{ a.s..}
\]
This completes the proof of Theorem \ref{thm:thick point along sequence}.
Since $ST_{\theta}^{\gamma}$ is a subset of $T_{\theta}^{\gamma}$,
we have also established (\ref{eq:lower bound}) and hence Theorem
\ref{thm:hausdorff dimension of thick point set}.

\newpage{}

\bibliographystyle{plain}
\bibliography{mybib}

\end{document}